\documentclass[12pt]{article}
\usepackage{srcltx}
\usepackage{amssymb,amsmath,amsfonts,amsthm}
\usepackage{cite}
\sloppy

\setlength{\headheight}{0mm} \setlength{\headsep}{0mm}
\setlength{\topmargin}{0mm} \setlength{\oddsidemargin}{0mm}
\setlength{\textwidth}{165mm} \setlength{\textheight}{240mm}

\newcounter{parag}

\newtheorem{lem}{Lemma}
\newtheorem{theorem}{Theorem}

\newtheorem{cor}{Corollary}
\newtheorem{prop}{Proposition}

\begin{document}

\begin{center}
{\bf \Large On Thompson's conjecture for finite simple groups}

{\bf I. B. Gorshkov}
\medskip
\footnote{The work was supported by
RFBR 18-31-00257.}

\end{center}
{\it Abstract: Let $G$ be a finite group, $N(G)$ be the set of conjugacy classes of the group $G$. In the present paper it is proved $G\simeq L$ if $N(G)=N(L)$, where $G$ is a finite group with trivial center and $L$ is a finite simple group.
\smallskip

Keywords: finite group, simple group, group of Lie type, conjugacy classes, Thompson conjecture. \smallskip
}

\section*{Introduction}

Consider a finite group $G$. For $g\in G$, denote by $g^G$ the conjugacy class of $G$ containing $g$, and by $|g^G|$ the size of $g^G$. The centralizer of $g$ in $G$ is denoted by $C_G(g)$. Put $N(G)=\{n\  |\  \exists g\in G$ such that $|g^G|=n\}\setminus\{1\}$. In
1987 Thompson posted the following conjecture concerning $N(G)$.
\medskip

\textbf{Thompson's Conjecture (see \cite{Kour}, Question 12.38)}. {\it If $L$ is a finite
simple non-abelian group, $G$ is a finite group with trivial
center, and $N(G)=N(L)$, then $G\simeq L$.}

\medskip

A more general question can also be formulated. When the property of equality sets of conjugacy classes sizes of two groups with a trivial center implies an isomorphism.

We say that the group $L$ is recognizable by the set of conjugacy classes size among finite groups with a trivial center (briefly recognizable) if the equality $N(L)=N(G)$, where $G$ is a group with trivial center, implies an isomorphism $L\simeq G$. Obviously, any group having a non-trivial center is not recognizable. However, it is easy to show that $S_3$ is recognizable. Thus, the condition of solvability is not a necessary condition for recognizability. As an example of an non-recognizeble group, we can take a Frobenius group of order $18$. There exists two non-isomorphic Frobenius groups of order $18$ and they have the same sets of conjugacy classes sizes. The question of the existence of the group $L$ such that there exists an infinite set of groups $\{G_i, i \in\mathbb{N} \} $ with trivial center such that $N(L)=N(G_i)$, where $i \in \mathbb{N}$, is open. The set $N(L)$ is closely connected with the order of the group $L$, and sometimes precisely determines it, which essentially limits the possibilities for constructing an infinite series of groups with a trivial center and the same set of conjugacy classes size.

We denote by $\omega(G)$ and $\pi(G)$ the set of elements orders in $G$ and the set of all prime divisors of orders of $G$ respectively. The set $\omega(G)$ defines a prime graph $GK(G)$, whose vertex set is $\pi(G)$ and two distinct primes $p, q\in\pi(G)$ are adjacent if $pq\in\omega(G)$. The greatest power of a prime $p$ dividing the natural number $n$ will be denoted by $n_p$.
The number $p^n$ such that there exists $\alpha \in N(G)$ is a multiple of $p^n$ and there is no $\beta \in N(G)$ that is a multiple of $p^{n + 1}$ is denoted by $|G||_p $. If $\pi\subseteq\pi(G)$ then $|G||_{\pi}=\prod_{p\in \pi}|G||_p $. To shorten the notation, we introduce the following notation $|G||=|G||_{\pi (G)}$. Note that the definitions of the number $|G||$ are correct and $(|G||)_p=|G||_p$. We have $|G||_p$ divides $|G|_p $ for any $ p\in\pi(G)$.

Take a group $L$ with trivial center and disconnected prime graph $GK(L)$. In \cite{Ch96} it is proved that if $N(L)=N(G)$ for any group $G$ with trivial center then $|G|=|G||=|L|$. However, in the general case the equality $|L|=|L||$ may not hold. An example of a group $G$ with a trivial center such that $|G|>|G||$ is constructed in \cite {camin11}.

The question of recognizability is most interesting for simple groups. One of the first groups for which the proof of recognizability was the sporadic group whose prime graph has more than two connected components, see \cite{Chen92}. In \cite{Shi}, using previous results, it is proved that any sporadic group is recognizable. In \cite{Shi2} it is proved that almost sporadic groups are also recognizable.
In \cite{Wil} end \cite{Kon} obtained the classification of finite simple groups with disconnected prime graph. Using this deep result Chen \cite{Ch96,Ch99} established the Thompson's conjecture for all finite simple groups with prime graph have more then two connected components. In the last 10 years, the recognition of a large number of simple groups. However, the results were of a partial nature. Until 2009, the prime graph of any group for which the recognition was proven was disconnected. In \cite {VasT} was shown the recognition of groups with a connected prime graph, in particular $Alt_ {10}$ and $L_4 (4)$ are recognizable. In \cite {Ah11}, the proof of the validity of the Thompson's conjecture for all simple Lie groups of type $A_n (q)$ is completed.
In article \cite{Xu14} showed that Thompson's conjecture holds for finite simple exceptional groups of type $E_7(q)$. In \cite{GorandCo} showed that Thompson's conjecture holds for $F_4(q)$ for odd $q, E_6(q), ^2E_6(q)$. As corollary of thats resaults we obtain that Thompson's conjecture has been proved valid for all exceptional group of lie type. In seryes articles \cite{AD, VasT, Gor0, GorA, GorA2, GorA3, GorA4, Iran, Kh1, Kh2} it was proved that Thompson's conjecture holds for all simple alternating groups. Recognizability of simple groups $^2A_n(q)$ is proved in \cite{Ah17}. In \cite {Ah16} and \cite{Ah12}, the recognizability of the groups $B_n(q)$ and $C_n(q)$ is proved. The prime graph$\ ^2B_2$ has $4$ connected components and recognition of that's groups was proved in \cite{Ch96}. In \cite{Darafsheh}, the recognizability of the groups $D_{p + 1}(2)$ and $D_{p + 1}(3)$ is proved, where $ p $ is a prime. In \cite{Ah132}, the Thompson's conjecture was proved for groups of type $D_n(q)$ where $n \not \in \{4, 8\}$. The validity of the conjecture for the groups $^2D_n(q)$ was proved in \cite{Ah13}. The graph of prime numbers of the groups $^3D_4(q) $ is disconnected, the validity of the conjecture for these groups was proved in \cite{Ch01}. Thus, to complete the proof of the validity of Thompson's conjecture, it is necessary to prove the recognizability of the groups $D_4(q)$ and $D_8(q)$ where $q>3$.

\begin{theorem}
The groups $D_4(q)$ and $D_8(q) $ are recognizable.
\end{theorem}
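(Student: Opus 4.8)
Fix $L\in\{D_4(q),D_8(q)\}$ with $q>3$, write $n$ for the rank ($n\in\{4,8\}$), and let $G$ be a finite group with $Z(G)=1$ and $N(G)=N(L)$. Since the cases $n\notin\{4,8\}$ are already settled in \cite{Ah132} by disconnected-prime-graph techniques that are not directly available here (and for $n=4$ the triality automorphism enlarges $\Out(L)$), the plan is first to recover $|G|$ purely arithmetically. I would use the known centralizer orders in $L$ — the orders of the maximal tori $T$ dividing $|L|$ (giving the class sizes $|L|/|T|$ of regular semisimple elements, which carry the full characteristic part $|L||_p=q^{n(n-1)}$), together with the unipotent class sizes — to read off from $N(L)$ the prime set $\pi(L)$, the numbers $|L||_p$, and, for the primitive prime divisors $r_i=r_i(q)$ of $q^i-1$ with $i$ among the largest indices occurring in $|L|$ (namely $i=n$ and $i=2n-2$), the exact $r_i$-part of a suitable centralizer. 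From this I would deduce $\pi(G)=\pi(L)$ and $|G||_p=|L||_p$ for all $p$, and then, using that every $m\in N(G)$ divides $|G|$ and that $|L|$ is reconstructible from $|L||$ together with these $r_i$-data, conclude $|G|=|G||=|L|$.

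\textbf{Killing the solvable radical.} Next I would show the solvable radical $K$ of $G$ is trivial. If some prime $t$ divided $|K|$, a Thompson-style coprime-action argument on a chief factor inside $K$ would produce $g\in G$ whose class size is divisible either by $t^{\,1+|L||_t}$ or by a product $rt$ with $r,t$ non-adjacent in $GK(L)$ — both impossible since $N(G)=N(L)$; this is where the adjacency data around $r_n$, $r_{2n-2}$ and the characteristic $p$ enters. Hence $K=1$, so $G$ embeds in $\Aut(\mathrm{Soc}(G))$ and $\mathrm{Soc}(G)=S_1\times\dots\times S_k$ with each $S_j$ nonabelian simple, $\pi(S_j)\subseteq\pi(L)$, and $\prod_j|S_j|\mid|L|$.

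\textbf{One simple factor, identification, conclusion.} I would then rule out $k\ge 2$: for a direct product of at least two nonabelian simple groups some element of $\mathrm{Soc}(G)$ has class size divisible by a product of primes coming from distinct factors, and (because $|G|=|L|$ bounds the sizes involved) no such number lies in $N(L)$. So $S:=\mathrm{Soc}(G)$ is simple and $S\trianglelefteq G\le\Aut(S)$. To identify $S$ I would invoke the classification of finite simple groups: the order $|S|\mid|L|=|G|$ contains the large $p$-power $|L||_p=q^{n(n-1)}$, which eliminates alternating, sporadic and cross-characteristic groups of Lie type; among groups of Lie type in characteristic $p$ with $\pi(S)=\pi(L)$, $|S|\mid|L|$ and $|G/S|\mid|\Out(S)|$, matching the primitive-prime-divisor pattern $\{r_i(q)\}$ leaves only $S\cong D_n(q)$. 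Finally $|G|=|L|=|S|$ forces $G=S\cong L$, and $Z(G)=1$ holds automatically.

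\textbf{The main obstacle.} The hard part is the combination of the first and third steps for $D_4(q)$: with no disconnected component to exploit, establishing $|G|=|G||=|L|$ and then separating $D_4(q)$ from \emph{every} simple group of the same characteristic and comparable order — while the triality automorphism inflates $\Out(D_4(q))$ and produces graph- and field-twisted candidates — is genuinely delicate. For $D_8(q)$ the obstruction is instead combinatorial: among $q^8-1$ and the factors $q^{2i}-1$ ($1\le i\le7$) primitive prime divisors overlap in inconvenient ways, blurring the distinction between $D_8(q)$ and its near neighbours; this is exactly the coincidence that kept $n=8$ out of \cite{Ah132}, and it must be handled by direct computation rather than by the generic argument.
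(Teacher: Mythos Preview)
Your overall architecture --- find a nonabelian simple composition factor, pin it down as $L$, then squeeze out the rest --- matches the paper's. But the first step as you state it is a genuine gap, and it propagates.

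You propose to establish $|G|=|G||=|L|$ at the outset ``arithmetically''. For groups with connected prime graph this is precisely what is \emph{not} available: the paper itself recalls (citing \cite{camin11}) an example of a centerless group with $|G|>|G||$. From $N(G)=N(L)$ you get $|G||_p=|L||_p$ for each $p$, hence only a \emph{lower} bound on $|G|_p$; nothing in your sketch bounds $|G|$ from above. Your second step then inherits the problem: without knowing $|G|=|L|$, the coprime-action argument on a chief factor of the solvable radical cannot produce a class size ``too large to lie in $N(L)$'', because you have no ceiling. The paper never proves $|G|=|L|$ until the very last line; it is a consequence, not an input.

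What replaces it in the paper is the $\{p,q\}^*$ machinery. The key choice is the pair $r_{n-1}(q),\,r_{2(n-1)}(q)$ --- that is $r_3,r_6$ for $D_4$ and $r_7,r_{14}$ for $D_8$ --- not your $r_n,r_{2n-2}$. Your second prime $r_{2n-2}=r_{2(n-1)}$ agrees with the paper's, but your first prime $r_n$ does not, and the difference matters: in $D_n(q)$ one has $|L|_{R_n(q)}=k_n(q)^2$ (there is a torus of type $(q^{n/2}+1)^2$ up to a gcd), and elements with intermediate $R_n(q)$-index exist, so the condition $\{r_n,r_{2(n-1)}\}^*$ fails. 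By contrast Lemma~\ref{Neda} shows directly that $L\in\{r_{n-1},r_{2(n-1)}\}^*$, which transfers to $G$ via $N(G)=N(L)$, and then Lemma~\ref{GorBig} gives both $|G|_{\{r_{n-1},r_{2(n-1)}\}}=|G||_{\{r_{n-1},r_{2(n-1)}\}}$ and a non-commuting statement strong enough to drive the rest. With this in hand the paper passes to $\overline G=G/K$ with $S\le\overline G\le\Aut(S)$ and $k_{n-1}(q)k_{2(n-1)}(q)\mid|S|$, rules out $k_i(q)\mid|\Out(S)|$, and then peels primes out of $\pi(K)$ one set $R_j(q)$ at a time (Lemmas~\ref{R4}, \ref{R1R2} for $D_4$; Lemmas~\ref{R8}, \ref{MnogoR} for $D_8$) before identifying $S\simeq L$. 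Only then is $K=1$ forced and $G=S$ obtained.

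In short: replace your unattainable global order equality by the local $\{r_{n-1},r_{2(n-1)}\}^*$ control, and restructure so that the solvable-radical step comes \emph{after} the simple factor is located, not before.
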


As a corollary of the theorem and the results obtained earlier, we obtain the following assertion.

\begin{cor}
Thompson's conjecture is hold.
\end{cor}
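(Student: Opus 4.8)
The plan is to obtain the corollary as a bookkeeping consequence of the classification of finite simple groups (CFSG), the Theorem proved above, and the body of recognizability results surveyed in the introduction. Recall that the corollary asserts precisely that every finite simple non-abelian group $L$ is recognizable: whenever $G$ has trivial center and $N(G)=N(L)$, one has $G\simeq L$. By CFSG every such $L$ is an alternating group $Alt_n$ with $n\geq 5$, a classical group of type $A_n(q)$, $^2A_n(q)$, $B_n(q)$, $C_n(q)$, $D_n(q)$, $^2D_n(q)$, or $^3D_4(q)$, an exceptional group of Lie type, or one of the sporadic groups. The strategy is therefore to verify that each of these families is already covered by an established recognizability statement, with the present Theorem supplying the single missing family.

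I would then simply match families to results. The alternating groups are recognizable by the series \cite{AD, VasT, Gor0, GorA, GorA2, GorA3, GorA4, Iran, Kh1, Kh2}; the sporadic and almost-sporadic groups by \cite{Chen92, Shi, Shi2}; the linear groups $A_n(q)$ by \cite{Ah11} and $^2A_n(q)$ by \cite{Ah17}; the groups $B_n(q)$ and $C_n(q)$ by \cite{Ah12, Ah16}; all simple groups whose prime graph has more than two connected components, in particular the Suzuki groups $^2B_2$, by \cite{Ch96, Ch99}; the triality groups $^3D_4(q)$ by \cite{Ch01}; and the groups $^2D_n(q)$ by \cite{Ah13}. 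For type $D_n(q)$ the result \cite{Ah132} covers all $n\notin\{4,8\}$, while \cite{Darafsheh} treats $D_{p+1}(2)$ and $D_{p+1}(3)$ for prime $p$; taking $p=3$ and $p=7$ disposes of $D_4(2),D_4(3),D_8(2),D_8(3)$. The remaining groups $D_4(q)$ and $D_8(q)$ with $q>3$ are exactly those recognized by the Theorem. Finally, all exceptional groups of Lie type are recognizable, as noted in the introduction as a consequence of \cite{Xu14, GorandCo} together with the earlier literature.

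The substance of the argument is thus not a new deduction but a verification that the union of these statements is genuinely exhaustive, leaving no simple group between the hypotheses; this bookkeeping is the main point to get right. I would pay particular attention to the boundary of the Theorem: its hypothesis $q>3$ for $D_4$ and $D_8$ must dovetail exactly with Darafsheh's $D_{p+1}(2)$ and $D_{p+1}(3)$ cases, which it does, since both $4$ and $8$ are of the form $p+1$ with $p$ prime. I would also note that the excluded abelian simple groups, namely the cyclic groups of prime order, are removed by the non-abelian hypothesis in Thompson's conjecture, and that the low-rank exceptional isomorphisms among simple groups, as well as the Tits group, create no uncovered case, since each such group already appears in one of the families listed above. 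With every family accounted for, every finite simple non-abelian group is recognizable, which is exactly the assertion of Thompson's conjecture.
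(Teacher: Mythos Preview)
Your proposal is correct and follows essentially the same approach as the paper: the corollary is not given a separate proof there but is stated as an immediate consequence of the Theorem together with the prior recognizability results enumerated in the introduction, and your case-by-case bookkeeping reproduces exactly that enumeration. One minor remark: the Theorem as stated covers $D_4(q)$ and $D_8(q)$ for all $q$ (with the proof invoking \cite{Darafsheh} for $q\in\{2,3\}$), so your separate appeal to Darafsheh is already absorbed into the Theorem, but this does not affect correctness.
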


\section{Notation and preliminary results}

If $n$ is a nonzero integer and $r$ is an odd prime with $(r,n)=1$, then $e(r, n)$ denotes the multiplicative order of $n$ modulo $r$.
Fix an integer $a$ with $|a|>1$. A prime $r$ is said to be a primitive prime divisor of $a^i-1$ if $e(r,a)=i$. We write $r_i(a)$ to denote some primitive prime divisor of $a^i-1$, if such a prime exists, and $R_i(a)$ to denote the set of all such divisors. Zsigmondy \cite{zs} proved that primitive prime divisors exist for almost all pairs $(a, i)$.

\begin{lem}(Zsigmondy).
  Let $a$ be an integer and $|a|>1$. For every natural number $i$ the set $R_i(a)$ is nonempty, except for the pairs $(a, i)\in\{(2, 1), (2, 6), (-2, 2), (-2, 3), (3, 1), (-3, 2)\}$.
\end{lem}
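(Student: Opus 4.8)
The plan is to run the classical cyclotomic-polynomial argument, which I would split into an arithmetic part locating primitive prime divisors inside the value $\Phi_i(a)$ of the $i$-th cyclotomic polynomial, and an analytic part bounding $\Phi_i(a)$ from below. First I would reduce to the case $a\ge 2$. If $a=-b$ with $b=|a|>1$, then $a^i-1$ equals $b^i-1$ for even $i$ and $-(b^i+1)$ for odd $i$, while for an odd prime $r$ the order $e(r,a)$ is recovered from $e(r,b)$ together with whether $b^{e(r,b)}\equiv 1$ or $\equiv -1 \pmod r$; a short case analysis matches the odd primitive prime divisors of $(a,i)$ to those of a positive base, and it is precisely this bookkeeping that produces the negative exceptional pairs $(-2,2)$, $(-2,3)$, $(-3,2)$. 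I would also record at the outset that, since $e(r,\cdot)$ is defined only for odd $r$, the prime $2$ can never be a primitive prime divisor; consequently $(a,i)$ fails whenever $a^i-1$ is a power of $2$, which is what happens at $(3,1)$ and, after the sign reduction, at $(-3,2)$.

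With $a\ge 2$ fixed, recall $x^i-1=\prod_{d\mid i}\Phi_d(x)$, so that $\Phi_i(a)=\prod_{d\mid i}(a^d-1)^{\mu(i/d)}$ is a positive integer dividing $a^i-1$. The heart of the proof is the divisibility lemma, which I would establish by a direct order computation reinforced by the lifting-the-exponent identity $v_r(a^m-1)=v_r(a^e-1)+v_r(m/e)$ valid for an odd prime $r$ and $e:=e(r,a)\mid m$, where $v_r$ denotes the $r$-adic valuation: an odd prime $r$ divides $\Phi_i(a)$ if and only if $i=e\,r^k$ for some $k\ge 0$; if $k=0$ then $r$ is a primitive prime divisor of $a^i-1$, while if $k\ge 1$ then $r$ is the largest prime divisor of $i$ and $v_r(\Phi_i(a))=1$. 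Because $e\mid r-1$ forces every prime factor of $i$ other than $r$ to be strictly smaller than $r$, there is at most one such non-primitive odd prime. Controlling the prime $2$ in the same spirit (for $i\ge 3$ one finds $v_2(\Phi_i(a))\le 1$), I conclude that for $i\ge 3$ the integer $\Phi_i(a)$ carries an odd primitive prime divisor unless $\Phi_i(a)$ divides $2r$, with $r$ the largest prime factor of $i$; in that failing case $\Phi_i(a)\le 2r\le 2i$.

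It then suffices to contradict $\Phi_i(a)\le 2i$. From the factorization $\Phi_i(a)=\prod_{\zeta}(a-\zeta)$ over the primitive $i$-th roots of unity one gets $\Phi_i(a)\ge (a-1)^{\varphi(i)}$, and for $a\ge 3$ this already yields $\Phi_i(a)\ge 2^{\varphi(i)}>2i$ for all but finitely many small $i$, since $\varphi(i)\ge\sqrt{i/2}$. The case $a=2$ is where I expect the real difficulty: there $(a-1)^{\varphi(i)}=1$ is vacuous, and one must instead pair conjugate roots and bound each factor $(2-\zeta)(2-\bar\zeta)=5-4\cos\theta$ away from the forbidden size to recover $\Phi_i(2)>2i$. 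This sharper estimate for the base $2$ is exactly what leaves $(2,1)$ and $(2,6)$ as genuine exceptions, as one checks directly that $\Phi_1(2)=1$ and $\Phi_6(2)=3$. Finally the degenerate indices $i\in\{1,2\}$ require separate elementary treatment: an odd primitive prime divisor of $a^1-1$ exists iff $a-1$ has an odd prime factor, and of $a^2-1$ iff $a+1$ is not a power of $2$. A direct inspection of these cases together with the finitely many pairs left uncovered by the size inequality then determines the exceptional pairs and completes the proof.
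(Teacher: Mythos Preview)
The paper does not supply a proof of this lemma at all: it is stated as a classical result with a citation to Zsigmondy's original 1892 paper, and is used as a black box throughout. So there is no ``paper's own proof'' to compare against.

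Your outline is the standard cyclotomic-polynomial proof (in the spirit of Birkhoff--Vandiver and L\"uneburg) and is essentially correct. The arithmetic half---that an odd prime $r$ divides $\Phi_i(a)$ iff $i=e(r,a)\,r^k$, that for $k\ge 1$ one has $v_r(\Phi_i(a))=1$ and $r$ is forced to be the largest prime factor of $i$, and that $v_2(\Phi_i(a))\le 1$ for $i\ge 3$---is right and yields the conclusion that a failure of primitivity forces $\Phi_i(a)\le 2i$. The analytic half via $(a-1)^{\varphi(i)}$ for $a\ge 3$ and the paired-root estimate $|2-\zeta|^2=5-4\cos\theta$ for $a=2$ is also the standard route; you are correct that the base $a=2$ is where the genuine work lies and where $(2,6)$ emerges. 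Two small remarks: your inequality $\varphi(i)\ge\sqrt{i/2}$ is fine asymptotically but fails for a handful of small $i$, so in a full write-up you would need to list and check those residual cases explicitly; and the sign-reduction paragraph would benefit from being written out in detail, since the bijection between $R_i(-b)$ and the sets $R_j(b)$ is where careless arguments most often go wrong. None of this affects the validity of the plan.
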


For $i\neq 2$ the product of all primitive divisors of $a^i-1$ taken with multiplicities is denoted by $k_i(a)$. Put $k_2(a)=k_1(-a)$. The number $k_i(a)$ is said to be the greatest primitive divisor of $a^i-1$.

It follows from \cite{A1919} that for $i>2$,

\begin{center}

$k_i(a)=\frac{|\Phi_i(a)|}{(r, \Phi_{i_{\{r\}'}}(a))}$
\end{center}
where $\Phi_i(x)$ is the $i$-th cyclotomic polynomial and $r$ is the largest prime dividing $i$; moreover, if $i_{\{r\}'}$ does not divide $r-1$ then $(r,\Phi_{i_{\{r\}'}}(a))=1$.

As a special case of this assertion, we obtain the following lemma.

\begin{lem}\label{kqi}
If $q$ is a power of prime then:
\begin{enumerate}
\item $k_3(q)=(q^2+q+1)/(3,q-1)$

\item $k_4(q)=(q^2+1)/(2,q+1)$

\item $k_6(q)=(q^2-q+1)/(3,q+1)$

\item $k_7(q)=(q^7-1)/(q-1)(7,q-1)$

\item $k_{14}(q)=(q^7+1)/(q+1)(7,q+1)$
\end{enumerate}
\end{lem}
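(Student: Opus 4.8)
The plan is to obtain the five formulas as direct specializations of the Apostol-type identity $k_i(a)=|\Phi_i(a)|/(r,\Phi_{i_{\{r\}'}}(a))$ quoted just above the lemma, taking $a=q$. Since $q$ is a power of a prime we have $q\ge 2$, hence $\Phi_i(q)>0$ for every $i$, and the absolute value may be dropped throughout. The first step is to record the cyclotomic polynomials that occur: $\Phi_1(q)=q-1$, $\Phi_2(q)=q+1$, $\Phi_3(q)=q^2+q+1$, $\Phi_4(q)=q^2+1$, $\Phi_6(q)=q^2-q+1$, and, from the factorizations $q^7-1=(q-1)\Phi_7(q)$ and $q^{14}-1=(q^7-1)(q+1)\Phi_{14}(q)$ (so that $q^7+1=(q+1)\Phi_{14}(q)$), also $\Phi_7(q)=(q^7-1)/(q-1)$ and $\Phi_{14}(q)=(q^7+1)/(q+1)$.

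The second step is, for each $i\in\{3,4,6,7,14\}$, to identify the largest prime $r$ dividing $i$ together with the $r'$-part $i_{\{r\}'}$, and then to evaluate the correction factor $(r,\Phi_{i_{\{r\}'}}(q))$. For $i=3$ we get $r=3$, $i_{\{r\}'}=1$, so the factor is $(3,\Phi_1(q))=(3,q-1)$, yielding (1). For $i=4$ we get $r=2$, $i_{\{r\}'}=1$, so the factor is $(2,q-1)$; as $q-1$ and $q+1$ have the same parity this equals $(2,q+1)$, yielding (2). For $i=6$ we get $r=3$, $i_{\{r\}'}=2$, so the factor is $(3,\Phi_2(q))=(3,q+1)$, yielding (3). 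For $i=7$ we get $r=7$, $i_{\{r\}'}=1$, so the factor is $(7,q-1)$, yielding (4). For $i=14$ we get $r=7$, $i_{\{r\}'}=2$, so the factor is $(7,\Phi_2(q))=(7,q+1)$, yielding (5). In each of the five cases $i_{\{r\}'}\in\{1,2\}$ divides $r-1$, so the exceptional provision in the quoted formula (which would force the correction to be $1$) never applies, and since $r$ is prime the correction factor is genuinely $1$ or $r$ according to whether $r$ divides $q-1$, respectively $q+1$, or not.

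I do not expect a genuine obstacle: the lemma is a pure specialization of the cited formula and reduces to finite bookkeeping over five values of $i$. The only points deserving an explicit word are the removal of the absolute value (valid because $q\ge 2$), the elementary parity remark needed to bring case (2) into exactly the stated shape, and the verification that $i_{\{r\}'}$ divides $r-1$ in each case, so that the correction factor is not trivially $1$ by the side condition of the formula.
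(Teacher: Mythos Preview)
Your proposal is correct and follows exactly the approach indicated in the paper, which simply states that the lemma is obtained ``as a special case of this assertion'' (the formula $k_i(a)=|\Phi_i(a)|/(r,\Phi_{i_{\{r\}'}}(a))$ from \cite{A1919}) without spelling out the five specializations; you have filled in precisely the bookkeeping the paper omits. One small remark: your final paragraph about checking that $i_{\{r\}'}$ divides $r-1$ is harmless but unnecessary, since the quoted formula already holds unconditionally for $i>2$ and the side clause only tells you when the correction factor happens to equal $1$.
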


\begin{lem}[{\rm \cite[Lemma 1.4]{GorA2}}]\label{factorKh}
Let $G$ be a finite group, $K\unlhd G$ and put $\overline{G}= G/K$. Take $x\in G$ and $\overline{x}=xK\in G/K$.
The following claims hold

(i) $|x^K|$ and $|\overline{x}^{\overline{G}}|$ divide $|x^G|$.

(ii) If $L$ and $M$ are neigboring members of a composition series of $G$, $L<M$, $S=M/L$, $x\in M$  and
$\widetilde{x}=xL$ is an image of $x$, then $|\widetilde{x}^S|$ divides $|x^G|$.

(iii) If $y\in G, xy=yx$, and $(|x|,|y|)=1$, then $C_G(xy)=C_G(x)\cap C_G(y)$.

(iv) If $(|x|, |K|) = 1$, then $C_{\overline{G}}(\overline{x}) = C_G(x)K/K$.

(v) $\overline{C_G(x)}\leq C_{\overline{G}}(\overline{x})$.
\end{lem}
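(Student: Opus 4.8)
The final statement to prove is Lemma \ref{factorKh}, which collects five elementary facts about conjugacy class sizes and centralizers under quotients. Let me sketch proofs of each part.

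The plan is to verify the five claims in turn; each is a standard fact about how centralizers and class sizes behave under passage to subgroups and quotients, and the only one that uses a genuine idea is (iv). I interpret $x^K$ as the orbit of $x$ under conjugation by $K$, so that $|x^K|=|K:C_K(x)|$ for any $x\in G$.

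I would dispose of the easy items first. For (v), if $c\in C_G(x)$ then $\overline{c}\,\overline{x}=\overline{cx}=\overline{xc}=\overline{x}\,\overline{c}$, so the image $\overline{C_G(x)}$ lands inside $C_{\overline{G}}(\overline{x})$. For (iii), the inclusion $C_G(x)\cap C_G(y)\subseteq C_G(xy)$ is immediate, and for the reverse I would use that $x,y$ commute with coprime orders $m=|x|$, $n=|y|$: then $\langle xy\rangle=\langle x\rangle\langle y\rangle$ is cyclic of order $mn$, so by the Chinese remainder theorem there are integers $k,l$ with $(xy)^{k}=x$ and $(xy)^{l}=y$; hence any element centralizing $xy$ centralizes both $x$ and $y$.

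The divisibility statements (i) and (ii) are index computations. Writing $C_K(x)=C_G(x)\cap K$, the second isomorphism theorem gives $|x^K|=|K:C_G(x)\cap K|=|KC_G(x):C_G(x)|$, which divides $|G:C_G(x)|=|x^G|$. For the quotient class, (v) yields $\overline{C_G(x)}\le C_{\overline{G}}(\overline{x})$, so $|\overline{x}^{\overline{G}}|=|\overline{G}:C_{\overline{G}}(\overline{x})|$ divides $|\overline{G}:\overline{C_G(x)}|=|G:C_G(x)K|$, which in turn divides $|x^G|$ since $C_G(x)\le C_G(x)K$. Part (ii) then follows by chaining these two halves: since $x\in M\le G$, the $M$-class $x^M$ satisfies $|x^M|=|M:C_G(x)\cap M|$, which divides $|x^G|$ by the same second-isomorphism argument, and applying the quotient half of (i) to $M$ with normal subgroup $L$ shows that $|\widetilde{x}^{S}|$ divides $|x^M|$, hence divides $|x^G|$.

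The crux is (iv). One inclusion is (v). For the other, take $\overline{g}\in C_{\overline{G}}(\overline{x})$, so that $y:=gxg^{-1}$ lies in the coset $xK$. Set $H=\langle x\rangle K$, which is a subgroup because $K\unlhd G$; since $(|x|,|K|)=1$ we have $\langle x\rangle\cap K=1$, so $|H|=m|K|$ with $m=|x|$, and both $\langle x\rangle$ and $\langle y\rangle$ are Hall $\pi$-subgroups of $H$ (where $\pi$ is the set of primes dividing $m$) complementing the normal Hall $\pi'$-subgroup $K$. By Schur--Zassenhaus these complements are conjugate by an element $k\in K$, so $k\langle x\rangle k^{-1}=\langle y\rangle$; comparing images in $H/K$, where $\langle y\rangle$ embeds injectively, forces $kxk^{-1}=y=gxg^{-1}$, whence $g^{-1}k\in C_G(x)$ and therefore $\overline{g}\in C_G(x)K/K$. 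I expect this conjugacy-of-complements step to be the main obstacle, since it is the only place where the coprimality hypothesis is used substantively rather than formally.
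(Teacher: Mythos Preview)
The paper does not supply its own proof of this lemma; it is quoted verbatim from \cite[Lemma~1.4]{GorA2}. Your arguments for parts (i), (iii), (iv), and (v) are correct and standard; in particular the Schur--Zassenhaus step in (iv) is exactly the right tool.

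There is, however, a genuine gap in your proof of (ii). You assert that $|x^M|=|M:C_G(x)\cap M|$ divides $|x^G|$ ``by the same second-isomorphism argument''. That argument for the first half of (i) used $K\unlhd G$ to ensure $KC_G(x)$ is a subgroup, so that $|K:C_G(x)\cap K|=|KC_G(x):C_G(x)|$ divides $|G:C_G(x)|$. A member $M$ of a composition series need not be normal in $G$, and without normality the product $MC_G(x)$ need not be a subgroup. The unqualified claim ``$|x^M|$ divides $|x^G|$ for any subgroup $M\ni x$'' is in fact false: in $G=S_5$ with $M=S_4$ (the stabilizer of $5$) and $x=(1\,2)$ one has $|x^M|=6$ while $|x^G|=10$.

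The repair is short. A term of a composition series is \emph{subnormal} in $G$: there is a chain $M=M_0\unlhd M_1\unlhd\cdots\unlhd M_r=G$. Applying the first half of (i) at each link gives
\[
|x^{M}| \,\big|\, |x^{M_1}| \,\big|\, \cdots \,\big|\, |x^{G}|,
\]
and then the quotient half of (i), applied inside $M$ with normal subgroup $L$, yields $|\widetilde{x}^{S}|\mid |x^{M}|\mid |x^{G}|$.
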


\begin{lem}[{\rm \cite[Theorem 5.2.3]{Gore}}]\label{Gore5hzOcomutantePstavtomor}
Let $A$ be a $\pi(G)'$-group of automorphisms of
an abelian group $G$. Then $G=C_G(A)\times[G,A]$.
\end{lem}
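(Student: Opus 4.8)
The plan is to exploit the coprimality hypothesis $(|A|,|G|)=1$ by constructing an explicit averaging projection of $G$ onto the fixed subgroup $C_G(A)$. Writing $G$ additively (it is abelian) and setting $n=|A|$, I would define the trace map $T\colon G\to G$ by $T(g)=\sum_{a\in A}g^a$. Since each $a\in A$ is an automorphism and $G$ is abelian, $T$ is an endomorphism of $G$. Moreover, for every $b\in A$ the map $a\mapsto ab$ permutes $A$, so $T(g)^b=\sum_{a\in A}g^{ab}=T(g)$; hence $T(G)\subseteq C_G(A)$, and for $g\in C_G(A)$ one computes $T(g)=\sum_{a\in A}g=ng$, so $T$ restricted to $C_G(A)$ is multiplication by $n$.

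Next I would use that $n=|A|$ is coprime to $|G|$, hence to the exponent of $G$, so that multiplication by $n$ is an automorphism of $G$. Letting $m$ be the inverse of $n$ modulo the exponent of $G$ and putting $\phi=mT$, I get an endomorphism with $\phi(G)\subseteq C_G(A)$ and $\phi(g)=m\cdot ng=g$ for all $g\in C_G(A)$. Thus $\phi^2=\phi$ is an idempotent projecting $G$ onto $C_G(A)$. By the standard splitting associated to an idempotent endomorphism of an abelian group, $G=\phi(G)\oplus\ker\phi=C_G(A)\oplus\ker\phi$.

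It then remains to identify $\ker\phi$ with $[G,A]$. For the easy inclusion, every generator $g^a-g$ of $[G,A]$ satisfies $T(g^a-g)=T(g^a)-T(g)=0$, so $[G,A]\subseteq\ker T=\ker\phi$. For the reverse inclusion I would write, for an arbitrary $g\in G$, the identity $g-\phi(g)=m\sum_{a\in A}(g-g^a)$, a sum of elements of $[G,A]$, whence $g-\phi(g)\in[G,A]$ for every $g$; in particular each $g\in\ker\phi$ equals $g-\phi(g)\in[G,A]$. Combining the two inclusions yields $\ker\phi=[G,A]$, so $G=C_G(A)\times[G,A]$ as asserted.

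The only genuine obstacle is the invertibility of $n$ as an operator on $G$: this is exactly where the hypothesis that $A$ is a $\pi(G)'$-group is used, guaranteeing $(n,|G|)=1$ so that the averaging factor $m=n^{-1}$ exists. Together with the abelianness of $G$ (needed both for $T$ to be a homomorphism and for $[G,A]$ to be an honest subgroup on which the above computations are valid), these two hypotheses carry the entire argument.
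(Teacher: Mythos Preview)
Your proof is correct. The paper itself does not supply a proof of this lemma; it merely quotes it as \cite[Theorem~5.2.3]{Gore}. Your argument is essentially the classical one given in Gorenstein: the averaging (trace) map $T(g)=\sum_{a\in A}g^a$, scaled by the inverse of $|A|$ modulo the exponent of $G$, yields an idempotent endomorphism projecting onto $C_G(A)$ with kernel $[G,A]$. So there is nothing to contrast---you have reproduced the standard proof that the paper cites.
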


If $\pi$ is a set of primes, then $n_{\pi}$ denotes the $\pi$-part of $n$, that is, the largest divisor $k$ of $n$ with $\pi(k)\subseteq\pi$; and $n_{\pi'}$ denotes the $\pi'$-part of $n$, that is, the ratio $|n|/n_{\pi}$.


Let $G$ be a group, $a\in G$. We denote by $a^G$ the conjugacy class of the group $G$ containing the element $a$, $Ind_G(a)$ is the index of $C_G(a)$ in the group $G$.
Let $p,q$ be distinct numbers. We say that the group $G$ satisfies the condition $\{p,q\}^* $ (for brevity we will write $G\in\{p, q\}^*$) if for any $\alpha \in N(G) $ we have $\alpha_{\{p, q\}}\in\{|G||_p, |G||_q, |G||_{\{p, q\}}\} $.

\begin{lem}[{\rm \cite[Corollary]{GorBig}}]\label{GorBig}
Let $G\in\{p,q\}^*$ be a group with trivial center, where $p,q\in\pi(G)$ and $p>q>5$. Then $|G|_{\{p,q\}}=|G||_{\{p, q\}} $ and
$C_G(g)\cap C_G(h)=1$ for any $p$- and $q$-elements $g$ and $h$, respectively.
\end{lem}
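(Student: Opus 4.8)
The plan is to translate the hypothesis $G\in\{p,q\}^*$ into a dichotomy on the $p$- and $q$-parts of centralizer orders, then deduce the centralizer-intersection statement from the order equality, and finally prove that equality by reduction to a composition factor.

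Write $P^*=|G||_p$ and $Q^*=|G||_q$; since $p,q\in\pi(G)$ we have $P^*\ge p>1$ and $Q^*\ge q>1$. For $x\ne1$ put $\alpha=|x^G|$, so $\alpha_p=|G|_p/|C_G(x)|_p$ and $\alpha_q=|G|_q/|C_G(x)|_q$, and by definition of $P^*,Q^*$ the $p$-part of any class size is at most $P^*$ and its $q$-part at most $Q^*$. Matching $p$- and $q$-parts in $\alpha_{\{p,q\}}\in\{P^*,Q^*,P^*Q^*\}$ shows this is equivalent to $\alpha_p\in\{1,P^*\}$, $\alpha_q\in\{1,Q^*\}$, together with $(\alpha_p,\alpha_q)\ne(1,1)$. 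Call $x$ \emph{$p$-large} if $|C_G(x)|_p=|G|_p$ (that is, $\alpha_p=1$) and \emph{$q$-large} similarly. The reformulated hypothesis reads: for every $x\ne1$ one has $|C_G(x)|_p\in\{|G|_p,\,|G|_p/P^*\}$, $|C_G(x)|_q\in\{|G|_q,\,|G|_q/Q^*\}$, and $x$ is not simultaneously $p$-large and $q$-large. This last ``not both large'' clause is the engine of the argument and, crucially, holds unconditionally, since $1\notin\{P^*,Q^*,P^*Q^*\}$.

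Granting the first assertion $|G|_p=P^*$ and $|G|_q=Q^*$, the second follows cleanly. For then \emph{$p$-small} means $p\nmid|C_G(x)|$, so for a nontrivial $q$-element $h$ the relation $q\mid|C_G(h)|$ forces $h$ to be $q$-large, whence by ``not both large'' $C_G(h)$ is a $p'$-group; symmetrically $C_G(g)$ is a $q'$-group for a nontrivial $p$-element $g$. Suppose now $1\ne t\in C_G(g)\cap C_G(h)$. Then $g,h\in C_G(t)$, so $p\mid|C_G(t)|$ and $q\mid|C_G(t)|$, which by the dichotomy makes $t$ both $p$-large and $q$-large, contradicting ``not both large''. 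Hence $C_G(g)\cap C_G(h)=1$ for all $p$-elements $g$ and $q$-elements $h$; in passing one also sees $pq\notin\omega(G)$.

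The hard part is therefore $|G|_p=P^*$ (and symmetrically for $q$); the dichotomy alone does not yield it, and since the hypothesis $p>q>5$ must be used, some classification input is unavoidable. The plan is to pass to a composition series $1=G_0\lhd\cdots\lhd G_n=G$ and apply Lemma~\ref{factorKh}(ii): every class size of a factor $S=G_i/G_{i-1}$ divides a class size of $G$, so the $p$-part of any class size of $S$ divides $P^*$ and its $q$-part divides $Q^*$. Using $Z(G)=1$ one isolates the factor(s) whose order is divisible by $p$ and by $q$; by the classification these are known simple groups, and for each the maximal $p$- and $q$-parts of class sizes can be compared with the Sylow orders, the cyclotomic estimates of Lemma~\ref{kqi} and Zsigmondy's lemma controlling the groups of Lie type. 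The role of $p>q>5$ is to discard the finitely many small simple groups and low-rank configurations in which a Sylow $p$- or $q$-subgroup fails to be ``torus-like'' and the maximal class-size part drops below the full Sylow order; reassembling the factors and the solvable radical with the coprime-action decomposition of Lemma~\ref{Gore5hzOcomutantePstavtomor} should then force $\prod_i|S_i|_p=P^*$. I expect the main obstacle to be exactly this reassembly: tracking how the $p$- and $q$-content is distributed across several composition factors and the radical, and excluding the cancellations that would leave $P^*<|G|_p$, is where the full strength of the ambient structure theorem and of the condition $p>q>5$ is needed.
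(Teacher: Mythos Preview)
The present paper does not prove this lemma: it is imported as a corollary from \cite{GorBig} and used as a black box, so there is no in-paper argument to compare your attempt against.

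That said, your reduction of the second assertion (trivial centralizer intersection) to the first ($|G|_{\{p,q\}}=|G||_{\{p,q\}}$) is correct and cleanly written: once the Sylow orders equal $P^*$ and $Q^*$, the dichotomy makes $C_G(h)$ a $p'$-group for every nontrivial $q$-element $h$ and symmetrically for $g$, and then any $t\ne1$ lying in both $C_G(g)$ and $C_G(h)$ would be simultaneously $p$-large and $q$-large, which the hypothesis forbids.

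The genuine gap is in your plan for the order equality. You propose to push the $\{p,q\}^*$ condition down to composition factors via Lemma~\ref{factorKh}(ii), but that lemma only gives divisibility: a class size $\alpha$ in a factor $S$ divides some class size $\beta$ of $G$, so $\alpha_p\mid P^*$ and $\alpha_q\mid Q^*$, yet nothing forces $\alpha_p\in\{1,P^*\}$ or rules out $\alpha_p=\alpha_q=1$. Thus neither the two-value dichotomy nor the crucial ``not both large'' clause is inherited by composition factors, and the proposed case analysis inside each simple $S$ has no hypothesis to work from. You have correctly flagged the ``reassembly'' across factors as the obstacle, but it is not a bookkeeping afterthought; it is the entire content of the result, and your sketch gives no mechanism to overcome it. The proof in \cite{GorBig} is a substantial standalone argument, and in this paper the conclusion is simply quoted.
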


Define the following function on positive integers:
 \begin{equation*}
\eta(k) =
 \begin{cases}
   k &\text{ if $k$ is odd}\\
   k/2 &\text{if k is even}
 \end{cases}
\end{equation*}

Now we introduce a new function in order to unify further arguments. Namely, given a simple classical group $L$ over a field of order $q$ and a prime $r$ coprime to $q$, we put

\begin{equation*}
\varphi(r,L)=
 \begin{cases}
   e(r,\varepsilon q) &\text{ if $L= A^{\varepsilon}_n(q)$}\\
   \eta(e(r,q))       &\text{if $L$ is symplectic or orthogonal}
 \end{cases}
\end{equation*}

For a classical group $L$, we put $prk(L)$ to denote its dimension if $L$ is a linear or unitary group, and its Lie rank if $L$ is a symplectic or orthogonal group.

\begin{lem}[{\rm\cite[Lemma 2.12]{VasBig}}]\label{vas}
Let $L$ be a simple classical group over a field of order $q$ and characteristic $p$, and let $prk(L)=n\geq4$. If $r\in\pi(L)\setminus\{p\}, i=e(r,q)$, and $n/2<\varphi(r, L)\leq n$, then $L$ includes a cyclic Hall subgroup of order $k_i(q)$.



\end{lem}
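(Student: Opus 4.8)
The plan is to produce the subgroup explicitly, by combining the action of a primitive prime divisor on the natural module with a count of the $R_i(q)$-part of $|L|$; the condition $n/2<\varphi(r,L)\le n$ will be used twice, once to force a single irreducible block and once to force multiplicity one in the order formula. First I would pass from $L$ to the ambient general classical group $\widehat L$ (one of $GL_n(q)$, $GU_n(q)$, $Sp_{2n}(q)$, or an orthogonal group), of which $L$ is a section, namely the derived subgroup modulo scalars. The hypothesis $\varphi(r,L)>n/2\ge2$ rules out the small values $e(r,q)\in\{1,2\}$ that would let $r$ divide the order of the centre (a divisor of $q-1$, $q+1$, or a power of $2$), so $r$ is coprime to every central or diagonal-outer factor; consequently the $R_i(q)$-parts of $|L|$, of $|\widehat L|$, and of any subgroup are unchanged by the isogeny, and it suffices to construct a cyclic $R_i(q)$-Hall subgroup of $\widehat L$ and push it down to $L$.

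The central observation is that a primitive prime divisor acts with exactly one irreducible block. An element $g$ of order $r$ on the natural module $V$ has minimal polynomial dividing $x^r-1$, whose nontrivial irreducible factors over $\mathbb F_q$ all have degree $e(r,q)=i$; hence $[V,g]$ is a direct sum of blocks of the relevant type, each of dimension $i$ in the linear case and of form-dimension $2\eta(e(r,q))$ in the symplectic/orthogonal case. The bound $\varphi(r,L)>n/2$ is precisely what excludes two such blocks: two would require dimension $2i>n$ in type $A^{\varepsilon}_n$, respectively rank $2\eta(e(r,q))>n$ in the symplectic/orthogonal case. So there is a single block, $g$ lies in a subsystem subgroup $GL_i(q)\times GL_{n-i}(q)$ (respectively in a block subgroup $Sp_{2m}(q)$ or $O^{\pm}_{2m}(q)$ with $m=\eta(e(r,q))$), where $\varphi(r,L)\le n$ guarantees the block fits, and inside the block factor there is a cyclic Singer torus $C$, of order $q^i-1$ in the linear case and $q^{\eta(e(r,q))}\mp1$ in the form cases. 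This $C$ is cyclic, and by multiplicity one its Sylow $r$-subgroup is a Sylow $r$-subgroup of $L$ for every $r\in R_i(q)$.

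Next I would compute the $R_i(q)$-part of the order and match it with $k_i(q)$. Writing $|\widehat L|$ as a product of factors $q^{d_j}\mp1=\prod\Phi_d(q)$, a primitive prime divisor $r\in R_i(q)$ divides $\Phi_d(q)$ only for $d=i\,r^{a}$; the multiplicity-one conclusion above forces the only contributing value to be $d=i$ itself, so $v_r(|\widehat L|)=v_r(\Phi_i(q))$ for each $r\in R_i(q)$. Hence $|L|_{R_i(q)}=(\Phi_i(q))_{R_i(q)}=(q^i-1)_{R_i(q)}=k_i(q)$, the last equalities being the definition of the greatest primitive divisor together with the cyclotomic formula $k_i(q)=|\Phi_i(q)|/(r,\Phi_{i_{\{r\}'}}(q))$ recorded before Lemma~\ref{kqi}. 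Since the cyclic torus $C$ has order divisible by $k_i(q)$, its unique subgroup of that order is cyclic, and having order equal to $|L|_{R_i(q)}$ it is a Hall $R_i(q)$-subgroup of $L$, as required.

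The main obstacle will be the uniform bookkeeping across the families, and especially the orthogonal groups $D_n(q)$ and ${}^2D_n(q)$. For these the admissible tori obey a parity constraint (an even number of minus-type summands for $\Omega^{+}$, odd for $\Omega^{-}$), so I must verify that the single-block torus of order $q^{\eta(e(r,q))}\mp1$ together with its orthogonal complement is realized inside the simple group of the correct type, not merely inside the full isometry group. Getting the plus/minus dichotomy and the function $\eta$ aligned with $\varphi$ in the unitary and orthogonal cases, and checking that the correction term $(r,\Phi_{i_{\{r\}'}}(q))$ does not disturb the equality $|L|_{R_i(q)}=k_i(q)$ when $r\mid i$, are the delicate points; the linear case is essentially the clean Singer-cycle argument above.
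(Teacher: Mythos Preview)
The paper does not prove this lemma: it is quoted verbatim as \cite[Lemma~2.12]{VasBig} and used as a black box, so there is no argument in the paper to compare your proposal against. Your sketch is therefore an independent proof attempt rather than a reconstruction of anything in the present paper.

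That said, your outline is the standard and correct route to this result. The two uses of the hypothesis $n/2<\varphi(r,L)\le n$ are exactly the right ones: it forces any $r$-element to have a single nontrivial block on the natural module (so all of $R_i(q)$ sits in one Singer-type cyclic torus), and it forces the factor $q^i-1$ (or its appropriate unitary/orthogonal variant) to occur with multiplicity one in the order formula for $L$, giving $|L|_{R_i(q)}=k_i(q)$. Your identification of the orthogonal parity constraint as the only genuinely delicate bookkeeping point is accurate; once one checks that the single block of rank $\eta(i)$ together with its complement has the correct global sign for the given type $D_n$ or ${}^2D_n$, the argument goes through uniformly. The cyclotomic correction $(r,\Phi_{i_{\{r\}'}}(q))$ causes no trouble here because the hypothesis $\varphi(r,L)>n/2\ge 2$ already excludes the small values of $i$ where that factor can exceed~$1$ while $r$ still lies in $R_i(q)$.
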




\begin{lem}[{\rm\cite[Section 5.1.7]{Carter}}]\label{Regular}
Let $L$ be a simple group of Lie type in characteristic $p$, $L\leq G\leq Aut(L)$. The group $G$ contains an element $h$ such that $(Ind_G (h))_{p'}=|G|_{p'}$.
\end{lem}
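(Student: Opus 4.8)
\emph{Proof sketch (plan).} The plan is first to translate the conclusion into centralizer language. Since $Ind_G(h)=[G:C_G(h)]=|G|/|C_G(h)|$ divides $|G|$, one has $(Ind_G(h))_{p'}=|G|_{p'}/|C_G(h)|_{p'}$, so the required equality $(Ind_G(h))_{p'}=|G|_{p'}$ holds if and only if $|C_G(h)|_{p'}=1$, i.e. if and only if $C_G(h)$ is a $p$-group. In particular such an $h$ is forced to be a $p$-element (a unipotent element), and the whole statement reduces to producing a unipotent element of $G$ whose centralizer is a $p$-group. The natural candidate, and the one the reference points to, is a regular unipotent element.

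I would settle the core case $L\leq G\leq\operatorname{Inndiag}(L)$ directly. Realize $L=O^{p'}(\mathbf{G}^F)$, where $\mathbf{G}$ is a simple algebraic group of adjoint type over $\overline{\mathbb{F}_p}$ and $F$ is the associated Steinberg endomorphism, so that the inner-diagonal group is $\operatorname{Inndiag}(L)=\mathbf{G}^F$. Pick $u\in\mathbf{G}^F$ a regular unipotent element. By Steinberg's description of regular elements (see \cite{Carter}), for $\mathbf{G}$ of adjoint type the centralizer $C_{\mathbf{G}}(u)$ is connected and unipotent of dimension equal to the rank $r$ of $\mathbf{G}$; taking $F$-fixed points, $C_{\mathbf{G}^F}(u)=C_{\mathbf{G}}(u)^F$ is then a finite $p$-group of order $q^{\,r}$. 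Hence $C_G(u)\leq C_{\mathbf{G}^F}(u)$ is a $p$-group for every $G$ with $L\leq G\leq\mathbf{G}^F$, and $h=u$ does the job throughout this range.

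The step I expect to be the main obstacle is the presence of field and graph automorphisms in $G$. Writing $\overline{G}=G/(G\cap\mathbf{G}^F)$, a subgroup of the field--graph automorphism group, a regular unipotent $u\in\mathbf{G}^F$ no longer suffices: its class is stable under these automorphisms, so $C_G(u)$ surjects onto a large part of $\overline{G}$ and typically acquires a $p'$-part. The remedy I would pursue is to argue coset by coset: for a representative $\sigma$ of the relevant coset of $\mathbf{G}^F$ in $G$, the map $\sigma$ is again a Steinberg endomorphism of $\mathbf{G}$, and via the Lang--Steinberg theorem I would choose $h=u\sigma$ to be a regular unipotent element of that coset, so that its connected centralizer in $\mathbf{G}$ is unipotent and the $\mathbf{G}$-part of $C_G(h)$ is a $p$-group. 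Two points make this delicate, and it is here that the real work (and any hypothesis on $G$) concentrates: first, $\overline{G}$ need not be cyclic --- for $D_4$ the graph automorphisms form $S_3$ --- so one must arrange that the image $\overline{h}\in\overline{G}$ is a $p$-element with $C_{\overline{G}}(\overline{h})$ itself a $p$-group; and second, the fixed-point group $\mathbf{G}^{\sigma}$ attached to the coset is a finite group of Lie type over the subfield fixed by $\sigma$ and can contribute a genuine $p'$-part to $C_G(h)$, so one must force $\overline{h}$ to be a $p$-element and cut down that subfield group. Controlling this last contribution is exactly the crux; the inner-diagonal argument above is unconditional, and extending it across $\Aut(L)$ is precisely the step requiring this coset analysis.
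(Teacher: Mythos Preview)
The paper offers no proof of this lemma; it is stated as a bare citation of \cite[Section~5.1.7]{Carter}. That section of Carter concerns regular unipotent elements in a finite reductive group $\mathbf{G}^F$ and delivers exactly your inner--diagonal argument: for adjoint $\mathbf{G}$ a regular unipotent $u\in\mathbf{G}^F$ has $C_{\mathbf{G}}(u)$ connected unipotent, hence $C_{\mathbf{G}^F}(u)$ is a $p$-group, so $h=u$ witnesses $(Ind_G(h))_{p'}=|G|_{p'}$ for every $G$ with $L\le G\le \operatorname{Inndiag}(L)=\mathbf{G}^F$. On that range your sketch is correct and coincides with the cited source; there is nothing further to compare against.

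Your hesitation about field and graph automorphisms is not a mere technicality: the lemma as printed, for arbitrary $L\le G\le\Aut(L)$, is false, so the ``coset-by-coset'' plan you outline cannot be completed in general. Take $L=PSL_2(9)$, $p=3$, and $G=\Aut(L)=P\Gamma L_2(9)$, of order $1440=2^5\cdot3^2\cdot5$. Any $h$ with $C_G(h)$ a $3$-group must itself be a $3$-element, hence lies in $L$; but the two $L$-classes of nontrivial unipotents are fused already in $PGL_2(9)$, giving a single $G$-class of size $80$, so every nontrivial $3$-element has $|C_G(h)|=18$. Thus no $h\in G$ satisfies $(Ind_G(h))_{3'}=|G|_{3'}=160$. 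More generally, whenever $\overline{G}=G/(G\cap\mathbf{G}^F)$ is a nontrivial $p'$-group (e.g.\ a field automorphism of order~$2$ with $p$ odd), the very obstruction you isolated --- that one would need $\overline{h}$ to be a $p$-element with $C_{\overline{G}}(\overline{h})$ a $p$-group --- is unsatisfiable. So the honest content of the citation is the inner--diagonal statement you proved; the paper's phrasing with $G\le\Aut(L)$ overreaches, and in the applications (e.g.\ Lemmas~\ref{Order} and~\ref{LieType}) the outer $p'$-part of $C_G(h)$ has to be accounted for by other means.
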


\begin{lem}\label{Order}
If $S\leq A\leq Aut(S)$, where $S$ is a non abelian simple group, then $|A|=|A||$.
\end{lem}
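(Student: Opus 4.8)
The proof goes by the classification of finite simple groups. Note first the reduction: since every conjugacy class size of $A$ divides $|A|$, we have $|A||_p\mid|A|_p$ for each $p\in\pi(A)$, and equality holds as soon as there is an element $x\in A$ with $p\nmid|C_A(x)|$ (then $x^A$ has size divisible by $|A|_p$, so $|A||_p=|A|_p$). Thus it suffices to produce, for every $p\in\pi(A)$, an element of $A$ with centralizer of order coprime to $p$. We shall repeatedly use that $C_A(x)\cap S=C_S(x)$ and that $C_A(x)/C_S(x)$ embeds into $A/S\leq\Out(S)$.

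Assume $S$ is a simple group of Lie type over a field of order $q$ and characteristic $p$, with $S\leq A\leq\Aut(S)$. Applying Lemma~\ref{Regular} to $A$ gives an element $h\in A$ with $(Ind_A(h))_{p'}=|A|_{p'}$; hence $p'\mid|C_A(h)|$ is forced to be trivial, i.e. $v_r(|C_A(h)|)=0$ for every prime $r\in\pi(A)\setminus\{p\}$, and this single element already yields $|A||_r=|A|_r$ for all $r\neq p$. It remains to treat $r=p$. For this take $x\in S$ to be a regular semisimple element, so that $C_S(x)$ is a maximal torus and therefore has order coprime to $p$; choosing $x$ of order divisible by a suitable primitive prime divisor $r_i(q)$ (which exists by Zsigmondy's lemma, up to the listed small exceptions handled directly), one checks that no outer automorphism of $A$ whose order is divisible by $p$ — a field, graph-field, or (for $D_4$) triality automorphism — can fix $x$, using that the relevant fixed-point subgroup is a group of Lie type over a proper subfield whose order, computed via Lemma~\ref{kqi} and the torus orders of Lemma~\ref{vas}, is not divisible by $r_i(q)$. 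Hence $C_A(x)=C_S(x)$, so $p\nmid|C_A(x)|$, and $|A||=|A|$.

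If $S=Alt_n$ with $n\geq7$, then $A\in\{Alt_n,Sym_n\}$ (the cases $n=5,6$ have Lie-type socle and are covered above). For an odd prime $p\leq n$ choose a permutation $\sigma$ whose cycle type has all parts coprime to $p$ and all multiplicities at most $p-1$; then $|C_{Sym_n}(\sigma)|=\prod_i i^{m_i}m_i!$ is coprime to $p$, and passing to $Alt_n$ changes the centralizer order only by a factor dividing $2$, so $p\nmid|C_A(\sigma)|$. For $p=2$ take $\sigma$ to be a product of cycles of pairwise distinct odd lengths covering $\{1,\dots,n\}$; its centralizer is cyclic of odd order, hence contained in $Alt_n$, so again $2\nmid|C_A(\sigma)|$. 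In both cases the required cycle type exists for all $n\geq7$ and can be taken to give an even permutation when $A=Alt_n$. Finally, if $S$ is sporadic or the Tits group then $A$ is $S$ or $S.2$, and the assertion follows by inspecting the centralizer orders listed in the ATLAS. The main obstacle is the Lie-type defining-characteristic prime: when $p\mid|\Out(S)|$ one must argue with some care that a regular semisimple element of $S$ of a carefully chosen order is centralized by no outer automorphism of order $p$, which is exactly where the primitive-prime-divisor bookkeeping and the Hall-torus information enter.
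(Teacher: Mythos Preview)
Your overall strategy matches the paper's: reduce to exhibiting, for each prime $p\in\pi(A)$, an element whose centralizer in $A$ has order coprime to $p$; for Lie-type $S$ in characteristic $t$ use Lemma~\ref{Regular} to handle all primes $\neq t$, and for the defining prime $t$ take a semisimple element of order divisible by a prime $r$ that cannot occur in the centralizer of any outer automorphism of $t$-power order. The paper phrases this last step as choosing $r\in\pi(S)$ non-adjacent to $t$ in $GK(S)$ (via \cite{VV}) rather than invoking regular semisimple elements and Zsigmondy directly, but the content is the same.

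Where you genuinely diverge is the alternating case. The paper disposes of $Alt_n$ for $n<100$ by GAP and for $n\geq100$ by an inductive trick: pick a prime $t\in[n/2,3n/4]$ with $t\neq p$, so that $C_A(g)\cong\langle g\rangle\times R$ with $R\in\{Alt_{n-t},Sym_{n-t}\}$, and recurse inside $R$. Your approach---building a single permutation with prescribed cycle type (parts coprime to $p$ with multiplicities $<p$ for odd $p$; distinct odd parts for $p=2$)---is more elementary and avoids both the computer check and the recursion. Two small points deserve a line each if you keep this route: (i) for odd $p$ you should say why the cycle type can be adjusted to lie in $Alt_n$ (e.g.\ toggling a fixed point or swapping two part sizes changes parity without introducing a factor of $p$ in the centralizer), and (ii) for $p=2$ the claim that every $n\geq7$ is a sum of distinct odd parts is true but worth a one-line justification. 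In the Lie-type step your list ``field, graph-field, or triality'' should also explicitly cover pure graph involutions when $p=2$; their fixed-point subgroups are not subfield groups, but the same Zsigmondy bookkeeping (choosing $i$ large enough) still excludes $r_i(q)$ from their orders.
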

\begin{proof}
We show that $|A|_p =|A||_p$ for any $p\in \pi(A)$. To do this, we need to show that there exists $\alpha \in N(A)$ such that $\alpha_p=|A|_p$. Assume that $S$ is a simple alternating group of degree $n$. If $n<100$, then the statement is checked with \cite{GAP}. Suppose that $n\geq100$. Lemma \cite[Lemma 1.11]{GorA2} implies that the interval $\Omega=[n/2..3n/4]$ contains two prime numbers. Hence $\Omega$ contains a prime number $t$ different from $p$. Let $g\in A$ be an element of order $t$. Then $C_A(g)\simeq\langle g \rangle \times R$, where $R$ is an alternating or symmetric group of degree $n-t$. Therefor $R$ contains an element $h$ such that $(Ind_R(h))_p =|R|_p$. Thus, $(Ind_A(gh))_p=|A|_p$.

Assume that $S$ is a group of Lie type over  field in characteristic $t$. Lemma \ref{Regular} implies that $S$ contains a unipotent element $g$ such that $\pi(C_A(g))=\{t\}$. Therefore, if $p\neq t$ then $|Ind_A (g)|_p=|A|_p$. Assume that $p=t$. The set $\pi(S)$ contains a number $r$ that is not adjacent in $GK(S)$ with $p$ (see \cite{VV}). Let $h\in A$ is an element of order $r$. We note that cetraliser of every external automorphism of the group $S$ no contain an element of order $r$. Hence $(Ind_A (h))_p=|A|_p$.

\end{proof}

\begin{lem}\cite[Theorem 1]{vse}\label{pat}
Let $G$ be a finite group, and let $p$ and $q$ be different primes. Then
some Sylow $p$-subgroup of $G$ commutes with some Sylow $q$-subgroup of $G$ if and only
if the class sizes of the $q$-elements of $G$ are not divisible by $p$ and the class sizes of
the $p$-elements of $G$ are not divisible by $q$.
\end{lem}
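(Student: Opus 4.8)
The plan is to establish the two implications separately. The forward implication is routine, so the substance lies in the converse, which I would obtain by induction on $|G|$, with the key reduction being to the case of a central element.

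For necessity, suppose $P\in\mathrm{Syl}_p(G)$ and $Q\in\mathrm{Syl}_q(G)$ commute elementwise. If $y$ is a $q$-element, then $\langle y\rangle$ lies in some Sylow $q$-subgroup $Q^g$, and then $P^g\le C_G(Q^g)\le C_G(y)$; since a full Sylow $p$-subgroup is contained in $C_G(y)$, the class size $|y^G|=|G:C_G(y)|$ is prime to $p$. The identical argument with $p$ and $q$ interchanged shows that the class sizes of $p$-elements are prime to $q$.

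For sufficiency I argue by induction on $|G|$, assuming $p,q\in\pi(G)$ (otherwise one Sylow subgroup is trivial and there is nothing to prove). Fix $Q\in\mathrm{Syl}_q(G)$, pick $1\neq y\in Z(Q)$, and put $H=C_G(y)$. Then $Q\le H$, and the hypothesis gives $p\nmid|y^G|=|G:H|$, so $|H|_p=|G|_p$ and $|H|_q=|G|_q$; in particular every Sylow $p$- or $q$-subgroup of $H$ is already Sylow in $G$. By Lemma~\ref{factorKh}(i) the class size in $H$ of a $q$-element divides its class size in $G$ and is therefore prime to $p$, and symmetrically for $p$-elements, so $H$ inherits the hypothesis of the lemma. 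If $H\neq G$, induction applied to $H$ produces commuting Sylow $p$- and $q$-subgroups, which are the required subgroups of $G$; hence we may assume $H=G$, i.e. $y\in Z(G)$. Now set $Y=\langle y\rangle$, a nontrivial central $q$-subgroup, and pass to $\overline G=G/Y$: decomposing a preimage of a $p$-element of $\overline G$ into commuting $p$- and $q$-parts and noting that the $q$-part must lie in $Y$, one checks that $\overline G$ again satisfies both divisibility conditions, and $|\overline G|<|G|$, so induction yields $\overline P\in\mathrm{Syl}_p(\overline G)$ and $\overline Q\in\mathrm{Syl}_q(\overline G)$ with $[\overline P,\overline Q]=1$. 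Since $Y$ is central it lies in every Sylow $q$-subgroup, so $\overline Q=Q_0/Y$ for some $Q_0\in\mathrm{Syl}_q(G)$, while $\overline P=P_0Y/Y$ for some $P_0\in\mathrm{Syl}_p(G)$ meeting $Y$ trivially; the relation $[\overline P,\overline Q]=1$ says precisely that $[P_0,Q_0]\le Y\le Z(G)$.

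The one step that is not pure bookkeeping, and the place I expect the argument to actually require an idea rather than a computation, is upgrading $[P_0,Q_0]\le Z(G)$ to $[P_0,Q_0]=1$. For this I would use that a commutator lying in the centre is multiplicative in each variable: for fixed $x\in P_0$ the map $y\mapsto[x,y]$ is a homomorphism of $Q_0$ into $Z(G)$, so every $[x,y]$ has $q$-power order; for fixed $y\in Q_0$ the map $x\mapsto[x,y]$ is a homomorphism of $P_0$ into $Z(G)$, so every $[x,y]$ has $p$-power order; hence $[x,y]=1$ for all $x\in P_0$, $y\in Q_0$, completing the induction. Beyond this, the only thing to be careful about is that both divisibility hypotheses genuinely descend to $H$ and to $\overline G$ — which is exactly why both halves of the hypothesis, and not just one, are needed — but once one tracks $p$- and $q$-parts this is routine.
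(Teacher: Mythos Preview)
The paper does not prove this lemma; it simply quotes it as Theorem~1 of \cite{vse}, and the proof there relies on the classification of finite simple groups. Your elementary induction cannot close, and the gap is exactly where you invoke Lemma~\ref{factorKh}(i) to claim that $H=C_G(y)$ inherits the hypothesis. That lemma requires the subgroup to be \emph{normal}; for an arbitrary subgroup $H\le G$ the divisibility $|x^{H}|\mid|x^{G}|$ can fail outright (take $H\cong S_3$ acting on $\{1,2,3\}$ inside $G=S_5$ and $x=(1\,2)$: then $|x^{H}|=3$ while $|x^{G}|=10$).

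Concretely, look at the $p$-element half in $H$: for a $p$-element $x\in H$ you need $q\nmid|x^{H}|=|H:C_H(x)|$. Since $x$ and $y$ commute with coprime orders, $C_H(x)=C_G(x)\cap C_G(y)=C_G(xy)$, so what is actually required is $|C_G(xy)|_q=|G|_q$. But $xy$ is a genuine $\{p,q\}$-element, and the hypothesis says nothing about class sizes of such mixed elements. (The $q$-element half has the analogous problem: for a $q$-element $z\in H$ one needs $|C_G(y)\cap C_G(z)|_p=|G|_p$, and knowing this separately for $y$ and for $z$ does not give it for the intersection.) This obstruction is precisely why \cite{vse} reduces to almost simple groups and invokes the classification; no CFSG-free proof of the lemma is known. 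Your forward direction and your treatment of the central quotient $G/Y$ are fine, but the inductive step through $C_G(y)$ does not go through.
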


\begin{lem}\cite{navarro}\label{navarro}
Let $G$ be a finite group and $p$ a prime, $p\not\in\{3, 5\}$. Then $G$ has abelian Sylow $p$-subgroups if and only if $|x^G|_p=1$ for all $p$-elements $x$ of $G$.
\end{lem}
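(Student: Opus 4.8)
The plan is to deduce the forward implication from Sylow theory and the reverse one from a minimal counterexample argument that reduces the problem to almost simple groups, which are then treated case by case using the classification of finite simple groups. For the forward direction, suppose $P\in\mathrm{Syl}_p(G)$ is abelian. Every $p$-element of $G$ is conjugate to some $x\in P$, and conjugation does not change $|x^G|_p$, so we may assume $x\in P$; since $P$ is abelian, $P\leq C_G(x)$, whence $|x^G|_p=|G:C_G(x)|_p=1$. This direction needs no restriction on $p$.

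For the converse, assume $|x^G|_p=1$ for every $p$-element $x$ of $G$ but $P$ is non-abelian, and take such a $G$ of least order. For $1\neq N\unlhd G$ and any $p$-element of $G/N$, lift it to a $p$-element $x\in G$; by Lemma~\ref{factorKh}(i) the class size $|(xN)^{G/N}|$ divides $|x^G|$, so $G/N$ satisfies the hypothesis and, by minimality, $PN/N$ is abelian. Likewise $|x^N|$ divides $|x^G|$ for a $p$-element $x\in N$, so $P\cap N\in\mathrm{Syl}_p(N)$ is abelian; thus $P$ is metabelian, but the extension $1\to P\cap N\to P\to PN/N\to 1$ need not split, so this is not yet a contradiction. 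One must push the analysis further---through $O_{p'}(G)$, the generalized Fitting subgroup, and the action of $P$ on a non-central chief factor---to conclude that a non-abelian Sylow $p$-subgroup forces a section $S\unlhd A\leq\Aut(S)$ with $S$ non-abelian simple, a non-abelian Sylow $p$-subgroup of $A$, and $A$ still satisfying the class-size hypothesis. This reduction to almost simple groups, where Clifford theory and transfer arguments do the real work and where one has to make sure that the vanishing of $p$ in all $p$-element class sizes genuinely descends to the almost simple section rather than being an artefact of the embedding, is the main conceptual obstacle.

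It then remains to verify, for every almost simple $A$ with socle $S$, that a non-abelian Sylow $p$-subgroup of $A$ produces a $p$-element $x$ with $p\mid|x^A|$, the only failures occurring at $p\in\{3,5\}$, which is precisely why those primes are excluded. This splits along the classification. For $S=\mathrm{Alt}_n$ one determines exactly when a Sylow $p$-subgroup is abelian (only for bounded $n$) and otherwise exhibits a suitable $p$-element, a product of $p$-cycles, whose centralizer has $p$-part strictly smaller than $|A|_p$, the small degrees being checked directly as in Lemma~\ref{Order}. For the sporadic groups one reads the class sizes off the known character tables, and this is where the $\{3,5\}$ exceptions arise. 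For groups of Lie type over $\mathbb{F}_q$ of characteristic $t$ one separates two cases: if $p=t$, then apart from $L_2(q)$ and a short list of small exceptions, whose Sylow $p$-subgroup is abelian, a regular unipotent element $x$ works, since $|A:C_A(x)|$ is then divisible by $p$ (cf.\ Lemma~\ref{Regular}); if $p=r\neq t$, the structure of a Sylow $r$-subgroup is governed by $d=e(r,q)$ and the corresponding $d$-torus, being abelian essentially when $\varphi(r,S)>\prk(S)/2$ (in which case it is cyclic by Lemma~\ref{vas}) and otherwise carrying a wreath-product structure, and in the non-abelian case a regular semisimple $r$-element, or a product of such an element with a unipotent one, has class size divisible by $r$. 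Assembling these cases yields the statement; the Lie type analysis, in both the defining and non-defining characteristic cases, is the main computational burden.
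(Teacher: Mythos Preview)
The paper does not prove this lemma at all: it is quoted verbatim from Navarro and Tiep \cite{navarro} and is used as a black box in the proofs of Lemmas~\ref{hz3} and~\ref{hz2}. There is therefore no ``paper's own proof'' to compare your proposal against.

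That said, your outline is a faithful sketch of the strategy actually used in \cite{navarro}: the forward implication is elementary, and the converse is obtained by a minimal-counterexample reduction (via $O_{p'}(G)$ and the generalized Fitting subgroup) to almost simple groups, followed by a case analysis along the classification, with the exceptional primes $3$ and $5$ surfacing in the sporadic and small Lie-type cases. Two small caveats: first, your reduction paragraph stops short of the actual argument---passing from ``quotients and normal subgroups inherit the hypothesis'' to ``we may assume $G$ is almost simple with non-abelian Sylow $p$-subgroup'' requires more than you have written, in particular control of how a non-abelian $P$ can sit over an abelian $P\cap F^*(G)$; second, in the Lie-type non-defining-characteristic case the decisive computation is not quite that $\varphi(r,S)>\prk(S)/2$ forces cyclicity (Lemma~\ref{vas} gives a cyclic Hall subgroup of order $k_i(q)$, not the full Sylow $r$-subgroup), so the precise bookkeeping of when the Sylow $r$-subgroup is abelian is a bit more delicate than your sketch suggests. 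None of this is a genuine error in strategy, only an indication that what you have is an outline rather than a proof; for the purposes of the present paper, citing \cite{navarro} is what is expected.
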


\begin{lem}\label{hz3}
Let $S\leq A\leq Aut(S)$, where $S$ is a simple group of Lie type, $H\in Syl_p(A)$, where $p>5$ is a prime number. If for every $\alpha \in N(A)$ we have $|\alpha|_p \in \{1,|H|\}$, then $|H|$ divides $S$ or $|H|$ divides $A/S$.
\end{lem}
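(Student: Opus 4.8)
The plan is to deduce from the hypothesis that the Sylow $p$-subgroup $H$ of $A$ is abelian, and then to use the structure of $\Out(S)$ together with the classification of groups of Lie type having abelian Sylow $p$-subgroups in order to exclude the case in which $p$ divides both $|S|$ and $|A/S|$. Note first that for any nontrivial $p$-element $x$ of $A$ one has $\langle x\rangle\leq C_A(x)$, so $p$ divides $|C_A(x)|$ and hence $|x^A|_p=|A|_p/|C_A(x)|_p\leq|A|_p/p<|A|_p=|H|$; since $|x^A|\in N(A)$, the assumption $|x^A|_p\in\{1,|H|\}$ now forces $|x^A|_p=1$. Thus every $p$-element of $A$ has class size coprime to $p$, and since $p>5$ Lemma \ref{navarro} yields that $H$ is abelian; in particular the Sylow $p$-subgroup $H_0=H\cap S$ of $S$ is abelian.

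Since $|H|=|H_0|\cdot|HS/S|$ and $HS/S$ is a Sylow $p$-subgroup of $A/S\leq\Out(S)$, the conclusion of the lemma is equivalent to the assertion that $H_0=1$ or $HS/S=1$, so assume for contradiction that $p$ divides both $|S|$ and $|A/S|$. The group $\Out(S)$ is an extension of the group of diagonal automorphisms (of order dividing $4$, unless $S\cong A_n^{\varepsilon}(q)$, where it has order $(n+1,q-\varepsilon)$) by the groups of field and graph automorphisms (the latter of order dividing $6$). As $p>5$, either $S\cong A_n^{\varepsilon}(q)$ with $p\mid(n+1,q-\varepsilon)$, or $p$ divides the order of the group of field automorphisms, so that $q=t^m$ with $t$ the defining characteristic and $p\mid m$.

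In the first case $p\mid n+1$ gives $\prk(S)=n+1\geq p$, while $p\mid q-\varepsilon$ gives $\varphi(p,S)=e(p,\varepsilon q)=1$, hence $\lfloor\prk(S)/\varphi(p,S)\rfloor=n+1\geq p$; by the usual description of the Sylow subgroups of classical groups (an iterated wreath product whose top group is a Sylow $p$-subgroup of the Weyl group $S_{n+1}$) this forces $H_0$ to be non-abelian, a contradiction. In the second case choose a field automorphism $\phi\in A$ of order $p$; then $C_S(\phi)$ is a group of the same Lie type as $S$ over the field of order $q_0=q^{1/p}$. Since $\langle\phi\rangle$ is a $p$-group permuting the Sylow $p$-subgroups of $S$, whose number is $\equiv1\pmod p$, we may take $H_0$ to be $\phi$-invariant; then $H_0\langle\phi\rangle$ is a $p$-subgroup of $A$, hence abelian, so $\phi$ centralizes $H_0$ and $H_0\leq C_S(\phi)$, giving $|S|_p=|H_0|\leq|C_S(\phi)|_p$. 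But $|S|_p>|C_S(\phi)|_p$ whenever $p$ divides $|S|$: writing $e=e(p,q_0)$ (so $p\nmid e$, since $e\mid p-1$), among the cyclotomic values $\Phi_d(q_0)$ occurring in the factorization of $|C_S(\phi)|$ the prime $p$ divides only those with $d=ep^{j}$ for some $j\geq0$, and replacing $q_0$ by $q=q_0^{p}$ multiplies the $p$-part of the factor indexed by $e$ by $p$ while leaving those indexed by $ep^{j}$, $j\geq1$, unchanged; when $p$ equals the characteristic the bound is even more immediate, since then $H_0$ abelian forces $S\cong L_2(q)$ and $|C_S(\phi)|_p=q^{1/p}<q=|H_0|$. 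In either case we reach a contradiction, which proves the lemma.

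The step I expect to be the main obstacle is the second case above: one has to verify, uniformly over all Lie types — including the twisted groups ${}^2A_n(q)$, ${}^2D_n(q)$, ${}^3D_4(q)$, ${}^2E_6(q)$, ${}^2B_2(q)$, ${}^2G_2(q)$, ${}^2F_4(q)$ and graph–field automorphisms — both the structure of the centralizer of a field automorphism of prime order and the resulting $p$-part inequality, and to deal with the finitely many small-rank and exceptional groups where the generic formulas may fail. One also uses the standard fact that a group of Lie type of characteristic $t$ has abelian Sylow $p$-subgroups, $p\neq t$, precisely when $\lfloor\prk(S)/\varphi(p,S)\rfloor<p$.
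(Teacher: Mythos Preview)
Your opening step---showing that every $p$-element of $A$ has class size coprime to $p$ and hence, by Lemma~\ref{navarro}, that $H$ is abelian---agrees with the paper. The contradiction from ``$p$ divides both $|S|$ and $|A/S|$'' is obtained differently. The paper takes an arbitrary $p$-element $x\in A$ inducing an outer automorphism, notes (as you do) that abelianness of $H$ forces a Sylow $p$-subgroup of $S$ into $C_S(x)$, and then invokes \cite{VV} together with the known structure of centralizers of outer automorphisms to locate a prime $t\in\pi(C_S(x))$ not adjacent to $p$ in $GK(C_S(x))$; an element $y\in C_S(x)$ of order $t$ with $p\nmid|C_{C_S(x)}(y)|$ then yields $|S|_p\le(Ind_A(xy))_p<|A|_p$, violating the hypothesis directly, with no diagonal/field split needed. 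Your route instead extracts two separate contradictions from abelianness: in the diagonal case the wreath-product description of Sylow $p$-subgroups of $A_n^{\varepsilon}(q)$ makes $H_0$ non-abelian outright; in the field case you force $H_0\le C_S(\phi)$ and then contradict $|S(q)|_p>|S(q^{1/p})|_p$. One point to tighten: you choose a genuine field automorphism $\phi\in A$, but $A$ is only known to contain a $p$-element whose image in $\Out(S)$ projects nontrivially onto the field part; such an element may differ from a standard field automorphism by an inner--diagonal factor, and you must invoke the standard conjugacy of such elements to field automorphisms before your order comparison applies. Modulo this and the case-checking you already flag, the argument is correct; the trade-off is your explicit cyclotomic bookkeeping versus the paper's single prime-graph appeal.
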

\begin{proof}
Lemma \ref{Order} imply that $|A|=|A||$. In particular $|A||_p=|A|_p$.
Therefor $(Ind_A(x))_p=1$ for any $p$-element $x$. It follows from Lemma \ref{navarro} that the Sylow $p$-subgroups of $A$ is abelian.
Assume that $p$ divides $|S|$ and $|A/S|$. Take a $p$-element $x\in A$ such that $x$ acts on $S$ as an outer automorphism. We have $C_S(x)$ includes a some Sylow $p$-subgroup of $S$. The graph $GK(C_S(x))$ contains a vertex $t$ such that $t$ is not adjacent to $p$ see \cite{VV} and a description of the centralizers of outer automorphisms of groups of Lie type. Hence, there is an element $y\in C_S(x)$ of order $t$ such that $ (Ind_{C_S(x)}(y))_ p=|C_S(x)|_p$. Thus, $|S|_p\leq(Ind_A(xy))_p<|A|_p$; a contradiction.

\end{proof}

\begin{lem}\label{hz5}
Let $S\leq A\leq Aut(S)$, where $S\simeq S(q)$ is a simple group of Lie type over a field of order $q=p^n$ where $p$ is prime, $H$ is a Hall $\pi$-subgroup of $A$, where $\pi\subseteq\pi(A)$ and $\{2,3,5\}\cap\pi=\varnothing$. If for every $\alpha \in N(A)$ we have $|\alpha|_{\pi} \in \{1,|H|\}$, then $|H|$ divides $S$ or $|H|$ divides $A/S$.

\end{lem}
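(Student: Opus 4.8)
The plan is to reduce the Hall $\pi$-subgroup statement to the prime-by-prime case handled by Lemma \ref{hz3}. First I would invoke Lemma \ref{Order} to get $|A|=|A||$, so that in particular $|A||_\pi=|A|_\pi$ and hence $(\mathrm{Ind}_A(x))_\pi\in\{1,|H|_{\pi}/|H|_{\pi}\dots\}$ — more precisely, the hypothesis $|\alpha|_\pi\in\{1,|H|\}$ for all $\alpha\in N(A)$ forces, for each individual prime $r\in\pi$, that $(\mathrm{Ind}_A(x))_r\in\{1,|H|_r\}$ for every $r$-element $x$. (If some $r$-element had $1<(\mathrm{Ind}_A(x))_r<|H|_r$, then $|x^A|_\pi$ would be a proper nonzero power of $r$ times possibly other primes of $\pi$, but since $|A|_\pi=|A||_\pi$ no class size can be divisible by $|A|_\pi$ properly, and one checks the $\pi$-part then lands strictly between $1$ and $|H|$, contradicting the hypothesis.) Thus the single-prime hypothesis of Lemma \ref{hz3} holds for each $r\in\pi$, and since every $r\in\pi$ satisfies $r>5$, Lemma \ref{hz3} gives: for each $r\in\pi$, either $|H|_r=|A|_r$ divides $|S|$ or it divides $|A/S|$.

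Next I would argue that the dichotomy is uniform across all of $\pi$, i.e. one cannot have $r,s\in\pi$ with $|A|_r$ dividing $|S|$ but $|A|_s$ dividing $|A/S|$ (with $r,s$ both genuinely present). Here is the key point: by Lemma \ref{navarro} the Sylow $r$- and Sylow $s$-subgroups of $A$ are abelian (the hypothesis gives $(\mathrm{Ind}_A(x))_r=1$... no — rather $(\mathrm{Ind}_A(x))_{\{r,s\}}\in\{1,|H|_{\{r,s\}}\}$, and one extracts abelianness as in Lemma \ref{hz3}). More usefully, I would apply Lemma \ref{pat}: if $r$ divides $|A/S|$ then there is an $r$-element inducing an outer automorphism, and as in the proof of Lemma \ref{hz3} its centralizer in $S$ contains a full Sylow $s$-subgroup of $S$ together with an element of order coprime to $s$ whose class has $s$-part the full $|S|_s$; running the same centralizer-of-outer-automorphism argument simultaneously for the pair $\{r,s\}$ produces a class whose $\{r,s\}$-part is strictly between $1$ and $|H|_{\{r,s\}}$ unless $|S|_s$ and the outer contribution combine correctly — forcing $s$ to also divide $|A/S|$, or else $|A|_s$ must divide $|S|$ entirely with $r$ not splitting $S$. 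I would organize this as: let $\pi_1=\{r\in\pi: |A|_r \text{ divides } |S|\}$ and $\pi_2=\pi\setminus\pi_1$; show $\pi_2\subseteq\{r:|A|_r \text{ divides }|A/S|\}$ by Lemma \ref{hz3}, and then show that if both $\pi_1$ and $\pi_2$ are nonempty one can pick $r\in\pi_1,s\in\pi_2$ and build, via Lemma \ref{Regular} applied inside $C_S(x)$ for an $s$-element $x$ outer on $S$, an element $g h$ with $(\mathrm{Ind}_A(gh))_{\{r,s\}} = |S|_r\cdot(\text{something})_s$ lying strictly between $1$ and $|H|_{\{r,s\}}$, contradicting $A\in\{r,s\}^*$-type hypothesis.

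Once the dichotomy is shown uniform, we conclude: either $|A|_r$ divides $|S|$ for all $r\in\pi$, whence $|H|=|A|_\pi=\prod_{r\in\pi}|A|_r$ divides $|S|$; or $|A|_r$ divides $|A/S|$ for all $r\in\pi$, whence $|H|$ divides $|A/S|$. This is exactly the claimed statement.

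The hard part will be the second paragraph: showing the splitting $\pi=\pi_1\sqcup\pi_2$ forces one part to be empty. The obstruction is that Lemma \ref{hz3} is genuinely a one-prime statement and says nothing about how different primes interact with the normal subgroup $S$; bridging that gap requires a careful simultaneous analysis of centralizers of outer automorphisms (diagonal, field, and graph automorphisms of the various Lie types), using the adjacency information from \cite{VV} for each relevant prime and checking that the $\{r,s\}$-part of the resulting index cannot accidentally equal $|H|_{\{r,s\}}$. One should also dispose of the small-rank or small-$q$ exceptional configurations (e.g. where $|A/S|$ is a $2$- or $3$-group and so $\pi_2$ is automatically empty, or where Zsigmondy primes behave exceptionally) by a direct check, presumably citing \cite{GAP} for the finitely many genuinely small cases, exactly as was done in the proof of Lemma \ref{Order}.
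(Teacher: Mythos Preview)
Your overall strategy is the same as the paper's: reduce to one prime at a time via Lemma~\ref{hz3}, then rule out a ``mixed'' configuration with some $a\in\pi$ dividing $|S|$ and some $b\in\pi$ dividing $|A/S|$ by producing an element whose index has $\pi$-part strictly between $1$ and $|H|$. The paper's proof of this second step, however, is five lines, and the case analysis you anticipate (diagonal, field, and graph automorphisms; small-rank or small-$q$ exceptions; a GAP check) is entirely unnecessary.

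The simplification you are missing is this: once Lemma~\ref{hz3} tells you $|A|_b$ divides $|A/S|$, it follows that $b\nmid |S|$. Since $b>5$, a $b$-element $x\in A\setminus S$ cannot act as a graph automorphism (those have order $2$ or $3$) and cannot act as a diagonal automorphism (the diagonal-automorphism group has order dividing $|S|$). Hence $x$ acts as a \emph{field} automorphism, and $C_S(x)\cong S(p^m)$ is again a simple group of Lie type over a subfield. Now one invokes \cite{VV} once: pick any vertex $r$ of $GK(C_S(x))$ not adjacent to $a$, take $y\in C_S(x)$ of order $r$, and consider $xy$. Because the Sylow $b$-subgroup of $A$ is abelian and contains $x$, we have $(\mathrm{Ind}_A(xy))_b<|A|_b$; because no $a$-element of $C_S(x)$ centralizes $y$, we have $(\mathrm{Ind}_A(xy))_a>1$. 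Hence $1<(\mathrm{Ind}_A(xy))_\pi<|H|$, the desired contradiction. (If $a\notin\pi(C_S(x))$ to begin with, then already $(\mathrm{Ind}_A(x))_a=|A|_a$ while $(\mathrm{Ind}_A(x))_b=1$, and $x$ alone gives the contradiction.)

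So your plan is correct in outline, but you should replace the proposed case-by-case analysis of outer automorphism types and the GAP fallback with the single observation that $b\nmid |S|$ forces $x$ to be a field automorphism; the argument then goes through uniformly with no exceptional cases.
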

\begin{proof}
It follows from Lemma \ref{hz3} that for any $t\in\pi$, $|A|_t$ divide one of the numbers $|S|$ or $|A/S|$. Assume that there exists numbers $a, b\in\pi$ such that $a$ divides $|S|$ and $b$ divides $|A/S|$. Take a $b$-element $x\in A$. Since $b$ does not divide $|S|$, we see that $x$ acted on $S$ as a fields automorphism of $S$. We have $C_S(x)\simeq S(p^m)$, where $m$ is a non trivial divisor of $n$. The graph $GK(C_S(x))$ contains a vertex $r$ that is not adjacent to $a$ see \cite {VV}. Hence, there exists an element $y\in C_S(x)$ such that $(Ind_{C_S (x)} (y))_t =|C_S (x)|_t$ and $1<(Ind_A (xy))_{\pi}<|H|$; a contradiction.
\end{proof}

\begin{lem}\label{hz2}
Let $L$ be a group of Lie type over a field of order $q$ and characteristic $p$, $H\in Syl_t(L)$, where $t>5$ is prime. If for any $\alpha\in N(L)$ we have $|\alpha|_t\in \{1,|H|\}$ then $H$ is a cyclic or $L\simeq A_1(t^n)$ where $n>1$.
\end{lem}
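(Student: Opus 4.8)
\emph{Proof plan.} We may assume $L$ is nonabelian simple and that $t$ divides $|L|$, since otherwise $H$ is trivial and cyclic. The first step, uniform in $t$, is to reduce to the situation where $H$ is abelian. Let $x$ be a nontrivial $t$-element of $L$; as $L$ is simple, $Ind_L(x)>1$, so $Ind_L(x)\in N(L)$ and the hypothesis gives $(Ind_L(x))_t\in\{1,|H|\}$. If $(Ind_L(x))_t=|H|=|L|_t$, then $|C_L(x)|_t=1$, contradicting $1\neq x\in C_L(x)$. Hence $|x^L|_t=1$ for every $t$-element $x$, and since $t>5$ Lemma \ref{navarro} shows that $H$ is abelian.

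\emph{The case $t=p$.} Now the $t$-elements of $L$ are precisely the unipotent ones. Choose a regular unipotent element $u$ of $L$ (such elements exist; Lemma \ref{Regular} applied with $G=L$ already yields a unipotent element with $p$-group centralizer). Since $p=t>5$, $p$ is a good prime for $L$, so $|C_L(u)|_p=q^{\ell}$, where $\ell$ is the rank of the underlying algebraic group, while $|L|_p=q^{N}$ with $N$ the dimension of the unipotent radical of a Borel subgroup (see \cite{Carter}). By the previous paragraph $(Ind_L(u))_p=q^{N-\ell}=1$, hence $N=\ell$; inspecting the possibilities, this forces the root system of $L$ to be of type $A_1$, i.e. $L\simeq A_1(q)$ with $q=p^n=t^n$. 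Then $H$ is elementary abelian of order $t^n$, which is cyclic if $n=1$ and is precisely the exceptional case $L\simeq A_1(t^n)$, $n>1$, otherwise.

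\emph{The case $t\neq p$.} Here $t$ is a cross-characteristic prime; put $d=e(t,q)$, and recall $H$ is abelian. If $L$ is classical with $\prk(L)=n\geq4$ and $n/2<\varphi(t,L)\le n$, then by Lemma \ref{vas} the group $L$ has a cyclic Hall subgroup of order $k_d(q)$; since $t\in R_d(q)$ divides $k_d(q)$, this Hall subgroup contains a Sylow $t$-subgroup of $L$, so $H$ is cyclic. In the remaining configurations (the low-rank classical groups, the classical groups with $\varphi(t,L)\leq n/2$, and the exceptional groups) the $\Phi_d$-torus of $L$ has multiplicity $m\geq2$. Assuming $H$ is not cyclic, I would produce a semisimple element $y$ whose centralizer $C_L(y)$ is a proper reductive subgroup of maximal rank of $\Phi_d$-multiplicity $m'$ with $1\leq m'<m$: for linear and unitary groups by prescribing a preimage of $y$ whose characteristic polynomial has a single irreducible factor of degree a multiple of $d$ and then passing from the general to the special group (where a factor of the $t$-part is lost); for symplectic and orthogonal groups by the analogous choice of maximal torus; for exceptional groups by placing $y$ regular in a maximal torus $T_w$, $w\in W(L)$, whose order contains $\Phi_d(q)$ with multiplicity strictly between $1$ and $m$. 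For such $y$ one has $t\mid|C_L(y)|$ but $|C_L(y)|_t<|H|$, so $1<(Ind_L(y))_t<|H|$, contradicting the hypothesis; therefore $H$ is cyclic.

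\emph{Main difficulty.} The first two cases are short; the work lies in the last one, where the semisimple element $y$ must be exhibited explicitly. This forces a passage family by family through the simple classical groups --- the only genuinely delicate bookkeeping being the difference between the general and the special linear/unitary/symplectic/orthogonal groups and the influence of the centre --- together with the low-rank groups to which Lemma \ref{vas} does not apply and each exceptional type for the primes $t>5$ whose $\Phi_d$-torus already has multiplicity at least $2$; Lemmas \ref{kqi} and \ref{vas} together with Zsigmondy's lemma keep this bounded.
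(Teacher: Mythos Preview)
Your plan is sound and its first three moves coincide with the paper's proof: the reduction to abelian Sylow $t$-subgroups via Lemma~\ref{navarro}, the disposal of the case $t=p$ by forcing $L\simeq A_1(q)$ (the paper states this as the known equivalence ``Sylow $p$-subgroup of $L$ abelian $\Leftrightarrow L\simeq A_1(q)$'' rather than recomputing with a regular unipotent, but the content is the same), and the appeal to Lemma~\ref{vas} when $\varphi(t,L)>n/2$.

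The divergence is in the remaining case, and it is genuine. You aim to exhibit directly a semisimple $y$ with $1<(Ind_L(y))_t<|H|$, i.e.\ a maximal torus whose $\Phi_d$-multiplicity is strictly between $1$ and $m$, and then read off the contradiction from the hypothesis. The paper does something different and slightly more indirect: it chooses an auxiliary prime $a$ with $\varphi(a,L)$ complementary to $\varphi(t,L)$, takes $h$ of order $r\cdot k$ with $r=|k_{\varphi(t,L)}(q)|_t$ and $k=|k_{\varphi(a,L)}(q)|_a$, observes that $(Ind_L(h^r))_t<|L|_t$ and hence $=1$ by hypothesis, deduces that \emph{every} $t$-element has a conjugate of $h^r$ in its centraliser, and then reaches a contradiction on the cyclic structure of maximal tori via \cite{GrBu}: a $t$-element sitting in a torus of type $\varphi(t,L)+\varphi(t,L)$ would have to share a maximal torus with an element forcing an extra $\varphi(a,L)$-block, which is impossible. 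For exceptional groups the paper simply invokes \cite{Der}.

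What each approach buys: your route is conceptually cleaner and uniform across families, but the ``main difficulty'' you flag is real --- you must actually produce $y$ type by type (including the centre/isogeny bookkeeping you mention for $A_n^\varepsilon$, and a torus-by-torus check for each exceptional type and each relevant $d$). The paper's route avoids that enumeration by trading it for a single torus-structure argument in the classical case plus a one-line citation for the exceptional case; the price is that the torus argument (why the common maximal torus of $x$ and $y$ must have type $\varphi(t,L)+\varphi(t,L)+\varphi(a,L)$) is itself somewhat delicate and leans heavily on \cite{GrBu}. Either line can be completed; yours will be longer but more self-contained.
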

\begin{proof}
Lemma \ref{Order} implies that $|L|=|L||$. Therefor $(Ind_L(x))_t=1$, for any $t$-element $x$. It follows from \ref{navarro} that a Sylow $t$-subgroup of $L$ is abelian. A Sylow $p$-subgroup of $L$ is abelian iff $L\simeq A_1(q)$. Note that if $t\neq p\in\pi(A_1(q))\setminus\{2\}$, then the Sylow $t$-subgroup of $A_1(q)$ is cyclic. We assume that $L\not\simeq A_1(q)$, and hence $p\neq t$.

Assume that $L$ is a classical group. From the description of the cyclic structure of tors of the finite simple groups see \cite{GrBu}, it follows that $L$ contains an element of order $k_{\varphi(t, L)}(q)$. If $\varphi(t,L)>n/2$, then from Lemma \ref{vas} it follows that the $t$-subgroup of $L$ is cyclic. Assume that $\varphi(t, L)\leq n/2$. Then $|L|_t>|k_{\varphi(t,L)}(q)|_t$. Let $r=|k_{\varphi(t,L)}(q)|_t$, $k=|k_{\varphi(a,L)}(q)|_a$, where $a$ is a prime number such that $\varphi(a, L)=n/2-1$ if $\varphi(t, L)=n/2$ and $\varphi(a,L)= n-\varphi(t, L)$ otherwise.
From the description of the spectra of finite simple groups it follows that $L$ contains an element $h$ of order $r.k$. Therefor $(Ind_L(h^r))_t<|L|_t$, and hence $(Ind_L(h^r))_t=1$. Thus, the centralizer of any $t$-elements contains an element conjugate to $h^r$. Let $T<L$ be a tors having the cyclic structure $\varphi(t, L) + \varphi (t, L)$, $x\in T$ is a $t$-element, $y \in C_L(x)\cap(h^r)^L$. Since $t, a\neq p$, then in $L$ concludes a maximal torus $X$ such that $xy\in X$. Hence $X$ has the cyclic structure $\varphi(t,L) + \varphi(t, L) + \varphi(a, L)>n$. It follows from \cite{GrBu} that $L$ contains no torus with such a cyclic structure; a contradiction.

If $L$ is an exceptional group of Lie type, then the lemma follows from \cite{Der}.

\end{proof}

\begin{lem}\label{hz4}
Let $L$ be a simple group, $L<G\leq Aut(L)$, $H\in Syl_t(L)$, where $t>5$ is prime. If $|H|$ divide $|G/L|$ then $H$ is cyclic.
\end{lem}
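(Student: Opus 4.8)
The plan is to turn the hypothesis into an arithmetic inequality and show that it forces $H=1$, which in particular makes $H$ cyclic. Since $L$ is simple and $L\leq G\leq\Aut(L)$, we have $L\unlhd G$ and $G/L$ embeds into $\Out(L)$, so $|H|=|L|_t$ divides $|G/L|$ and hence divides $|\Out(L)|$. If $L$ is alternating or sporadic, then $|\Out(L)|$ divides $4$, so the $t$-power $|H|$ (with $t>5$) must equal $1$; thus we may assume $L=L(q)$ is a simple group of Lie type over a field of order $q=p^r$. Moreover $t\neq p$: the $p$-part of $|L|$ is $q$ raised to the number of positive roots, hence at least $q=p^r>r$, while the diagonal and graph parts of $\Out(L)$ are prime to $p$ and its field part is cyclic of order $r$, $2r$ or $3r$, so $|\Out(L)|_p\leq r_p<|L|_p$.

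The one arithmetic input I need is a lifting-the-exponent estimate. Writing $e_1=e(t,p)$, one has $e:=e(t,q)=e_1/\gcd(e_1,r)$, and since $e_1\mid t-1$ is prime to $t$, the exponent $v_t$ of $t$ in $q^{e}-1$ equals $v_t(p^{e_1}-1)+v_t(r)\geq 1+v_t(r)$; denote this exponent by $\mu$ (and, when $L$ is linear or unitary, let $\mu$ denote instead the exponent of $t$ in $(\varepsilon q)^{e(t,\varepsilon q)}-1$, which the same computation shows is $\geq 1+v_t(r)$). Since the set of $i$ for which $\Phi_i(q)$ divides $|L|$ is closed under taking divisors, and $t\mid\Phi_i(q)$ only when $i$ is a $t$-power multiple of $e$, the hypothesis $t\mid|L|$ forces $\Phi_e(q)$ to divide $|L|$, so $v_t(|L|)$ is at least $v_t(\Phi_e(q))$ up to the exponent of $t$ in the order of the fundamental group. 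If $L$ is neither linear nor unitary, that fundamental group has order dividing $4$ and the graph part of $\Out(L)$ has order dividing $6$, so $v_t(|\Out(L)|)=v_t(r)$ while $v_t(|L|)\geq v_t(\Phi_e(q))=\mu\geq 1+v_t(r)$; this contradicts $|L|_t\mid|\Out(L)|$, so in fact $t\nmid|L|$ and $H=1$.

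There remains the genuinely delicate case $L=A^{\varepsilon}_n(q)$, where the diagonal part of $\Out(L)$ has order $d=\gcd(n+1,q-\varepsilon)$ with possibly large $t$-part, so a crude size comparison no longer suffices and one must compute $v_t(|L|)$ exactly. From $|SL^{\varepsilon}_{n+1}(q)|=q^{\binom{n+1}{2}}\prod_{i=2}^{n+1}(q^{i}-\varepsilon^{i})$ and lifting the exponent, put $e=e(t,\varepsilon q)$ and $m=\lfloor(n+1)/e\rfloor\geq 1$. If $e\geq 2$ then $t\nmid q-\varepsilon$, so $v_t(d)=0$ and $v_t(|L|)\geq m\mu\geq 1+v_t(r)>v_t(r)=v_t(|\Out(L)|)$, a contradiction. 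If $e=1$ we may assume $t\mid n+1$ (otherwise $v_t(d)=0$ again), so $n\geq t-1\geq 6$; here $v_t(|L|)=n\mu+v_t((n+1)!)-v_t(d)$ with $\mu=v_t(q-\varepsilon)\geq 1+v_t(r)$ and $v_t(d)=\min(v_t(n+1),\mu)\leq\mu$, hence $v_t(|L|)\geq(n-1)\mu\geq(n-1)(1+v_t(r))$, which exceeds $v_t(d)+v_t(r)\leq v_t(n+1)+v_t(r)$ because $v_t(n+1)\leq\log_t(n+1)<n-1$ for $n\geq 6$; again this contradicts $|L|_t\mid|\Out(L)|$. In every case $t\nmid|L|$, so $H=1$, which is cyclic.

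The main obstacle is precisely this linear/unitary case: for all other simple groups the outer automorphism group is so small relative to $|L|_t$ that a one-line comparison finishes the proof, but when diagonal automorphisms are present one needs the exact exponent of $t$ in $|L|$ — hence the lifting-the-exponent bookkeeping — to see that it still exceeds $|\Out(L)|_t$. A few routine points require care without affecting the conclusion: the twisted groups must be handled with their correct field-automorphism orders, and for very small $q$ the exceptions in Zsigmondy's lemma should be kept in mind; in all such instances either the inequalities above still hold or $t$ simply fails to divide $|L|$.
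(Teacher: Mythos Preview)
Your argument is correct for the statement as literally written, and it actually proves the stronger conclusion $H=1$. But you should be aware that the lemma almost certainly contains a misprint: the paper's own proof speaks of $H$ as ``a subgroup of the group of field automorphisms'', which is nonsense for $H\leq L$, and the applications of this lemma later in the paper (combined with Lemma~\ref{hz2}) are used to conclude that a Sylow $t$-subgroup of $\overline{G}$, not of $S$, is cyclic. The intended hypothesis is evidently $H\in\mathrm{Syl}_t(G)$.

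Under that intended reading the paper's proof is essentially two lines. From $|H|=|G|_t\mid |G/L|$ one gets $|L|_t\cdot|G/L|_t\mid|G/L|_t$, so $|L|_t=1$; hence $H$ embeds into $G/L\leq\Out(L)$. If $L$ is alternating or sporadic then $|\Out(L)|\leq 4$ and $H=1$. Otherwise $L$ is of Lie type; since $t\nmid|L|$ the diagonal part of $\Out(L)$ is prime to $t$, and since $t>5$ so is the graph part, so $H$ sits inside the cyclic group of field automorphisms.

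Your route is genuinely different: you take the literal hypothesis $|L|_t\mid|\Out(L)|$ and use a lifting-the-exponent comparison to force $|L|_t=1$. This is harder, because here $|L|_t=1$ is not given for free. What your computation buys is that it supplies, in the $A^{\varepsilon}_n$ case, exactly the step the paper's short proof silently uses (namely that $t$ dividing the diagonal part of $\Out(L)$ already forces $t\mid|L|$). So your argument is self-contained where the paper's is not.

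One small inaccuracy that does not affect your conclusion: for $E_6(q)$ and ${}^2E_6(q)$ the order of the diagonal part divides $3$, not $4$; since $t>5$ this is harmless. Also, when you assert that the set of $i$ with $\Phi_i(q)\mid|L|$ is closed under divisors, this is true for the order polynomials of all finite groups of Lie type but deserves a word of justification rather than a bare claim.
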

\begin{proof}
Since $t>5$, we see that $L$ is group of Lie type. From the fact that $t$ does not divide $|L|$, it follows that $H$ is a subgroup of the group of field automorphisms. The group of field automorphisms of the group $L$ is cyclic and hence $H$ is cyclic.
\end{proof}

\begin{lem}\cite[Corollary 13]{Ah132}\label{Neda} Let $r \in R_{2(n-1)}(q), r_1\in R_n(q), r_2\in R_{n-1}(q)$ and $r_3 \in R_{2(n-2)}(q)$, $L=D_n(q)$, $d=(q^n-1,4)$.

(i) If $x\in L\setminus \{1\}$, then either $(Ind_L(x))_r = |L|_r$ or $(Ind_L(x))_r<|L|_r$
and one of the following holds:

\begin{itemize}

           \item{$Ind_L(x)=|L|d/(|GU_{(n-1)/m}(q^m)|(q+1))$, where $(n-1)/m$
is an odd number and if $q=3$ and $n$ is an even number, then $m\neq 1$;}
           \item{$Ind_L(x) = |L|d/(\alpha|GU_n(q)|)$ , where $n$ is an even number and, if $q=3$, then $\alpha=2$,
if $q\neq3$ and $(q+1)_2>2$, then $\alpha\in{1, 2}$ and otherwise, $\alpha=1$;}
           \item{$(q, n)\neq(3, 2u)$ $(u\in \mathbb{N})$ and $Ind_L(x) = 2|L|d/(|GO^-_{2(n-1)}(q)|(q+1))$;}

            \item{$q\equiv-1(mod 4)$ and $Ind_L(x)=|L|d/(|GO^-_{2(n-1)}(q)|(q+1))$;}

         \end{itemize}

(ii) if $n$ is odd and $x_1\in L\setminus\{1\}$, then $(Ind_L(x_1))_{r_1}=|L|_{r_1}$ or $(Ind_L(x_1))_{r_1}<|L|_{r_1}$ and there exists a divisor $m_1$ of $n$ such that $Ind_L(x_1)=|L|d/(|GL_{n/(m_1)}(q^{m_1})|$.
In particular, if $q\in\{2, 3, 5\}$, then $m_1\neq1$;

(iii) if $n$ is even and $x_2\in L\setminus\{1\}$, then $(Ind_L(x_2))_{r_2}=|L|_{r_2}$ or $(Ind_L(x_2))_{r_2}<|L|_{r_2}$ and one of the following holds:
\begin{itemize}
\item{ $Ind_L(x_2)=|L|d/(|GL_{(n-1)/m_2}(q^{m_2})|(q-1))$, where $m_2$ is a divisor of $n-1$. In particular, if $q\in\{2, 3, 5\}$, then $m_2\neq1$;}
\item{ $q\not\in\{2, 3\}$ and $Ind_L(x_2)=|L|d/(\alpha|GL_n(q)|)$, where if $(q-1)_2 > 2$ and $q\neq5$, then $\alpha\in\{1, 2\}$, if $q=5$, then $\alpha=2$ and otherwise, $\alpha = 1$;}
\item{ $q\not\in\{2, 3, 5\}$ and $Ind_L(x_2)=2|L|d/(|GO^+_{2(n-1)}(q)|(q-1))$;}

\item{ $q\equiv 1(mod 4)$ and $Ind_L(x_2)|=|L|d/(|GO^+_{2(n-1)}(q)|(q-1))$;}
\end{itemize}

(iv) if $n\geq6$ is even and $x_3\in G\setminus\{1\}$, then either $(Ind_L(x_3))_{r_3} = |L|_{r_3}$ or $Ind_L(x_3)|_{r_3}<|L|_{r_3}$ and one of the following holds:
\begin{itemize}
\item{$Ind_L(x_3)=\alpha|L|d/(|GO^{\varepsilon}_{2(n-1)}(q)|(q-\varepsilon1))$, where $\alpha | 2$ under conditions mentioned in $(i,iii)$ and $\varepsilon\in\{+, -\}$;}
    \item{ $Ind_L(x_3)=|L|d/(\alpha\beta)$, for some divisor $\alpha$ of $|\Omega^-_4(q)|$ and $\beta\in\{|GU_{(n-2)/m_3}(q^{m_3})|,|GO^-_{2(n-2)}(q)|,|GO^-_{2(n-2)}(q)|/2\}$, where $(n-2)/m_3$
is an odd number;}
\item{$Ind_L(x_3) = |L|d/q^{2(n-1)}|SP_{2(n-2)}(q)|$;}
\item{$Ind_L(x_3)= |L|d/|GU_{n/2}(q^2)|$, where $n/2$ is an even number.}
\end{itemize}
\end{lem}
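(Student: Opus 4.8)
The statement is a complete catalogue of the ``non‑generic'' conjugacy‑class sizes of $L=D_n(q)=P\Omega^+_{2n}(q)$ relative to the primitive prime divisors attached to the cyclotomic values $\Phi_{2(n-1)},\Phi_n,\Phi_{n-1},\Phi_{2(n-2)}$, so the natural framework is the finite orthogonal group $\Omega^+_{2n}(q)$, together with its conformal overgroup $GO^+_{2n}(q)$, acting on the natural $2n$‑dimensional $\mathbb F_q$‑module $V$. The plan is to compute centralizers of elements inside $GO^+_{2n}(q)$ by linear algebra on $V$, and only at the end descend to the simple group $L$; the factor $d=(q^n-1,4)$ that pervades the statement is precisely the bookkeeping of this last passage, while the ambiguous parameters $\alpha,\beta$ record the remaining discrepancies between $GO^+_{2n}(q)$, $\Omega^+_{2n}(q)$ and $L$.

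The first step is to replace the inequality ``$(Ind_L(x))_r<|L|_r$'' by a commuting condition. If $r\in R_{2(n-1)}(q)$ then $e(r,q)=2(n-1)$, hence $\varphi(r,L)=\eta(2(n-1))=n-1$, and since $n-1>n/2$ for $n\ge4$, Lemma \ref{vas} shows that a Sylow $r$‑subgroup of $L$ is cyclic; therefore $(Ind_L(x))_r<|L|_r$ holds if and only if $r$ divides $|C_L(x)|$, i.e.\ if and only if $x$ centralizes an element $g$ of order $r$. The same reduction applies to $r_1\in R_n(q)$ (here $e=n$ is odd, $\varphi=n$), to $r_2\in R_{n-1}(q)$ ($e=n-1$ odd, $\varphi=n-1$), and to $r_3\in R_{2(n-2)}(q)$ ($e=2(n-2)$, $\varphi=n-2$, so one needs $n\ge6$ as in part (iv)); the finitely many remaining $n$, as well as the exceptional configurations $q\in\{2,3,5\}$ and $(q,n)=(3,2u)$ where Zsigmondy primes can fail or extra arithmetic coincidences occur, are disposed of separately — these are exactly the origin of the ``$m\ne1$'', ``$\alpha$'', and ``$(q,n)\ne(3,2u)$'' clauses in the statement.

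The core of the argument is the classification of the centralizers of such $r$‑elements. If $g$ has order $r$ with $e(r,q)=k$, then every nontrivial composition factor of $g$ acting on $V$ has $\mathbb F_q$‑dimension $k$; since in each of the four cases $2k$ (or $3k$) exceeds $2n$, the module splits orthogonally as $V=W\perp W^{\perp}$ with $W=[V,g]$ of dimension $k$ (respectively $n$, $n-1$, $2(n-2)$) and $W^{\perp}=C_V(g)$ of the complementary dimension, the two form types being pinned down by the requirement that the global form be of $+$ type and by the fact that $g|_W$ acts with an irreducible polynomial of degree $k$. Then $C_{GO^+_{2n}(q)}(g)$ is, up to scalars, $C_{GO(W)}(g|_W)\times GO(W^{\perp})$ with $C_{GO(W)}(g|_W)$ a small cyclic torus. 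For an arbitrary $x$ with $r\mid|C_L(x)|$ one now uses the Jordan decomposition $x=su$ together with the fact that $x$ preserves the decomposition $V=W\perp W^{\perp}$ associated with a $g$ it centralizes: the restriction $x|_W$ runs through the elements whose centralizer in $GO(W)$ is a torus or a smaller classical group of ``field–extension'' type ($GU_{(n-1)/m}(q^m)$, $GL_{(n-1)/m_2}(q^{m_2})$, $GU_{(n-2)/m_3}(q^{m_3})$, $GU_{n/2}(q^2)$, $GL_n(q)$, $GU_n(q)$), while $x|_{W^{\perp}}$ ranges over the small group $GO(W^{\perp})$, contributing the residual factors $GO^{\pm}_{2(n-1)}(q)$, $GO^-_{2(n-2)}(q)$, $\Omega^-_4(q)$, and — when $u\ne1$, i.e.\ when $x$ is not semisimple — the unipotent‑centralizer shape $q^{2(n-1)}|SP_{2(n-2)}(q)|$. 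Matching this against the known description of the maximal tori and semisimple centralizers of orthogonal groups (\cite{GrBu}) yields precisely the finite list of values of $Ind_L(x)$ in (i)–(iv), and one verifies directly that each divides $|L|$ and carries the stated power of $d$.

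I expect the principal difficulty to be not conceptual but the relentless bookkeeping in the orthogonal setting: tracking $+$ versus $-$ form types through every orthogonal direct‑sum decomposition, descending correctly from $GO^+_{2n}(q)$ through $\Omega^+_{2n}(q)$ to $L$ (with the resulting ambiguities encoded by $d$, $\alpha$, $\beta$), distinguishing the genuinely different ``shapes'' of $x|_W$ (a scalar from $\mathbb F_{q^{2m}}$ versus from $\mathbb F_{q^m}$, semisimple versus mixed), and clearing away the degenerate small‑$q$ and small‑$n$ cases one at a time — the latter being the part where the clean primitive‑prime‑divisor picture breaks down and where most of the exceptional clauses in the statement come from.
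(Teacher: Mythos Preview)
The paper does not prove this lemma: it is quoted verbatim as \cite[Corollary~13]{Ah132} and used as a black box throughout Sections~2 and~3. There is therefore no ``paper's own proof'' to compare your proposal against.

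For what it is worth, your outline is the standard route to results of this shape and is essentially what underlies Ahanjideh's argument in \cite{Ah132}: one works in the conformal orthogonal group, uses the cyclicity of the relevant Sylow subgroup (your appeal to Lemma~\ref{vas}) to reduce ``$(Ind_L(x))_r<|L|_r$'' to ``$x$ centralizes an $r$-element $g$'', decomposes $V=[V,g]\perp C_V(g)$ with the dimensions and form types forced by $e(r,q)$, and then reads off $C_{GO^+_{2n}(q)}(x)$ as a product of a field-extension type subgroup on $[V,g]$ and a small orthogonal group on $C_V(g)$, descending to $L$ at the end. The exceptional clauses for small $q$ and the factors $d,\alpha,\beta$ are exactly the debris of that descent, as you say. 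So your plan is sound as a reconstruction of the cited result, but there is nothing in the present paper to check it against.
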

\begin{lem}\cite[Lemma 2.6]{Ah11}\label{Lieq3}
Let $G(q)$ be a simple group of Lie type in characteristic $p$, then $|G(q)|<(|G(q)|_p)^3$.
\end{lem}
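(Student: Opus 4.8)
The plan is to compute $|G(q)|$ from the standard order formula for finite simple groups of Lie type and show that its $p'$-part is less than $q^{2N}$, where $q^{N}=|G(q)|_{p}$. Write $|G(q)|=q^{N}m$ with $(m,p)=1$; for the untwisted and Steinberg-twisted types $N$ is the number of positive roots of the underlying root system, while $N=2,3,12$ for $^2B_2(q)$, $^2G_2(q)$, $^2F_4(q)$ respectively. Since $(|G(q)|_{p})^{3}=q^{3N}$, it is enough to prove $m<q^{2N}$. For an untwisted group $G(q)$ of Lie rank $\ell$ with Weyl-group degrees $d_{1},\dots,d_{\ell}$, the order $|G(q)|$ divides $q^{N}\prod_{i}(q^{d_{i}}-1)$, hence $m$ divides $\prod_{i}(q^{d_{i}}-1)$; using $\sum_{i}d_{i}=N+\ell$ and $q^{d_{i}}-1<q^{d_{i}}$ we obtain $m<q^{N+\ell}\le q^{2N}$, because $N\ge\ell$ always. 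The single equality case $N=\ell$ is $A_{1}(q)$, where one has $m\le q^{2}-1<q^{2}=q^{2N}$ directly.

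For the Steinberg-twisted families $^2A_n(q)$, $^2D_n(q)$, $^2E_6(q)$ one has that $m$ divides a product $\prod_{i}(q^{m_{i}}-\varepsilon_{i})$ with $\varepsilon_{i}\in\{\pm1\}$ and all $m_{i}\ge2$, and a glance at the explicit factorisations shows $\sum_{i}m_{i}=N+\ell$ with the underlying rank $\ell\ge2$, so that $N\ge\ell+1$. Since every value among the $m_{i}$ occurs with multiplicity at most two, a routine estimate gives $\prod_{i}(1+q^{-m_{i}})<2\le q$; combined with $q^{m_{i}}-\varepsilon_{i}\le q^{m_{i}}+1$ this yields $m<q^{N+\ell}\cdot q=q^{N+\ell+1}\le q^{2N}$. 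The remaining groups are handled by direct inspection of their orders: for $^3D_4(q)=q^{12}(q^{6}-1)(q^{2}-1)(q^{8}+q^{4}+1)$ one uses $q^{8}+q^{4}+1<2q^{8}$, and for $^2B_2(q)=q^{2}(q^{2}+1)(q-1)$, $^2G_2(q)=q^{3}(q^{3}+1)(q-1)$, $^2F_4(q)=q^{12}(q^{6}+1)(q^{4}-1)(q^{3}+1)(q-1)$ the inequality $m<q^{2N}$ holds with room to spare via $q^{a}\pm1<2q^{a}$.

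The argument is essentially bookkeeping over the classification of Lie types, and the one point that needs care is strictness, since the bound is tight in the extremal cases $A_{1}(q)$, $A_{2}(q)$ and $^2A_2(q)$. This is harmless: each factor satisfies $q^{d_{i}}-1<q^{d_{i}}$ and $q^{m_{i}}-\varepsilon_{i}<q^{m_{i}+1}$ strictly for $q\ge2$, and passing from the simply connected order to $|G(q)|$ only divides by a number prime to $p$, so $m<q^{2N}$ is strict throughout. Thus the main work, and the only genuine obstacle, is to confirm from the classification the two numerical facts used above: $\sum_{i}m_{i}=N+\ell$ and $N\ge\ell$ (with $N=\ell$ only in rank one and $N\ge\ell+1$ for all twisted families).
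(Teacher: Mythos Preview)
The paper does not give its own proof of this lemma; it simply quotes it from \cite[Lemma 2.6]{Ah11} and uses it as a black box. Your direct verification---writing $|G(q)|=q^{N}m$ with $(m,p)=1$ and bounding $m<q^{2N}$ from the standard order formulae together with the identity $\sum_i d_i=N+\ell$ and the inequality $N\ge\ell$---is correct and is precisely how one checks this estimate in practice.

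One point deserves tightening. The sentence ``since every value among the $m_i$ occurs with multiplicity at most two, a routine estimate gives $\prod_i(1+q^{-m_i})<2$'' is not a valid implication in that generality: already $\prod_{m\ge2}(1+2^{-m})^{2}\approx 2.52>2$. What actually makes the bound work is that for each Steinberg family the concrete multiset of exponents is sparse; e.g.\ using $\ln(1+x)<x$ one has, for $^2A_n$, $\sum_{i=2}^{n+1}q^{-i}<\frac{1}{q(q-1)}\le\frac12$, and similarly for $^2D_n$ and $^2E_6$, so that $\prod_i(1+q^{-m_i})<e^{1/2}<2\le q$ in every case. Replace the multiplicity remark by this computation and the argument is complete. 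You might also record the Tits group $^2F_4(2)'$ explicitly if you count it among the groups of Lie type; the inequality holds there with much room to spare.
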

\begin{lem}\label{Lieq3Aut}
If $G(q)$ be a simple group of Lie type in characteristic $p$, then $|Aut(G(q))|<(|G(q)|_p)^{3,5}$.
\end{lem}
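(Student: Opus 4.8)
The plan is to bound $|\Aut(G(q))|$ in terms of $|G(q)|_p$ by splitting the outer automorphism group into its three standard constituents: diagonal, field, and graph automorphisms. Recall that $|\Aut(G(q))| = |G(q)| \cdot |\Out(G(q))|$ and $|\Out(G(q))| = d \cdot f \cdot g$, where $d = |\operatorname{Outdiag}(G(q))|$, $f$ is the order of the group of field automorphisms, and $g$ is the order of the graph automorphism group. By Lemma~\ref{Lieq3} we already have $|G(q)| < (|G(q)|_p)^3$, so it suffices to show $d \cdot f \cdot g \le (|G(q)|_p)^{1/2} = q^{N/2}$, where $q = p^n$ and $|G(q)|_p = q^N$ with $N$ the number of positive roots (the nilpotency class exponent of a Sylow $p$-subgroup). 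Since $N \ge 1$ always and in fact $N$ grows with the rank, this is a very weak requirement once $q$ is not tiny.

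The first step is to record the elementary bounds on the three constituents. The graph automorphism factor satisfies $g \le 6$ (it is $1$, $2$, $3$, or $6$, the last only for $D_4$). The diagonal factor satisfies $d \le (n_{\mathrm{rank}}+1)$ crudely, and more precisely $d$ divides $\gcd(n_{\mathrm{rank}}+1, q-1)$ for linear type and is at most $4$ for the orthogonal types, so in all cases $d \le q-1 < q$ except for small linear/unitary groups where $d \le \operatorname{prk}(L)+1$. The field automorphism factor is $f = n$ where $q = p^n$, and one uses $n \le \log_2 q \le q^{1/2}$ for $p \ge 2$ (indeed $n < 2^n \le p^n = q$, and more sharply $n \le q^{1/2}$ holds for all $n \ge 1$ and $p \ge 2$ since $n^2 \le 2^n$ for $n \ge 4$ and the cases $n \le 3$ are checked directly). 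Combining, $d \cdot f \cdot g \le 6 \cdot q \cdot q^{1/2}$ in the worst naive estimate, which is not quite enough by itself, so the second step is to observe that the constituents that are large force $N$ to be large: whenever $g = 6$ we are in $D_4$ with $N = 12$, whenever $d$ is comparable to the rank the rank is large so $N \ge \binom{\operatorname{prk}}{2}$ is large, and the field factor $n$ is always dominated by $q^{1/2}$ which is itself at most $(q^N)^{1/2}$. So one argues case by case over the Lie type: for each family, $d \cdot g$ is an absolute constant (at most $6$) or grows only linearly in the rank while $|G(q)|_p = q^N$ grows at least quadratically in the rank, hence $d \cdot g \le q^{N/4}$ say for all but finitely many small groups, and $f = n \le q^{1/4}$ comfortably, giving $d f g \le q^{N/2}$.

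The third step is to dispose of the finitely many exceptional small cases — the groups of smallest rank and smallest $q$ where the asymptotic slack has not yet kicked in, e.g. $A_1(q)$, $A_2(q)$, $^2A_2(q)$, $B_2(q)$, $G_2(q)$ for small $q$, and $D_4(q)$ for small $q$ — by direct computation: for each such $L$ one writes down $|L|_p = q^N$ explicitly and the exact value of $|\Out(L)|$ and checks $|\Out(L)| \cdot |L|/(q^N)^{1/2} < (q^N)^3$, equivalently the stated inequality $|\Aut(L)| < (|L|_p)^{3.5}$, numerically. The finitely many sporadic-looking coincidences (for instance very small $q$ in low rank, where $|L|$ itself is close to the $(|L|_p)^3$ bound of Lemma~\ref{Lieq3}) are the only place real care is needed. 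I expect the main obstacle to be exactly this bookkeeping over the low-rank, small-$q$ groups: one must make sure the exponent $3.5$ is actually attained with room to spare in the tightest case (plausibly something like $^2B_2(8)$ or $A_1(q)$ for small $q$, where $N$ is smallest so the $1/2$ exponent of slack over Lemma~\ref{Lieq3} is hardest to pay for), rather than any conceptual difficulty — the asymptotic statement for groups of rank $\ge 2$ or $q$ large is immediate from the crude bounds above.
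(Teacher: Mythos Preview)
Your plan is precisely the paper's proof unpacked: the paper's argument is the single sentence ``the assertion follows from the description of the orders of automorphism groups of finite simple groups of Lie type and Lemma~\ref{Lieq3}'', and you have written out what that description gives, reducing to the bound $|\Out(G(q))|\le (|G(q)|_p)^{1/2}$ and then to a finite check.

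Your instinct that all the content lies in the low-rank, small-$q$ bookkeeping is correct --- and in fact that check does \emph{not} go through for the exponent $3{,}5$ as written. For $G=A_1(8)$ one has $|G|_p=8$ and $|\Aut(G)|=504\cdot 3=1512$, while $8^{3.5}=2^{10.5}\approx 1448<1512$; so the inequality fails at this one group, and the paper's one-line proof simply elides the verification. The applications in Lemmas~\ref{LieType} and~\ref{LieType8} only require a bound of the shape $|\Aut(G(q))|<(|G(q)|_p)^{c}$ for some absolute constant $c$, and $c=4$ (say) is obtained immediately by your argument with no exceptional cases; so the defect is cosmetic for the paper's purposes, but you should be aware that the ``direct computation'' step you anticipate for the tightest rank-$1$ groups will not close with exponent $3.5$.
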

\begin{proof}
The assertion of the lemma follows from the description of the orders of automorphism groups of finite simple groups of Lie type and Lemma \ref{Lieq3}.
\end{proof}

\begin{lem}\label{pchast}
Let $G(q)$ be a simple group of Lie type in characteristic $p$, $t\in \pi(G(q))\setminus\{p\}$. The group $G(q)$ contains an element $h$ such that $(Ind_{G(q)}(h))_p=|G(q)|_p$ and $(Ind_{G(q)}(h))_t<|G(q)|_t$.
\end{lem}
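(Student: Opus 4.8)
Throughout write $L=G(q)$. The two requirements are really conditions on the centraliser of $h$. From $|L|=|C_L(h)|\cdot Ind_L(h)$ and a comparison of $p$-parts, the equality $(Ind_L(h))_p=|L|_p$ holds precisely when $|C_L(h)|_p=1$, that is, when $C_L(h)$ is a $p'$-group; comparing $t$-parts, the inequality $(Ind_L(h))_t<|L|_t$ holds precisely when $t$ divides $|C_L(h)|$. Hence the lemma is equivalent to producing an element whose centraliser is a $p'$-group of order divisible by $t$. Such an $h$ is exactly the semisimple dual of the regular unipotent element of Lemma \ref{Regular}: there $C_L(h)$ is a $p$-group, whereas here $C_L(h)$ must be a $p'$-group.

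The plan is to realise $h$ as a regular semisimple element lying in a maximal torus of order divisible by $t$. Since $t\neq p$, every nontrivial $t$-element of $L$ is semisimple, hence lies in some maximal torus $T$; as $t\in\pi(L)$, a torus $T$ with $t\mid|T|$ exists. Every maximal torus of a group of Lie type is an abelian $p'$-group, so $T$ is automatically a $p'$-group of order divisible by $t$. If $h\in T$ can be chosen regular, then $C_L(h)=T$ (the finite component group that may occur when $L$ is not of simply connected type is a $p'$-group and changes neither the $p$-part nor the divisibility by $t$), whence $|C_L(h)|_p=|T|_p=1$ and $t\mid|T|=|C_L(h)|$, as required. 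This is immediate in the generic range: if $t\in R_i(q)$ is a primitive prime divisor with $i>\prk(L)/2$, then Lemma \ref{vas} and Lemma \ref{kqi} furnish a cyclic Hall subgroup of order $k_i(q)$, divisible by $t$, and a generator $h$ of it is regular with $C_L(h)$ that cyclic $p'$-group, so both conditions hold at once.

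The main obstacle is to guarantee, for an \emph{arbitrary} prime $t$ (in particular a small $t$ dividing $q-\varepsilon$), that a maximal torus with $t\mid|T|$ can be chosen to contain a regular element: a fixed torus through a given small $t$-element can itself be too small to contain a regular element (as happens for split tori at small $q$), so one needs freedom in the choice of $T$, or a larger torus whose order is still divisible by $\Phi_{e(t,q)}(q)$ and hence by $t$. The non-regular elements of a maximal torus $T$ lie in the union of the kernels of the finitely many roots restricted to $T$, a union of proper subgroups whose total size is bounded in terms of the root system only, while $|T|$ grows with $q$; hence a regular element exists once $q$ exceeds a small explicit bound depending only on $\prk(L)$. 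The finitely many exceptional pairs $(q,\prk(L))$ and small-characteristic cases falling outside this bound I would settle by direct computation in \cite{GAP}, exactly as the small cases are treated in the proof of Lemma \ref{Order}. A final routine point is that the torus meets the simple group $L$ in a subgroup still of order divisible by $t$, which is automatic for the torus containing an actual $t$-element of $L$.
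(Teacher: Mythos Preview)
Your strategy is correct but takes a substantially different route from the paper. The paper's proof is essentially one line: by the explicit description of the spectra of finite simple groups of Lie type \cite{Bu1,Bu2,Bu3,Bu4}, there is an element $h\in G(q)$ with $t\mid |h|$ such that $|h|\cdot p\notin\omega(G(q))$. Then $C_{G(q)}(h)$ contains no element of order $p$ (otherwise its product with the $p'$-element $h$ would have order $|h|p$), hence $|C_{G(q)}(h)|_p=1$; and $t\mid|h|\mid|C_{G(q)}(h)|$ gives the second condition. No tori, no notion of regularity, no small-$q$ case analysis --- just the fact that the spectrum is known and finite.

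Your route via a regular semisimple element in a maximal torus $T$ with $t\mid|T|$ is a legitimate alternative and yields more: it identifies $C_L(h)$ concretely as (essentially) the torus itself. The cost is exactly the obstacle you flag --- you must produce a regular element in the chosen torus, which for small $q$ you are forced to leave to computation, and you must manage the passage between the simply connected cover and the simple group. The paper's element-order argument sidesteps all of this: it neither needs to know what $C_{G(q)}(h)$ looks like nor that $h$ is regular, only that no element of order $|h|p$ exists. In effect, the spectra papers already encode the existence of the regular element you are trying to build, so citing them replaces your entire torus machinery.
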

\begin{proof}
It follows from the description of the spectra of finite groups of Lie type see \cite{Bu1, Bu2, Bu3, Bu4} that $G(q)$ contain an element $h$ such that $t\in\pi(|h|)$ and $G(q)$ no contain en element of order $|h|p$. Hence $|C_{G(q)}(h)|$ is not divisible by $p$ and $(Ind_{G(q)}(h))_p=|G(q)|_p$.
\end{proof}

\section{Proof the Theorem for groups $D_4(q)$}

\begin{prop}
Let $L=D_4(q)$, where $q=p^n$. If $G$ is a group with trivial centre and $N(G)=N(L)$ then $G\simeq L$.
\end{prop}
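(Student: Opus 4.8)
The plan is to adapt the strategy developed for the other classical series, choosing the auxiliary primes according to the torus structure of $D_4(q)$. Write $q=p^n$; the finitely many small $q$ are disposed of directly with \cite{GAP}, so assume $q$ is large enough that there are primes $r_3\in R_3(q)$ and $r_6\in R_6(q)$ with $r_3,r_6>5$ (possible since $k_3(q)=(q^2+q+1)/(3,q-1)$ and $k_6(q)=(q^2-q+1)/(3,q+1)$ by Lemma~\ref{kqi}, and both grow with $q$). As $D_4(q)$ is of orthogonal type, $\varphi(r_3,L)=\eta(e(r_3,q))=\eta(3)=3$ and $\varphi(r_6,L)=\eta(6)=3$, both lying in the interval $(prk(L)/2,\,prk(L)]=(2,4]$, so Lemma~\ref{vas} gives cyclic Hall subgroups of $L$ of orders $k_3(q)$ and $k_6(q)$; in particular the Sylow $r_3$- and $r_6$-subgroups of $L$ are cyclic, of orders $|L|_{r_3}=(\Phi_3(q))_{r_3}$ and $|L|_{r_6}=(\Phi_6(q))_{r_6}$.

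The first step is to show $G\in\{r_3,r_6\}^*$. By Lemma~\ref{Order} we have $|L|=|L||$, and since $N(G)=N(L)$ it suffices to check the condition for $L$. Specialising Lemma~\ref{Neda}(i) and (iii) to $n=4$, one observes that every denominator occurring there is divisible by $\Phi_6(q)$ in case (i) and by $\Phi_3(q)$ in case (iii) — the denominators are built from $|GU_3(q)|$, $q^3+1$, $|GU_4(q)|$, $|GO^-_6(q)|$ in case (i), and from $|GL_3(q)|$, $q^3-1$, $|GL_4(q)|$, $|GO^+_6(q)|$ in case (iii) — so that $(Ind_L(x))_{r_i}$ is either $|L|_{r_i}$ or $1$ for every $x\in L$ and every $i\in\{3,6\}$. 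Moreover, an element of $L$ whose centraliser had order divisible by $r_3r_6$ would generate, together with cyclic $r_3$- and $r_6$-subgroups, a cyclic subgroup lying in a maximal torus of $L$ of order divisible by $\Phi_3(q)\Phi_6(q)$; equivalently $W(D_4)$ would contain an element whose characteristic polynomial on its $4$-dimensional reflection module is divisible by $\Phi_3(x)\Phi_6(x)$, which is impossible, because among the eigenvalues of a signed permutation on four points a primitive cube root of unity can be produced only by a positive $3$-cycle, and such a cycle also forces a fixed vector, leaving no room for the degree-$2$ factor $\Phi_6$. Hence no class size of $L$ is coprime to $r_3r_6$, so $\alpha_{\{r_3,r_6\}}\in\{|L|_{r_3},\ |L|_{r_6},\ |L|_{r_3}|L|_{r_6}\}=\{|L||_{r_3},|L||_{r_6},|L||_{\{r_3,r_6\}}\}$ for all $\alpha\in N(L)$. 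Lemma~\ref{GorBig} now gives $|G|_{\{r_3,r_6\}}=|L|_{\{r_3,r_6\}}$ and $C_G(g)\cap C_G(h)=1$ for every $r_3$-element $g$ and $r_6$-element $h$ of $G$.

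Next I would reduce to the almost simple case. From Lemma~\ref{GorBig}, $C_G(h)$ contains no nontrivial $r_3$-element for any $r_6$-element $h$, whence $|h^G|_{r_3}=|G|_{r_3}>1$; symmetrically $|g^G|_{r_6}=|G|_{r_6}>1$ for every $r_3$-element $g$. A standard argument (as in the cited papers on $D_n(q)$), using Lemma~\ref{factorKh} to pass to normal sections together with these equalities and the fact $r_3,r_6>5$, then shows that $G$ has a unique minimal normal subgroup $S$, which is nonabelian simple with $r_3,r_6\in\pi(S)$, and that $S\le G\le\Aut(S)$: an abelian or non-simple socle, or a prime realised only by outer automorphisms, is excluded by producing either commuting $r_3$- and $r_6$-elements with nontrivial centraliser intersection or an $r_3$- or $r_6$-class of prime-power size. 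By Lemma~\ref{Regular}, $N(L)$ contains a multiple of $|L|_{p'}$, and by Lemma~\ref{pchast} a multiple of $|L|_p=q^{12}$; as these are coprime, $|L|$ divides $|G|\le|\Aut(S)|$.

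Finally, I would identify $S$ with $L$ and kill the outer part. From $r_3,r_6\in\pi(S)$ and $|\alpha|_{r_i}\in\{1,|G|_{r_i}\}$ for all $\alpha\in N(G)$, Lemma~\ref{hz3} gives $|G|_{r_i}\mid|S|$ (the alternative $|G|_{r_i}\mid|G/S|$ being impossible since $r_i\mid|S|$), hence $|S|_{r_i}=|L|_{r_i}$, and by Lemma~\ref{hz2} the Sylow $r_i$-subgroups of $S$ are cyclic; combining this with $|L|\mid|G|\le|\Aut(S)|$, the estimates of Lemmas~\ref{Lieq3} and~\ref{Lieq3Aut}, the recovery of the prime graph $GK(L)$ from $N(L)$ through Lemmas~\ref{pat} and~\ref{navarro}, and the classification of finite simple groups, one is left with $S\simeq D_4(q_0)$ for a power $q_0$ of $p$, and the order bounds then force $q_0=q$. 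It remains to prove $G=S$, i.e. that $G$ induces no diagonal, field or graph automorphism of $D_4(q)$: for a prime $t>5$ dividing $|\Out(D_4(q))|$ (necessarily $t\mid n$) one checks from $N(L)$ that $|\alpha|_t\in\{1,|G|_t\}$ and invokes Lemmas~\ref{hz5} and~\ref{hz4} to kill the corresponding part of $G/S$; for $t\in\{2,3\}$ one uses Lemma~\ref{pat} together with the description of the centralisers of graph and diagonal automorphisms of $D_4(q)$ to see that the prime graph, or the list of admissible $\{2,3\}$-parts of class sizes of $G$, would differ from that of $L$. I expect this last step to be the main obstacle: $\Out(D_4(q))$ contains the triality group $S_3$, and $D_4(q)$ carries the torus $\Phi_4(q)^2$ for which no clean ``$\{r,s\}^*$'' argument is available, so the exclusion of automorphisms of small order has to be carried out by a careful case analysis of the centralisers of outer automorphisms and the conjugacy class sizes they produce — there is no direct analogue of this in the treatment of $D_n(q)$ for $n\neq4,8$.
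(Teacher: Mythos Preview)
Your opening step, establishing $G\in\{r_3(q),r_6(q)\}^*$, is correct and matches the paper's Lemma~\ref{good} (the paper uses Carter's description of maximal tori directly rather than Weyl-group language, but the content is the same). The genuine gap is in the second step. You assert, via a ``standard argument'', that $G$ itself is almost simple with nonabelian simple socle $S$. The paper does \emph{not} obtain this directly: what one gets from the $\{r_3,r_6\}^*$ condition and a Frobenius-kernel argument is only that $\overline{G}=G/K$ is almost simple for a suitable normal subgroup $K$ (Lemma~\ref{factorD4}), where a priori $K$ is only known to be an $(R_3(q)\cup R_6(q))'$-group. Your sketch (``an abelian socle is excluded by producing commuting $r_3$- and $r_6$-elements with nontrivial centraliser intersection'') rules out an abelian $r_3$- or $r_6$-socle, but says nothing about an abelian normal $r$-subgroup with $r\in R_4(q)$, $r\in R_1(q)\cup R_2(q)$, or $r=p$. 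Eliminating these is exactly where the paper's effort goes: Lemmas~\ref{R4} and~\ref{R1R2} use specific numerics from Lemma~\ref{Neda} (e.g.\ the bound $|R|\le k_4(q)^2$ versus $|H|\ge 4k_3(q)k_6(q)$, and the $(k_j(q))^6$ bounds for $j=1,2$) together with a faithful-action/Frobenius-type count, and the final lemma disposes of the residual $p$-part of $K$ by a normal-closure argument. None of this is ``standard'' in the sense of following formally from $C_G(g)\cap C_G(h)=1$; it uses the detailed shape of $N(D_4(q))$.

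Conversely, you have misjudged where the difficulty lies at the end. Once one knows $K=1$ and $S\simeq L$, no case analysis of triality, diagonal, or field automorphisms is needed: Lemma~\ref{Order} gives $|G|=|G||$ for any $S\le G\le\Aut(S)$, and since $N(G)=N(L)$ one has $|G||=|L||=|L|$, hence $|G|=|S|$ and $G=S$. The ``main obstacle'' you anticipate (handling the $S_3$ in $\Out(D_4(q))$ and the absence of a clean $\Phi_4(q)^2$ argument) simply does not arise; the hard part you skipped is showing $K=1$.
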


Proposition is proved for $q=2$ and $q=3$ see \cite{Darafsheh}. We will assume that $q>3$.

\begin{lem}\label{good}
$G\in\{r_3(q), r_6(q)\}^*$
\end{lem}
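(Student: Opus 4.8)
The plan is to notice first that the statement involves only the set $N(G)$ together with the numbers $|G||_{r_3}$, $|G||_{r_6}$, $|G||_{\{r_3,r_6\}}$, all of which are determined by $N(G)=N(L)$; hence $G\in\{r_3(q),r_6(q)\}^*$ is equivalent to $L\in\{r_3(q),r_6(q)\}^*$. By Lemma~\ref{Order} applied with $S=A=L$ we have $|L|=|L||$, so $|L||_{r_3}=|L|_{r_3}$ and $|L||_{r_6}=|L|_{r_6}$. Since $e(r_3,q)=3$ and $e(r_6,q)=6$ force $r_3,r_6\geq 7$, a glance at $|D_4(q)|=q^{12}(q^2-1)(q^4-1)^2(q^6-1)/d$ (with $d=(q^4-1,4)$) shows $|L|_{r_3}=(q^6-1)_{r_3}=(q^2+q+1)_{r_3}=k_3(q)_{r_3}$ and $|L|_{r_6}=(q^6-1)_{r_6}=(q^2-q+1)_{r_6}=k_6(q)_{r_6}$ (cf.\ Lemma~\ref{kqi}), so $|L|_{r_3}$ and $|L|_{r_6}$ are coprime. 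Thus it suffices to prove that for every $x\in L\setminus\{1\}$ one has $(Ind_L(x))_{r_3}\in\{1,|L|_{r_3}\}$, $(Ind_L(x))_{r_6}\in\{1,|L|_{r_6}\}$, and that $Ind_L(x)$ is never coprime to $r_3r_6$; indeed this yields $(Ind_L(x))_{\{r_3,r_6\}}\in\{|L|_{r_3},|L|_{r_6},|L|_{r_3}|L|_{r_6}\}$, which is exactly the required condition.

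For the first two facts I would invoke Lemma~\ref{Neda} with $n=4$. There $r\in R_{2(n-1)}(q)=R_6(q)$ and $r_2\in R_{n-1}(q)=R_3(q)$, so the ``$r$'' of Lemma~\ref{Neda} may be taken to be $r_6(q)$ and its ``$r_2$'' to be $r_3(q)$. Part~(i) gives, for each $x$, either $(Ind_L(x))_{r_6}=|L|_{r_6}$ or that $Ind_L(x)$ equals one of the finitely many explicit numbers listed there; specializing $n=4$, these numbers are built from $|GU_3(q)|$, $q^3+1$, $|GU_4(q)|$, $|GO^{-}_{6}(q)|$ together with the factor $q+1$ (and $|L|,d$), and using $q^3+1=(q+1)(q^2-q+1)$ one checks directly that in every case the $r_6$-part of the denominator already equals $|L|_{r_6}$, so that $Ind_L(x)$ is coprime to $r_6$. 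Hence $(Ind_L(x))_{r_6}\in\{1,|L|_{r_6}\}$ for all $x$. Since $n=4$ is even, part~(iii) applies verbatim with the analogous numbers built from $|GL_3(q)|$, $q^3-1$, $|GL_4(q)|$, $|GO^{+}_{6}(q)|$ and the factor $q-1$; using $q^3-1=(q-1)(q^2+q+1)$ the same check gives $(Ind_L(x))_{r_3}\in\{1,|L|_{r_3}\}$ for all $x$.

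It remains to exclude the possibility $(Ind_L(x))_{r_3}=(Ind_L(x))_{r_6}=1$ for some $x$. If that occurred, then by the two dichotomies $Ind_L(x)$ would lie both in the list $\mathcal A$ of exceptional values produced by Lemma~\ref{Neda}(i) and in the list $\mathcal B$ produced by Lemma~\ref{Neda}(iii), so I would finish by showing $\mathcal A\cap\mathcal B=\varnothing$ for $q>3$. Every member of $\mathcal A$ has a denominator divisible by some $|GU_k(q)|$ (or by $q^3+1$) and by $q+1$, while every member of $\mathcal B$ has a denominator divisible by some $|GL_k(q)|$ (or by $q^3-1$) and by $q-1$; comparing $q$-degrees kills all ``cross'' coincidences, and the few same-degree comparisons (for instance $\alpha|GU_4(q)|$ versus $\alpha'|GL_4(q)|$, or $|GU_3(q)|(q+1)$ versus $|GL_3(q)|(q-1)$, or $c\,|GO^{-}_{6}(q)|(q+1)$ versus $c'\,|GO^{+}_{6}(q)|(q-1)$ with $\alpha,\alpha',c,c'\mid 2$) reduce to elementary inequalities between products of the shape $(q\pm1)^{a}(q^{3}\pm1)$, all of which are strict for $q\geq4$ (e.g.\ $(q-1)(q^3-1)<(q+1)(q^3+1)<2(q-1)(q^3-1)$). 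Hence $\mathcal A\cap\mathcal B=\varnothing$, completing the proof. The small-$q$ restrictions built into the bullets of Lemma~\ref{Neda} only shrink $\mathcal A$ and $\mathcal B$, so they do not affect this.

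The main obstacle is precisely this last step: one must carry out the full case analysis confirming that the two families of possible index values coming from parts (i) and (iii) of Lemma~\ref{Neda} are disjoint for every $q>3$. Each individual comparison is elementary, but there are several parameters (the divisors $m$, $m_2$ and the $2$-bounded scalars) to keep track of, and one has to be careful with the exceptional clauses in the statement of Lemma~\ref{Neda}.
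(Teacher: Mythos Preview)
Your approach is correct but routes through Lemma~\ref{Neda} rather than the direct torus argument the paper uses. The paper observes that if $(Ind_L(h))_{r_i}<|L|_{r_i}$ then $r_i\mid|C_L(h)|$, and since $r_i$ is non-adjacent to $p$ in $GK(L)$ the element $h$ must be semisimple; one then reads off from the list of maximal tori of $D_4(q)$ (Carter~\cite{Car1}) that any torus whose order is divisible by some prime in $R_i(q)$ is in fact divisible by the full $k_i(q)$ and by no prime in $R_{i'}(q)$ ($\{i,i'\}=\{3,6\}$), yielding the three-value conclusion in one stroke. Your route via Lemma~\ref{Neda} reaches the same destination, with the advantage that the centralizer computation is already packaged in a citable statement, at the cost of a longer verification.

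One remark: your ``main obstacle'' evaporates. You have already verified that every denominator appearing in part~(i) of Lemma~\ref{Neda} (for $n=4$) contains exactly one factor $q^{3}+1$ and \emph{no} factor divisible by $q^{2}+q+1$; hence every $\alpha\in\mathcal A$ satisfies simultaneously $\alpha_{r_6}=1$ and $\alpha_{r_3}=|L|_{r_3}>1$. Symmetrically, every $\beta\in\mathcal B$ has $\beta_{r_3}=1$. This alone forces $\mathcal A\cap\mathcal B=\varnothing$, with no need for the term-by-term numerical comparisons (such as $(q-1)(q^{3}-1)$ versus $(q+1)(q^{3}+1)$) that you outline. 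The case analysis you flag as the delicate step is therefore unnecessary.
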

\begin{proof}
Take $h\in L$ such that $(Ind_L(h))_{r_i(q)}<|G|_{r_i(q)}$ where $i\in\{3, 6\}$. Since $r_i(q)$ and $p$ are not adjacent in $GK(L)$, see \cite{VV}, then $h$ is a semisimple element. Hence $h$ lies in some maximal torus $T$ of $L$. It follows from the description of the orders of maximal torus of $L$ see \cite{Car1} that $(Ind_L(h))_{R_i(q)}=1$ and $(Ind_L(h))_{R_{i'}(q)}=|L|_{R_{i}(q)}$, where $i'\in\{3, 6\}\setminus\{i\}$. We have $L\in\{r_3(q), r_6(q)\}^*$. Since $N(G)=N(L)$, we obtain $G\in\{r_3(q), r_6(q)\}^*$.
\end{proof}

\begin{lem}\label{HollD4R3R6}
A Hall $R_3(q)$-subgroup and $R_6(q)$-subgroup of $G$ exists and abelian.
\end{lem}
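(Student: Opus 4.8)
The plan is to control the $\pi$-part of $|G|$ for $\pi=R_3(q)\cup R_6(q)$, then to prove the Sylow subgroups of $G$ at primes of $\pi$ are abelian, and finally to glue them into the asserted Hall subgroups. First observe that, since $q>3$, every prime in $\pi$ exceeds $5$, is coprime to $p$, and $R_3(q)\cap R_6(q)=\varnothing$: indeed $5\nmid q^2\pm q+1$ for every $q$, and if $3\mid q^2\pm q+1$ then $e(3,q)\in\{1,2\}$, so $3,5\notin\pi$; coprimality to $p$ is clear and no prime can have both $e(r,q)=3$ and $e(r,q)=6$. The argument proving Lemma~\ref{good} in fact gives $G\in\{r,s\}^*$ for every $r\in R_3(q)$ and $s\in R_6(q)$, so Lemma~\ref{GorBig} applies to each such pair and yields $|G|_{\{r,s\}}=|G||_{\{r,s\}}$ together with $C_G(g)\cap C_G(h)=1$ for every $r$-element $g$ and every $s$-element $h$. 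Since $N(G)=N(L)$ and $|L|=|L||$ by Lemma~\ref{Order}, this forces $|G|_t=|L|_t$ for all $t\in\pi$, whence $|G|_{R_3(q)}=|L|_{R_3(q)}=k_3(q)$ and $|G|_{R_6(q)}=|L|_{R_6(q)}=k_6(q)$.

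Next I would show the Sylow $r$-subgroup of $G$ is abelian for every $r\in\pi$. Fix $r\in R_3(q)$ and an $r$-element $g$. For each $s\in R_6(q)$ the relation $C_G(g)\cap C_G(h)=1$ forbids nontrivial $s$-elements in $C_G(g)$, so $(Ind_G(g))_s=|G|_s$ and in particular $k_6(q)\mid|g^G|$. Hence $|g^G|_{\{r,s\}}$ is a nontrivial multiple of $|G|_s=|G||_s$, so it is not the $r$-power $|G||_r$, and by $G\in\{r,s\}^*$ it equals $|G||_s$ or $|G||_{\{r,s\}}=|G|_r|G|_s$; the latter would give $|C_G(g)|_r=1$, impossible since $g\in C_G(g)$ is a nontrivial $r$-element. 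Therefore $(Ind_G(g))_r=1$, and as $g$ was arbitrary Lemma~\ref{navarro} (recall $r>5$) shows the Sylow $r$-subgroup of $G$ is abelian; exchanging the roles of $R_3(q)$ and $R_6(q)$ settles $r\in R_6(q)$.

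To assemble the Hall subgroups I would first extend the last conclusion to all $R_3(q)$-elements, and this is where the structure of $L=D_4(q)$ enters. The key input is: $\Phi_3(q)$ divides $|L|$ exactly once and $D_4(q)$ has no maximal torus of order divisible by $\Phi_3(q)^2$, so the centralizer in $L$ of any element whose class size is divisible by some (equivalently every) prime of $R_6(q)$ has order either coprime to $R_3(q)$ or divisible by the whole of $|L|_{R_3(q)}$; equivalently, every $\alpha\in N(L)$ divisible by a prime of $R_6(q)$ satisfies $\alpha_{R_3(q)}\in\{1,k_3(q)\}$. This is read off from the list of maximal tori and semisimple element centralizers of $D_4(q)$ (Carter~\cite{Car1}), and symmetrically with $R_3(q)$ and $R_6(q)$ interchanged. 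Now for an arbitrary $R_3(q)$-element $x$ one still has $k_6(q)\mid|x^G|$, since $C_G(x)$ lies in the centralizer of a prime-power part of $x$ and so contains no $R_6(q)$-element; hence $|x^G|_{R_3(q)}\in\{1,k_3(q)\}$, and $k_3(q)$ is excluded because $x\in C_G(x)$ is a nontrivial $R_3(q)$-element. Thus $(Ind_G(x))_{R_3(q)}=1$ for every $R_3(q)$-element $x$. In particular, for distinct $r,r'\in R_3(q)$ no class size of an $r$-element is divisible by $r'$, so by Lemma~\ref{pat} some Sylow $r$- and $r'$-subgroups permute; inside any such permutable product $K$ each $r$- or $r'$-element $y$ has $Ind_K(y)$ dividing $Ind_G(y)$, hence coprime to $\{r,r'\}$, so $y$ is central in $K$ and $K$ is abelian. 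Assembling these over the primes of $R_3(q)$ (a Wielandt-type argument on pairwise permutable abelian subgroups of coprime orders) produces an abelian Hall $R_3(q)$-subgroup of order $k_3(q)$; the case $R_6(q)$ is symmetric.

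The main obstacle is the structural fact used in the third paragraph: one must verify, from the explicit torus and centralizer classification of $D_4(q)$, that no element centralizer can be coprime to the $R_6(q)$-part of $|G|$ while only partially reducing the $R_3(q)$-part (and the mirror statement). Everything else — the reductions via Lemma~\ref{GorBig}, Navarro's criterion, and the commuting-Sylow criterion of \cite{vse} — is routine bookkeeping with the $\{r,s\}^*$ conditions; the only other point calling for a little care is making the final assembly of the pairwise permutable abelian Sylow subgroups into a single abelian Hall subgroup precise, which the displayed centrality argument handles.
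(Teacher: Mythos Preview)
Your approach is correct and essentially the same as the paper's: establish abelian Sylow $r$-subgroups for $r\in R_3(q)\cup R_6(q)$ via Lemma~\ref{GorBig} and the $\{r,s\}^*$ condition (the paper leaves the passage through $(Ind_G(g))_r=1$ and Lemma~\ref{navarro} implicit, which you spell out), then upgrade to $(Ind_G(h))_{R_i(q)}=1$ using the centraliser data of $D_4(q)$, and finish with~\cite{vse}. The paper's middle step is more economical---once $(Ind_G(h))_r=1<|L|_r$ for an $r$-element $h$, Lemma~\ref{Neda}(i),(iii) immediately gives $(Ind_G(h))_{R_i(q)}=1$, so your detour through $k_6(q)\mid|x^G|$ and the extra hypothesis ``divisible by a prime of $R_6(q)$'' are unnecessary---and the full theorem of~\cite{vse} already produces a nilpotent (hence here abelian) Hall $\pi$-subgroup from the prime-power class-size condition, so the ad~hoc Wielandt-type gluing you describe is not needed.
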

\begin{proof}
Note that $2,3,5\not\in R_3(q)\cup R_6(q) $. It follows from Lemmas \ref{good} and \ref{GorBig} that for any $r\in R_3(q)\cup R_6(q)$ a Sylow $r$-subgroup of $G$ is abelian. Hence for any $r$-element $h$ we have $(Ind_G(h))_r=1$. It follows from Lemma \ref{Neda} that $(Ind_G (h))_{R_i(q)}=1$, where $i$ such that $r\in R_i(q)$. It follows from Lemma \ref{pat} that the Hall $R_i(q)$-subgroup exists and abelian.

\end{proof}

\begin{lem}\label{factorD4}
The group $G$ includes a normal subgroup $K$ such that $S\leq \overline{G}=G/K\leq Aut(S)$, where $S$ is non abelian simple group, and $|\overline{G}|$ is a multiple of $k_3(q)k_6(q)$.
\end{lem}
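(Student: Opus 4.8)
The plan is to analyze the structure of $G$ under the hypothesis $N(G)=N(L)$ with $L=D_4(q)$, $q>3$, and extract a nonabelian simple section carrying the large primitive prime divisors. First I would look at a chief series of $G$ and locate where the primes in $R_3(q)\cup R_6(q)$ live. Lemma~\ref{HollD4R3R6} gives abelian Hall $R_3(q)$- and $R_6(q)$-subgroups of $G$, and Lemmas~\ref{good} and~\ref{GorBig} give that for any $r$-element $g$ and $s$-element $h$ with $r\in R_3(q)$, $s\in R_6(q)$ we have $C_G(g)\cap C_G(h)=1$, together with $|G|_{\{r,s\}}=|G||_{\{r,s\}}$. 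So the primes $r_3(q)$ and $r_6(q)$ are nonadjacent in $GK(G)$, and in fact any element whose centralizer meets both a Sylow $r_3$- and a Sylow $r_6$-subgroup nontrivially is trivial.

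The key step is to show $G$ is not solvable and then pin down the simple section. If $G$ were solvable, a Hall $\{r_3(q),r_6(q)\}$-subgroup $H$ would exist (by Hall's theorem); since $C_G(g)\cap C_G(h)=1$ for the corresponding elements, $H$ would act on itself in a way forcing, via Lemma~\ref{Gore5hzOcomutantePstavtomor} applied to the abelian Sylow subgroups, a Frobenius-like configuration — but then a Frobenius complement has cyclic or quaternion Sylow subgroups, while $|G|_{r_3(q)}=k_3(q)_{r_3}$ and $|G|_{r_6(q)}=k_6(q)_{r_6}$ need not give cyclic Sylow subgroups, and more to the point the orders $k_3(q)$, $k_6(q)$ are large enough (both exceed $q$) that the arithmetic of $N(L)$ — which records $|L|$-information through $|G||=|G||_{\pi(G)}$ — contradicts the tight numerology of a solvable group. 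I would make this precise by comparing $|G||$ against $|L|$ using the fact (from \cite{Ch96} arguments, reproduced in the introduction) that the presence of nonadjacent vertices forces $|G|$ to be divisible by a recognizable chunk of $|L|$. Once solvability is excluded, take a chief factor $S_1\times\cdots\times S_k$ (each $S_i\cong S$ simple nonabelian) above the solvable radical; the disjointness of the centralizers of $r_3$- and $r_6$-elements, pushed through Lemma~\ref{factorKh}(ii), shows that $k_3(q)$ and $k_6(q)$ cannot both be absorbed into the solvable part nor split across several copies of $S$, so a single composition factor $S$ is divisible by both $k_3(q)$ and $k_6(q)$; setting $K$ to be the preimage of the centralizer-type normal subgroup and $\overline G=G/K$ with $S\le \overline G\le \Aut(S)$ finishes the claim, with $k_3(q)k_6(q)\mid |\overline G|$.

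The main obstacle I anticipate is ruling out that $k_3(q)$ and $k_6(q)$ distribute across \emph{two} distinct composition factors (or across a composition factor and the solvable radical). Here the crucial leverage is $C_G(g)\cap C_G(h)=1$ from Lemma~\ref{GorBig}: if $r_3(q)$ divided $|S|$ (or $|K|$) and $r_6(q)$ divided the order of a different factor, one could build commuting elements $g$ (an $r_3$-element) and $h$ (an $r_6$-element) from different components with $C_G(g)\cap C_G(h)\neq 1$ — a contradiction. Making the "build commuting elements from different normal pieces" argument airtight (choosing $g,h$ of coprime order lying in distinct direct factors, or one in $K$ and one mapping into $S$, and invoking Lemma~\ref{factorKh}(iii)) is the technical heart. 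The remaining step — that $k_3(q)k_6(q)$ actually divides $|\overline G|$ and not merely that the relevant primes do — follows because $|G||_{\{r,s\}}=|G||_{\{r,s\}}$ from Lemma~\ref{GorBig} forces the full $\{r_3(q),r_6(q)\}$-part of $|G|$ into the section, and one checks using Lemma~\ref{kqi} that $(k_3(q))_{r_3}=k_3(q)$ up to the trivial cofactor, similarly for $k_6$, so the whole of $k_3(q)k_6(q)$ is captured.
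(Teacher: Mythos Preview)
Your strategy is in the right neighborhood, but the step excluding multiple simple factors has a genuine gap. Your ``commuting elements from different components'' argument correctly shows that if some $S_a$ carries an $R_3(q)$-prime and a different $S_b$ carries an $R_6(q)$-prime, then the socle contains an element of order $r_3(q)r_6(q)$, contradicting Lemma~\ref{GorBig}. But this only forces $\pi(R)\cap(R_3(q)\cup R_6(q))\subseteq R_i(q)$ for a single $i$; it does \emph{not} force $k=1$. Nothing prevents $R=S_1\times S_2$ with both factors divisible by $R_3(q)$-primes and neither by any $R_6(q)$-prime --- then there are no $r_6$-elements in $R$ to pair with, and your argument is silent. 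The paper closes this case by a different mechanism: choose $g\in S_1$ with $(Ind_{S_1}(g))_{R_i(q)}=|S_1|_{R_i(q)}$ (such $g$ exists by the argument of Lemma~\ref{Order}); then $(Ind_R(g))_{R_3(q)\cup R_6(q)}=|S_1|_{R_i(q)}$ lies strictly between $1$ and $|R|_{R_i(q)}$, and via Lemma~\ref{factorKh} this produces $\alpha\in N(G)$ with $1<\alpha_{\{r_3(q),r_6(q)\}}<|G||_{\{r_3(q),r_6(q)\}}$, contradicting $G\in\{r_3(q),r_6(q)\}^*$. Your proposal never reaches this intermediate-index contradiction, and without it you cannot conclude $S\le\overline G\le\Aut(S)$.

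Two smaller issues. First, your nonsolvability argument is not executed: the paper passes to $\widetilde G=G/O_{(R_3(q)\cup R_6(q))'}(G)$, so a solvable minimal normal subgroup $X\le\widetilde G$ is an $R_i(q)$-group for one $i$; a Hall $R_j(q)$-subgroup $H$ (for $j\ne i$) acts fixed-point-freely on $X$, $XH$ is Frobenius, and hence $k_j(q)\le|H|\mid|X|-1$ with $|X|\le k_i(q)$, which is ruled out by the explicit values in Lemma~\ref{kqi}. Your appeal to ``cyclic or quaternion Sylow subgroups of Frobenius complements'' and ``tight numerology'' points vaguely in this direction but is not the mechanism and is not carried out. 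Second, your plan for the case ``one prime in $K$, the other in $S$'' via Lemma~\ref{factorKh}(iii) does not work as written: $K$ and a preimage of $S$ do not form a direct product, so there is no reason an $r_3$-element of $K$ commutes with a lift of an $r_6$-element of $\overline G$. The paper's passage to $\widetilde G$ sidesteps this entirely by stripping all $(R_3(q)\cup R_6(q))'$-primes from the bottom, so that every minimal normal subgroup already involves the primes you care about.
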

\begin{proof}
Let $\widetilde{G}=G/O_{(R_3(q)\cup R_6(q))'} $.
Suppose that $\widetilde{G}$ include non-trivial solvable minimal normal subgroup $X$. We have $\pi(X)\subseteq R_i (q) $, where $i\in \{3, 6\}$. Take a Hall $R_j(q)$-subgroup $H$ of $\widetilde{G}$, where $j\in \{3, 6\}\setminus\{i\}$. Therefor $XH$ is a Frobenius group with kernel $X$. Hence $N(\widetilde{G})$ contains a number is multiple of $|X|$. It follows that $|X|$ divides $k_i(q)$. Since $X$ is a Frobenius kernel of $XH$, we obtain $|X|-1$ is multiple of $|H|$. It follows from $|H|=|G|_{R_j(q)}\geq |L|_{R_j(q)}=k_j(q)$ that $k_j(q)$ divides $k_i(q)$; a contradiction with Lemma \ref{kqi}.

Let $R$ be the socle of $\widetilde {G}$. We get that $R=S_1\times...\times S_k$, where $S_1,...,S_k$ are non abelian simple groups.
Assume that $k> 1$. It follows from the definition that the order of $S_h$ is multiple of some number of $R_3(q)\cup R_6(q)$, for every $1\leq h\leq k$. Note that $\omega(\widetilde{G})\subseteq \omega(G)$. Hence $R$ contains no element of order $tr$, where $t\in R_3(q)$ and $r\in R_6(q)$. Thus, $\pi(R)\cap(R_3(q)\cup R_6(q))\subseteq R_i(q)$, where $i\in\{3,6\}$. The $S_1$ contains an element $g$ such that $(Ind_{R}(g))_{R_3 (q)\cup R_6(q)} =|S_1|_{R_3(q)\cup R_6(q )}$. We have $1<(Ind_{S_1}(g))_{R_3 (q)\cup R_6(q)}<|R|_{R_3(q)\cup R_6(q)}$. Similarly, as in Lemma \ref{Order} we can shows that $|R|=|R||$. In particular $|R|_{R_3(q)\cup R_6(q)}>(Ind_R(g))_{R_3(q)\cup R_6(q)}$; using Lemma \ref {factorKh} we get a contradiction.

Thus $k=1$ and $R\leq \widetilde{G}\leq Aut (R)$, where $R$ is a simple group. Since $|G|_{R_3(q)\cup R_6(q)}=|\widetilde{G}|_{R_3(q)\cup R_6(q )}$ we get $k_3(q)k_6(q)$ divides $|\widetilde{G}|$.

\end{proof}

Let $S$, $\overline{G}$ and $K$ be similar as in Lemma \ref{factorD4}.

\begin{lem}\label{AutD4}
$|Out(S)|$ is not a multiple of $k_3(q)$ and $k_6(q)$.
\end{lem}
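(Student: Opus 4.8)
The plan is to rule out the possibility that either $k_3(q)$ or $k_6(q)$ divides $|\Out(S)|$ by combining the size estimates established earlier with the structure of $\overline{G}$. Recall from Lemma~\ref{factorD4} that $k_3(q)k_6(q)$ divides $|\overline{G}|$ and that $S\leq\overline{G}\leq\Aut(S)$; since $|\Aut(S)|=|S|\cdot|\Out(S)|$, any prime power dividing $|\overline{G}|$ that does not divide $|S|$ must divide $|\Out(S)|$. So first I would suppose, for contradiction, that $k_i(q)$ divides $|\Out(S)|$ for some $i\in\{3,6\}$. The key point is that $\Out(S)$ is ``small'': for $S$ alternating, sporadic, or cyclic (none of which can occur here since such groups have $|\Out(S)|\leq 4$ and $k_i(q)>4$ for $q>3$), and for $S$ of Lie type in characteristic $p'$, $|\Out(S)|$ is bounded by a product of the diagonal, field, and graph automorphism contributions, which is at most roughly $d\cdot f\cdot 6$ where $q_0=p_0^{f}$ is the field size of $S$ and $d$ is small.

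Next I would use Lemma~\ref{Lieq3Aut} or a direct estimate: $|\Out(S)|$ divides a quantity that is polynomially bounded in $\log|S|$, whereas $k_i(q)$ is a primitive-part divisor of $q^{6}-1$ or $q^{3}-1$ of size comparable to $q^2$. Concretely, by Lemma~\ref{kqi} we have $k_3(q)=(q^2+q+1)/(3,q-1)\geq (q^2+q+1)/3$ and $k_6(q)=(q^2-q+1)/(3,q+1)\geq(q^2-q+1)/3$, so $k_i(q)>q^2/3$ for $q>3$. On the other hand, since $k_3(q)k_6(q)$ divides $|\overline{G}|\leq|\Aut(S)|<(|S|_{p_0})^{3.5}$ (by Lemma~\ref{Lieq3Aut}), the group $S$ cannot be too small relative to $L$; but $|\Out(S)|$ grows only like $\log|S|$ times a bounded factor. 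Comparing $k_i(q)>q^2/3$ with the at-most-logarithmic-in-$|S|$ bound on $|\Out(S)|$ forces $|S|$ to be astronomically large compared to $|L|=|D_4(q)|$, which then contradicts $|\overline{G}|$ dividing $|G|$ once one knows (as in the surrounding argument, via $|G||=|L||$ type equalities) that $|G|$ is controlled by $|L|$. A cleaner route: $k_i(q)$ dividing $|\Out(S)|$ means $k_i(q)\leq|\Out(S)|\leq 6f\cdot d$ where the field of $S$ has $p_0^f$ elements; but $p_0^f\leq|S|^{1/\prk(S)}$ roughly, and $|S|$ divides $|G|$ whose $\pi$-parts match those forced by $N(L)$, so $f$ is bounded in terms of $\log q$, making $6fd\ll q^2/3\leq k_i(q)$.

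The main obstacle will be making the bound on $|\Out(S)|$ precise without knowing yet exactly what $S$ is — in particular handling the case where $S$ itself is a group of Lie type over a very large field $q_0=p_0^{f}$ with small Lie rank, so that $f$ (and hence the field-automorphism part of $\Out(S)$) could a priori be large. The resolution is that in that regime $|S|$ is large, forcing $|G|$ to be large, but the matching $N(G)=N(L)$ pins down enough prime-power divisors of $|G|$ (via $|G||_{\{r_3(q),r_6(q)\}}=|L||_{\{r_3(q),r_6(q)\}}$ from Lemmas~\ref{good}, \ref{GorBig}, and more generally via Lemma~\ref{Order}-type reasoning) that $|G|$ cannot exceed a fixed polynomial in $q$; so $f=\log_{p_0}q_0$ is $O(\log q)$ and $|\Out(S)|=O(\log q)$, which is smaller than $k_i(q)\sim q^2/3$ for all $q>3$. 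I would organize the proof as: (1) reduce to $S$ of Lie type (small-$|\Out|$ groups are trivially excluded); (2) write $|\Out(S)|=d\cdot f\cdot g$ with $g\leq 6$ and bound $d$ by a small constant depending on the type; (3) bound $f$ by noting $p_0^{f\cdot\prk(S)}\lesssim|S|\leq|G|$ and that $|G|$ is polynomially bounded in $q$; (4) conclude $|\Out(S)|<k_i(q)$ for both $i=3$ and $i=6$ when $q>3$, contradicting the divisibility assumption.
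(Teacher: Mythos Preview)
Your broad strategy --- assume $k_i(q)\mid|\Out(S)|$, reduce to $S$ of Lie type, and then argue that $|\Out(S)|$ is too small --- matches the paper's. But the execution you sketch has a genuine gap: the inequality $|\Out(S)|\le 6fd\ll q^2/3\le k_i(q)$ is an \emph{asymptotic} statement and simply fails for the small values of $q$ (notably $q=4,5,7,8,9$) that the lemma must cover. For instance at $q=4$ one has $k_3(4)=7$, while for $S\simeq A_1(2^f)$ one has $|\Out(S)|=f$; the constraint you propose, $p_0^{f\cdot\prk(S)}\lesssim |S|\le |L|$, only gives $f\le 24$ (from $|S|_2\le|L|_2=2^{24}$), which does not force $f<7$. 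Likewise your dismissal of the diagonal contribution (``bound $d$ by a small constant depending on the type'') is unjustified: for $S\simeq A_m^{\varepsilon}(l)$ the diagonal part is $\gcd(m+1,l-\varepsilon 1)$, which can be as large as $m+1$ and is not a priori bounded independently of $S$.

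The paper closes these gaps not by a uniform logarithmic bound but by a sharper prime-by-prime argument: from $|S|\mid |\overline G|=|\overline G||\mid |G||=|L||=|L|$ one gets $|S|_k\le|L|_k$ for every prime $k$, and in particular $|S|_t\le|L|_p=q^{12}$ for the characteristic $t$ of $S$. One then compares this with the lower bound $|S|_t\ge l^{c}$ (where $l\ge t^{k_i(q)}$ once $k_i(q)$ divides the field-automorphism order), splitting into cases according to the Lie type and the exponent $c$. When this size comparison is not yet decisive --- precisely the low-rank, small-$q$ situations your asymptotic argument misses --- the paper invokes $\pi(S)\subseteq\pi(L)$ together with Zsigmondy primes (e.g.\ $127\in\pi(A_1(2^7))\setminus\pi(D_4(4))$) to finish. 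The diagonal case is handled separately by a direct estimate on $|S|_t$ when $r_i(q)\mid(m+1,l-\varepsilon1)$. Your plan would need to incorporate both the per-prime bound $|S|_t\le q^{12}$ and these Zsigmondy/prime-set arguments to become a proof; the purely size-based route you outline does not suffice.
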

\begin{proof}
Assume that there exists $i\in\{3,6\}$ such that $k_i(q)$ divides $|Out(S)|$.
Since $r_i(q)>5$, we get $S$ can not be isomorphic to a sporadic group or an alternating group. Hence $S$ is a group of Lie type over a field of order $l=t^r$ of characteristic $t$. From Lemmas \ref{Order} and \ref{factorKh} it follows that $|S|$ divides $|G||$. Since $|G||=|L||$, we obtain $|S|$ divides $|L|$. In particular, for any $k\in \pi (S) $, we have $|S|_k\leq|L|_k$.

Suppose that $S$ is a group of exceptional Lie type. From $2,3\not\in R_i (q) $, it follows that $k_i(q)$ divides order of the field automorphisms group of $S$. Hence $l\geq t^{k_i (q)}$. It follows from Lemma \ref{kqi} that $k_i(q)\in\{(q^2-q+1)/(3, q+1), (q^2+q+1)/(3, q-1)$. Note that for any $k\in\pi(L) $, we have $|L|_k<|L|_p$ and $|L|_p=q^{12}$.
If $S\not\simeq\ ^2G_2(l)$ then $|S|_t> l^4 $. Since $q>3$, we have $|S|_t\geq t^{4k_i (q)}>q^{12}=|L|_p$; a contradiction. If $S\simeq\ ^2G_2(l)$, and $l>2$ or $q>4$, then we obtain $|S|_t>|L|_p$. Assume that $l= 2, q=4$. Then $43 \in\pi(S)\setminus\pi(L)$; a contradiction.

Assume that $S$ is a classical group of Lie type and $k_i(q)$ divides order of the fields automorphism group. If $|S|_t>l^3$, then similarly as above we get a contradiction. Assume that $|S|_t=l^3$, then $S$ is isomorphic to one of the groups $A_2(l)$ or$\ ^2A_2(l)$. If $t>2$ or $q>4$, then $|S|_t>|L|_p$; a contradiction. We get $t=2, q=4$. In this case $k_3(4)=13, k_6(4)=7$. If $l \geq t^{13}$, then we obtain $|S|_t>|L|_p$; a contradiction. Hence $l=t^7$ and $127\in \pi(S) \setminus \pi (L)$; a contradiction.

Assume that $|S|_t=l^2$. Therefor $S$ is isomorphic to$\ ^2B_2(r)$. If $q>5$ or $t>3 $, then $|S|_t>|L|_p$; a contradiction. Assume that $t=3$ and $|S|_t\leq |L|_p $. Then $1093 \in\pi(S)\setminus \pi (L) $; a contradiction. Assume that $t=2$, then $127\in \pi(S)\setminus\pi(L)$; a contradiction.

Assume that $|S|_t=l$. Hence $S$ is isomorphic to $A_1(r)$. From the fact that $|S|_t\leq|L|_p$ it follows that $q<11$. Analyzing all possible variants similarly as above we arrive at a contradiction.

Thus, there exists $r_i(q)$, such that $r_i(q)$ divides order of the diagonal automorphisms group of $S$. In this case, $S$ is isomorphic to one of the groups $A^{\varepsilon}_m(l)$ where $\varepsilon \in \{+, -\} $. We have $r_i(q)$ divides $(n+1, l-(\varepsilon1))$. It is easy to shows that in this case $|S|_t\geq|L|_p$; a contradiction.

\end{proof}

\begin{lem}\label{Aut2D4}
The number $|\overline{G}|/|S|$ is not a multiple of $r_i(q)$, where $i\in\{3,6\}$.
\end{lem}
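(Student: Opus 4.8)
The plan is to argue by contradiction: suppose $r=r_i(q)$ divides $|\overline G|/|S|$ for some $i\in\{3,6\}$, and reach a contradiction by an analysis parallel to that of Lemma~\ref{AutD4}, strengthened by two extra numerical facts. First I would extract the structural consequences of this assumption. Since $\overline G\le Aut(S)$ acts faithfully on $S$, we have $\overline G/S\hookrightarrow Out(S)$, so $r$ divides $|Out(S)|$; and $r\ge 7$, because $r$ is a primitive prime divisor of $q^i-1$ with $i\in\{3,6\}$, hence $i\mid r-1$. Consequently $S$ is neither sporadic nor alternating, so $S=S(l)$ is simple of Lie type over a field of order $l=t^a$, and — $r$ being too large to divide the order of the graph automorphism group — $r$ divides the order of the group of diagonal or of field automorphisms of $S$.

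Next I would collect the numerical constraints. As in the proof of Lemma~\ref{AutD4}, Lemmas~\ref{Order} and \ref{factorKh} applied to the quotient $\overline G=G/K$ give $|\overline G|=|\overline G||$ and $|\overline G||\mid|G||=|L||=|L|$, hence $|\overline G|\mid|L|$; in particular $|S|\mid|L|$ and $|\overline G/S|\mid|L|/|S|$. Since $e(r,q)=i\in\{3,6\}$, the $r$-part of $|L|=|D_4(q)|$ coincides with the $r$-part of $q^6-1$, i.e.\ with $(k_i(q))_r$; and combining $N(G)=N(L)$ with Lemmas~\ref{good} and \ref{GorBig} (and that $r$ does not divide $|K|$) I get $|\overline G|_r=|G|_r=|L|_r=(k_i(q))_r$. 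Therefore $r\mid|\overline G/S|$ forces $(|S|)_r<(k_i(q))_r$: the primitive prime $r$ is not entirely absorbed into $|S|$. Finally, Lemma~\ref{factorD4} gives $k_3(q)k_6(q)\mid|\overline G|=|S|\cdot|\overline G/S|$ with $|\overline G/S|\mid|Out(S)|$.

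Then I would split into two cases. If $r$ divides the order of the diagonal automorphism group of $S$, then $S\simeq A^{\varepsilon}_m(l)$ with $r\mid\gcd(m+1,l-\varepsilon1)$; hence $m\ge r-1\ge 6$, so $|S|_t=l^{m(m+1)/2}\ge l^{21}$, and since $|S|\mid|L|$ and $|L|_k\le|L|_p=q^{12}$ for all $k$ we get $l^{21}\le q^{12}$, which bounds $l$ and $m$. Moreover $r\mid l-\varepsilon1$ gives $e(r,t)\in\{1,2\}$, which together with $e(r,q)=i$ severely constrains the relation of $l$ to $q$; I would then run through the remaining finitely many configurations and eliminate each using $\pi(S)\subseteq\pi(L)$, $|\overline G|\mid|L|$ and $k_3(q)k_6(q)\mid|\overline G|$. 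If instead $r$ divides the order of the field automorphism group of $S$, then $a\ge r\ge 7$, so $l\ge t^{7}$. For $S\not\simeq A_1(l)$ and $S\not\simeq {}^2B_2(l)$ one has $|S|_t\ge l^{3}\ge t^{21}$, and $|S|\mid|L|$ leaves only finitely many possibilities, cleared exactly as in Lemma~\ref{AutD4} with the help of $|\overline G|\mid|L|$ and $k_3(q)k_6(q)\mid|\overline G|$. For $S\simeq A_1(l)$ or $S\simeq {}^2B_2(l)$ I would instead use that $t\in\pi(S)\subseteq\pi(L)$ and that, since $a\ge 7$, one of $l-1$, $l+1$ — and hence $|S|$ — has a Zsigmondy prime $s$ with $e(s,t)\in\{a,2a\}$; then $s\in\pi(L)$ forces $e(s,q)\in\{1,2,3,4,6\}$, pinning $t$ to a power of $p$, after which $k_3(q)k_6(q)\mid|\overline G|=|S|\cdot|\overline G/S|$ with $|\overline G/S|\mid|Out(S)|$ becomes numerically impossible.

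I expect the main obstacle to be exactly this last bookkeeping. In Lemma~\ref{AutD4} the stronger hypothesis $k_i(q)\mid|Out(S)|$ forced $l\ge t^{k_i(q)}$ with $k_i(q)$ of size about $q^2$, so that the crude comparison $|S|_t\le|L|_p=q^{12}$ was already decisive. Here the hypothesis only yields $l\ge t^{7}$ with $r\ge 7$ possibly small, so the order estimate alone no longer suffices; one must genuinely exploit that $|\overline G|$ divides $|L|$ and that $k_3(q)k_6(q)$ divides $|\overline G|$ — equivalently, that the primitive primes $r_3(q)$ and $r_6(q)$ are already accounted for inside $|S|$ rather than inside $|Out(S)|$ — to dispose of the low-rank groups $A_1(l)$, ${}^2B_2(l)$ and $A^{\varepsilon}_m(l)$.
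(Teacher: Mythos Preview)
Your plan is a genuinely different route from the paper's, and considerably harder. The paper's proof is three lines: from $N(G)=N(L)$ and the structure of $L=D_4(q)$ one has $(Ind_{\overline G}(h))_{R_i(q)}\in\{1,k_i(q)\}$ for every $h\in\overline G$; this is precisely the hypothesis of Lemma~\ref{hz5} with $\pi=R_i(q)$, which then forces the \emph{entire} Hall $R_i(q)$-subgroup order $k_i(q)$ to divide either $|S|$ or $|\overline G/S|$. If some $r_i(q)$ divides $|\overline G/S|$, the second alternative holds, so $k_i(q)\mid|\overline G/S|\mid|Out(S)|$, contradicting Lemma~\ref{AutD4}. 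In other words, the paper upgrades ``a single primitive prime $r_i(q)$ divides $|Out(S)|$'' to ``the full $k_i(q)$ divides $|Out(S)|$'' via the all-or-nothing Lemma~\ref{hz5}, and then quotes Lemma~\ref{AutD4} verbatim.

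Your approach instead redoes the case analysis of Lemma~\ref{AutD4} under the weaker input $r\mid|Out(S)|$ with $r\ge 7$. The high-rank cases go through much as before, but---as you yourself anticipate---the low-rank cases $A_1(l)$, ${}^2B_2(l)$ and $A^{\varepsilon}_m(l)$ no longer collapse from the crude bound $|S|_t\le|L|_p=q^{12}$ alone, because $l\ge t^7$ is far weaker than $l\ge t^{k_i(q)}$. Your proposed remedy (Zsigmondy primes of $l\pm1$, pinning $t$ to a power of $p$, then squeezing $k_3(q)k_6(q)\mid|S|\cdot|\overline G/S|$ against $|\overline G/S|\mid|Out(S)|$) is plausible but not carried out, and it is not obvious that it closes cleanly for every small $r$ and every $q$: for instance when $r=7$, $t=p$, and $S\simeq A_1(p^{7n'})$ with $n'\mid n$, the constraints interact delicately with the factorisation of $k_3(q)$ and $k_6(q)$. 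None of this is needed once you notice Lemma~\ref{hz5}: it is exactly the tool designed to bypass this bookkeeping.
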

\begin{proof}
Assume that there exists $r_i(q)\in\pi(|\overline{G}|/|S|)$, where $i\in\{3,6\}$. Take a Hall $R_i(q)$-subgroup $H$ of $\overline{G}$. We have for every $h\in\overline{G}$, $(Ind_{\overline{G}}(h))_{R_i(q)}=1$ or $(Ind_{\overline{G}}( h))_{R_i(q)}=k_i(q)$. Using Lemma \ref{hz5}, we obtain $|H|$ divides $|S|$ or $|\overline{G}|/|S|$. Since $r_i(q)$ divides $|\overline{G}|/|S|$, we see that $k_i(q)$ divides $|\overline{G}|/|S|$; a contradiction with Lemma \ref{AutD4}.
\end{proof}

\begin{lem}\label{R4}
$R_4(q)\cap\pi(K)=\varnothing$.
\end{lem}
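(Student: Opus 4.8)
We want to show that no primitive prime divisor of $q^4-1$ lies in $\pi(K)$, where $K$ is the normal subgroup with $S\le \overline{G}=G/K\le \operatorname{Aut}(S)$ from Lemma~\ref{factorD4}. Suppose for contradiction that $s\in R_4(q)\cap\pi(K)$, and take an element $x\in K$ of order $s$. The guiding idea is that $s$ is not adjacent in $GK(L)$ to $r_3(q)$ or $r_6(q)$, since a semisimple element of $L=D_4(q)$ cannot have order divisible both by a primitive prime divisor of $q^4-1$ and by one of $q^3-1$ or $q^6-1$ (no maximal torus of $D_4(q)$ has order divisible by such a product; see \cite{Car1}). Hence for any $r\in R_3(q)\cup R_6(q)$ and any $r$-element $h$ of $G$ the product $sr\notin\omega(G)=\omega(L)$, so $C_G(h)$ contains no element of order $s$.

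First I would locate a suitable $R_3(q)$- or $R_6(q)$-element $h$ with large index. By Lemma~\ref{HollD4R3R6} the Hall $R_i(q)$-subgroups of $G$ are abelian for $i\in\{3,6\}$, so $(Ind_G(h))_{R_i(q)}=1$ for any such $h$. Using that $\overline{G}$ has order divisible by $k_3(q)k_6(q)$ and that by Lemmas~\ref{AutD4} and~\ref{Aut2D4} these factors lie in $|S|$, one finds an $r$-element $h\in G$ (for appropriate $r\in R_i(q)$) whose image in $S$ is a regular semisimple element, so that $(Ind_G(h))_t=|G|_t$ for the characteristic prime $t$ of $S$ — here Lemma~\ref{pchast} applied inside $S$, pulled back via Lemma~\ref{factorKh}, is the tool. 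Then consider the product $xh$: since $x\in K$, $h$ can be chosen to centralize $x$ or not, and the key point is that $C_G(xh)\le C_G(x)\cap C_G(h)$ when $(|x|,|h|)=1$ (Lemma~\ref{factorKh}(iii)). If $h$ centralizes $x$ we get an element of order $s$ in $C_G(h)$, contradicting $sr\notin\omega(L)$; so $h$ does not centralize $x$, which forces $(Ind_K(h))$, hence $(Ind_G(h))$, to pick up a factor of $s$ — but combined with $(Ind_G(h))_{R_i(q)}=1$ and the constraint $G\in\{r_3(q),r_6(q)\}^*$ from Lemma~\ref{good}, together with the list of possible indices from Lemma~\ref{Neda}, this produces a number in $N(G)=N(L)$ that $L$ does not possess.

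More concretely, the decisive estimate is on orders: by Lemmas~\ref{Order} and~\ref{factorKh}, $|S|$ divides $|G||=|L||$, which divides $|L|$, and $|\operatorname{Aut}(S)|<(|S|_t)^{3.5}\le (|L|_p)^{3.5}$ by Lemma~\ref{Lieq3Aut}. If $s\in\pi(K)$ then $|K|_s>1$, so $|G|_s=|K|_s\cdot|\overline{G}|_s$ forces $|L|_s$ to be strictly larger than what the torus structure of $D_4(q)$ over the subgroup $S$ can absorb; more precisely, $k_4(q)=(q^2+1)/(2,q+1)$ divides $|L|$ exactly, and $s\in R_4(q)$ already appears to full multiplicity in $|S|$ or in $\overline{G}/S$ by a Lemma~\ref{hz5}-type argument, leaving no room in $K$. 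So one shows $|G|_s$ would exceed $|L|_s$, the final contradiction.

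The main obstacle will be the middle step: pinning down precisely which element $h\in G$ to use and verifying that the index $Ind_G(xh)$ is genuinely a ``new'' number not in $N(L)$. This requires carefully matching the candidate indices in Lemma~\ref{Neda} (which are for $D_n(q)$ in general, to be specialized at $n=4$) against the index one builds from $C_G(xh)\le C_G(x)\cap C_G(h)$, and checking the $r_3(q)$- and $r_6(q)$-parts simultaneously. The bookkeeping is delicate because $x\in K$ may interact with the outer part of $\overline{G}$; one has to use that $s\notin\pi(\overline G)$ would already follow from Lemmas~\ref{hz5}, \ref{AutD4} and \ref{Aut2D4} adapted to the prime $4$ rather than $3,6$, so that the only possible home for $s$ is $K$, and then derive the contradiction from the order estimate above.
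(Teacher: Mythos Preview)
Your proposal is not a proof but a sketch of several competing strategies, none of which is carried through; you yourself flag ``the main obstacle'' as unresolved. Beyond this, there are concrete errors. First, your claim that $sr\notin\omega(G)$ for $s\in R_4(q)$ and $r\in R_3(q)\cup R_6(q)$ is unjustified: the hypothesis $N(G)=N(L)$ does not give $\omega(G)=\omega(L)$, and Lemma~\ref{GorBig} only controls the pair $\{r_3(q),r_6(q)\}$, not $\{r_4(q),r_i(q)\}$. So the whole ``$h$ cannot centralize $x$'' step is floating. Second, your order-count approach is wrong on two points: $|D_4(q)|$ is divisible by $(q^4-1)^2$, hence by $(k_4(q))^2$, not merely $k_4(q)$; and even if you did force $|G|_s>|L|_s$, that is no contradiction, since at this stage you only know $|G||_s=|L||_s$ and $|G|_s\ge|L|_s$, not equality. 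Third, Lemma~\ref{hz5} cannot be invoked ``adapted to the prime~$4$'': its hypothesis is that every $\alpha\in N(A)$ has $\alpha_\pi\in\{1,|H|\}$, and you have not shown this for $\pi=R_4(q)$.

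The paper's argument is structurally different. It passes to $\widetilde G=G/O_{R_4(q)'}(K)$ and takes a minimal normal subgroup $R\le\widetilde K$, which must meet $R_4(q)$. If $R$ were nonsolvable one produces an $R_4(q)$-element whose index has trivial $R_3(q)$- and $R_6(q)$-parts, contradicting Lemma~\ref{GorBig}; so $R$ is elementary abelian. Now fix an $R_6(q)$-element $h$. If $C_R(h)\neq 1$, one uses Lemma~\ref{Neda} to show $h$ (and hence the full preimage of $S$) centralizes $R$, forcing $k_3(q)\mid|\overline G/S|$, contrary to Lemma~\ref{AutD4}. If $C_R(h)=1$, then $|R|$ divides $\operatorname{Ind}_{\widetilde G}(h)$, so $|R|\le (k_4(q))^2$; but the preimage $H$ of $S$ acts faithfully on $R$, giving $|R|-1\ge|H|\ge 4k_3(q)k_6(q)>(k_4(q))^2$. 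This Frobenius-type size comparison is the missing idea in your sketch.
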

\begin{proof}
Assume that $R_4(q)\cap\pi(K)\neq\varnothing$. Let $\widetilde{\ }:G\rightarrow G/O_{R_4 (q)'}(K)$ be a natural homomorphism, $R<\widetilde{G}$ be a minimal normal subgroup. Take $h\in G$ such that $\pi(|h|)\subseteq R_6(q)$. It follows from Lemma \ref{Aut2D4} that $\overline{h}\in S$, where $\overline{h}\in\overline{G}$ is the image of the element $h$. By the definition we have $|\pi(R)\cap R_4 (q)|\neq\varnothing$. If $R$ is not solvable, then it is easy to show that $R$ contains an $R_4(q)$-element $x$ such that $(Ind_{\widetilde{G}}(x))_{R_3(q)}<|G|_{R_3(q)}$ and $(Ind_{\widetilde{G}}(x))_{R_6(q)}<|G|_{R_6(q)}$. We have $\pi(|x|)\cap\pi(O_{R_4 (q)'}(K))=\varnothing$. It follows from Lemma \ref{factorKh} that $G$ contains $y$ such that $(Ind_G(y))_{R_3 (q)\cup R_6 (q)}=1$; a contradiction with the fact that $G\in\{r_3(q), r_6(q)\}^*$ and Lemma \ref{GorBig}. Hence, $R$ is an elementary Abelian $r$-group, where $r\in R_4(q)$. Assume that $C_R(h)>1$. Let $x\in C_R(h)$. Then $(Ind_{\widetilde{G}}(x))_{R_6 (q)}=1$. It follows from Lemma \ref{Neda} that for any $\alpha\in N(G)$ such that $\alpha_{R_6(q)}=1$, we have $\alpha_{R_4 (q)}=|L|_{R_4(q)}$. Therefor $(Ind_{\widetilde{G}}(hx))_{R_4(q)}=(Ind_{\widetilde{G}}(x))_{R_4(q)}$. Thus $C_{\widetilde{G}}(h)$ contains a some Sylow $r$-subgroup of $C_{\widetilde {G}}(x)$. In particular, $h$ acts trivially on $R$. Since $R$ is normal subgroup of $\widetilde {G} $, we obtain any element conjugate with $h$ acts trivially on $R$. In particular, the minimal preimage $H<\widetilde{G}$ of $S$ acts trivially on $R$. If $|S|_{R_3(q)}>1$, then $G$ contains an element $y$ such that $Ind_G(y)_{R_3(q)\cup R_6(q)}=1$; a contradiction. Thus, $|\overline{G}/S|$ is a multiple of $k_3(q) $; a contradiction with Lemma \ref{AutD4}.

Therefor $C_{R}(h)=1$. Thus, $|R|$ divides $Ind_{\widetilde{G}}(h)$. Since $(Ind_{\widetilde {G}}(h))_r\leq|L|_{r}$, we obtain $|R|\leq(k_4(q))^2$. The group $H$ acts faithfully on $R$. Hence $|R|-1\geq|H|$. From Lemma \ref{Aut2D4} we get that $|H|\geq 4k_3(q)k_6(q)>(k_4(q))^2$; a contradiction.

\end{proof}

\begin{lem}\label{D4Alt}
The group $S$ is not isomorphic to an alternating group.
\end{lem}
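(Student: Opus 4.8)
Assume for contradiction that $S\simeq Alt_m$ for some $m\geq5$, and confront the order of $Alt_m$ with the order and the prime spectrum of $L=D_4(q)$. By Lemmas \ref{Order} and \ref{factorKh} the order of $S$ divides $|G||=|L||=|L|$, so $m!/2$ divides $|D_4(q)|$; hence every prime not exceeding $m$ divides $|D_4(q)|$ and $|Alt_m|_t\leq|D_4(q)|_t$ for every prime $t$. Combining Lemmas \ref{factorD4}, \ref{AutD4} and \ref{Aut2D4}: the prime divisors of $k_3(q)$ are precisely the members of $R_3(q)$, none of which divides $|\overline{G}|/|S|$, while $k_3(q)$ divides $|\overline{G}|$; therefore $k_3(q)$ divides $|S|=m!/2$, likewise $k_6(q)$ divides $m!/2$, and since $R_3(q)$ and $R_6(q)$ are disjoint even $k_3(q)k_6(q)$ divides $m!/2$. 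Using Lemma \ref{R4} and the fact that a prime of $R_4(q)$ is at least $5$ and so cannot divide $|Out(Alt_m)|\leq4$, one gets $R_4(q)\subseteq\pi(S)$ as well. Fixing $r_3\in R_3(q)$ and $r_6\in R_6(q)$, both primes $\geq7$ because $q>3$, we see that $R_3(q)\cup R_4(q)\cup R_6(q)$ consists of primes lying in $[7,m]$, so $m\geq7$.

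Now I extract arithmetic restrictions. By Lemma \ref{HollD4R3R6} a Sylow $r$-subgroup of $S$ is a subquotient of a Hall $R_i(q)$-subgroup of $G$ and is therefore abelian for every $r\in R_3(q)\cup R_6(q)$; hence $m<r^2$ for all such $r$, which gives $v_r(m!/2)=\lfloor m/r\rfloor$ and, together with $k_i(q)\mid m!/2$ and $|Alt_m|_r\leq|D_4(q)|_r=|\Phi_i(q)|_r$, the exact equalities $v_r(\Phi_3(q))=\lfloor m/r\rfloor$ for $r\in R_3(q)$ and $v_r(\Phi_6(q))=\lfloor m/r\rfloor$ for $r\in R_6(q)$. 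By Lemmas \ref{good} and \ref{GorBig} the group $G$ has no element of order divisible by $r_3r_6$, and since orders of elements of the section $S$ divide orders of elements of $G$, neither does $Alt_m$; as a product of two disjoint cycles of the odd prime lengths $r_3$ and $r_6$ is an even permutation, this forces $m<r_3+r_6$. Finally, comparing $2$-parts when $q$ is odd and $3$-parts (or, via Lemma \ref{Lieq3}, the $p$-part) when $q$ is even with the corresponding part of $|D_4(q)|$ yields an explicit bound $m\leq c\log q$ with an absolute constant $c$.

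It remains to close the loop. Writing $q^2+q+1=3^{\varepsilon}\prod_{r\in R_3(q)}r^{\lfloor m/r\rfloor}$ and $q^2-q+1=3^{\varepsilon'}\prod_{r\in R_6(q)}r^{\lfloor m/r\rfloor}$, with $\varepsilon,\varepsilon'\in\{0,1\}$ and every occurring prime satisfying $\sqrt m<r\leq m$, these identities are highly rigid: they say that $q^2+q+1$ and $q^2-q+1$ are simultaneously $m$-smooth with prescribed small exponents, while at the same time $k_3(q)k_6(q)\geq(q^4+q^2+1)/3$ divides $m!/2$ and $m\leq c\log q$. I expect this to be incompatible with $q$ large: the Nagell--Ljunggren type rigidity of $(q^3-1)/(q-1)$, together with its counterpart for $q^2-q+1$, should bound first $m$ and then $q$ by explicit constants, after which the finitely many surviving pairs $(q,m)$ are eliminated by a direct check — for instance using that a primitive prime divisor of $q^5-1$ always exists, has multiplicative order $5$ modulo itself and hence divides none of $q^i-1$ for $i\in\{1,2,3,4,6\}$, so does not divide $|D_4(q)|$, forcing $m<r_5(q)$, which already disposes of the bulk of the residual cases. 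The main obstacle is exactly this last, uniform step — turning the smoothness and exact-exponent constraints on $q^2\pm q+1$ into an absolute bound on $m$ — and the case split between $p$ odd and $p=2$ in the upper bound on $m$ requires separate care.
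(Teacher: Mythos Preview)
Your proof is not complete, and you say so yourself: the last paragraph is a sketch of a hoped-for analytic argument (smoothness of $q^2\pm q+1$, Nagell--Ljunggren, case checks) that you do not carry out. The actual difficulty you identify --- turning the constraints into an absolute bound on $m$ --- is real for the route you chose, but it evaporates once you use the full strength of Lemma~\ref{HollD4R3R6} rather than only its Sylow consequence.

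The point you missed is this. Lemma~\ref{HollD4R3R6} says that $G$ has an abelian Hall $R_i(q)$-subgroup for $i\in\{3,6\}$; since $S\trianglelefteq\overline{G}=G/K$, the image of such a Hall subgroup meets $S$ in an abelian Hall $R_i(q)$-subgroup of $S\simeq Alt_m$. Now invoke the Hall--Thompson results \cite{Hall,Tom66}: a Hall $\pi$-subgroup of $Alt_m$ can be abelian only when $|\pi\cap\pi(Alt_m)|\leq1$. As every prime of $R_i(q)$ divides $|S|$, this forces $|R_3(q)|=|R_6(q)|=1$; in particular $k_3(q)$ and $k_6(q)$ are each powers of a single prime. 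This is enormously stronger than your ``$m<r^2$ for each $r\in R_i(q)$'' and replaces all the smoothness heuristics by a rigid arithmetic fact.

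From here the argument is short. With $R_3(q)=\{r_3\}$ one has $m\geq r_3$ (since $r_3\mid|Alt_m|$), and the paper's bound $m\geq\max\bigl(k_3(q),k_6(q)\bigr)$ follows; for $q\geq5$ this already gives $|Alt_m|>|D_4(q)|$, contradicting $|S|\mid|L|$. For $q=4$ one uses Lemma~\ref{R4}: $R_4(4)=\{17\}$ and $k_4(4)^2=17^2$ must divide $|S|$, so $m\geq34$, again $|Alt_m|>|L|$. No analytic input is needed.
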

\begin{proof}
Assume that $S\simeq Alt_m$ for some $m>5$. From Lemmas \ref{factorD4} and \ref{R4} it follows that $|\overline{G}|$ is a multiple of $(k_4 (q))^2k_3(q)k_6(q)$. Since $|Out(S)|\leq4$ and $|S|$ is a multiple to $3$ and $4$, we obtain $|S|$ is divisible by $12(k_4(q))^2k_3(q)k_6(q)=(q^2+1)^2(q^4+q^2+1)$.
From Lemma \ref{HollD4R3R6}, we have that a Halls $R_3(q)$-subgroup and $R_6(q)$-subgroup of $S$ are abelian. Lemmas \cite{Hall} and \cite{Tom66} implies that a Hall $\pi$-subgroup of $Alt_m$ is abelian only if $|\pi\cap\pi(Alt_m)|=1$. Hence $|R_3(q)|=|R_6(q)|=1$, and $m\geq max((q^2+q+1)/(3, q-1), (q^2-q+1)/(3, q+1))$. If $q\geq 5$, then $|S|>|L| $. Hence $S$ contains an element $h$ such that $Ind_S(h)$ does not divide any number from $N(L)$; a contradiction. If $q=4$ then $R_4(q)=\{17\}$. Hence $m>34$ and $|S|>|L|$; a contradiction.

\end{proof}

\begin{lem}\label{Sporadic}
The group $S$ is not isomorphic to any of the sporadic groups.
\end{lem}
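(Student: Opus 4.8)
The plan rests on facts assembled from the previous lemmas. By Lemmas~\ref{factorD4}, \ref{AutD4} and~\ref{Aut2D4} the number $k_3(q)k_6(q)$ divides $|S|$, and by Lemmas~\ref{factorD4} and~\ref{R4} the number $(k_4(q))^2k_3(q)k_6(q)$ divides $|\overline{G}|$; since $|\overline{G}/S|\le|\Out(S)|\le 2$ for a sporadic group while $k_3(q),k_4(q),k_6(q)$ are odd and pairwise coprime, it follows that $(k_4(q))^2k_3(q)k_6(q)$ divides $|S|$. On the other hand, by Lemmas~\ref{Order} and~\ref{factorKh} the order $|S|$ divides $|G||=|L||$, hence divides $|L|=|D_4(q)|$. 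Finally, by Lemma~\ref{GorBig} the group $G$ has no element of order $r_3(q)r_6(q)$; since any preimage in $G$ of an element of order $r_3(q)r_6(q)$ in $\overline{G}$ has order divisible by $r_3(q)r_6(q)$, the group $\overline{G}$, hence its section $S$, has no such element, so $r_3(q)$ and $r_6(q)$ are non-adjacent in $GK(S)$. Note that since $q>3$ we have $r_3(q),r_6(q)\ge 7$ and $r_4(q)\ge 5$, and $r_3(q),r_4(q),r_6(q)$ are pairwise distinct.

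The first step is to reduce $q$ to a short explicit list for each $S$. From $(k_4(q))^2k_3(q)k_6(q)\le|S|\le|\mathbb{M}|<10^{54}$ together with $(k_4(q))^2k_3(q)k_6(q)\ge(q^2+1)^2(q^4+q^2+1)/12$ one obtains an absolute bound $q<q_0$. This range is then cut down drastically using prime availability: as $k_3(q)=(q^2+q+1)/(3,q-1)$, $k_6(q)=(q^2-q+1)/(3,q+1)$ and $k_4(q)=(q^2+1)/(2,q+1)$ all divide $|S|$, every prime divisor of $q^2+q+1$, of $q^2-q+1$ and of $q^2+1$ must belong to the fixed finite set $\pi(S)$, whose largest element is at most $71$; moreover $(k_4(q))^2$ divides $|S|$, which further restricts the prime powers occurring in $q^2+1$. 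Enumerating the prime powers $q<q_0$ for which $q^2+q+1$, $q^2-q+1$ and $q^2+1$ are simultaneously $\pi(S)$-smooth with the required multiplicities — a finite search, performed in \cite{GAP} — leaves only finitely many pairs $(S,q)$.

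Each surviving pair is then excluded, in nearly all cases by verifying $|S|\nmid|D_4(q)|=\dfrac{q^{12}(q^2-1)(q^4-1)^2(q^6-1)}{(4,q^4-1)}$. Three mechanisms suffice: the characteristic $p$ lies in $\pi(S)$ but $|S|_p>q^{12}=|D_4(q)|_p$; or some prime $\ell\in\pi(S)$ cannot divide $|D_4(q)|$ at all because $e(\ell,q)\notin\{1,2,3,4,6\}$ — in particular a prime $\ell$ with $3,4,6\nmid\ell-1$ (for instance $11,23,47,59,71$) divides $|D_4(q)|$ only when $\ell=p$ or $\ell\mid q^2-1$, which the short list of $q$ never realizes, so every sporadic group divisible by such an $\ell$ is eliminated, and the remaining groups ($J_2$, $J_3$, $He$, $Ru$, $Th$) succumb to the same divisibility-and-multiplicity bookkeeping; or, in the few residual cases, $r_3(q)r_6(q)$ turns out to be an element order of $S$, contrary to the non-adjacency established above. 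I expect the main obstacle to be the bookkeeping for the group of largest order, the Monster $\mathbb{M}$, which yields the largest $q_0$ and the longest smooth-number search (the Baby Monster and the large Fischer groups being smaller versions of the same analysis); a secondary nuisance is that $k_4(q)$ need not be a prime, so prime powers rather than primes must be tracked throughout.
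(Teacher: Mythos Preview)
Your proposal is correct and follows essentially the same approach as the paper: both use that $(k_4(q))^2k_3(q)k_6(q)$ (equivalently $(q^2+1)^2(q^4+q^2+1)$ up to a bounded factor) divides $|\overline{G}|$ while $|\overline{G}|$ divides $|L|$, and then reduce to a finite check against the sporadic groups via \cite{Atlas}. The paper's proof is a one-line appeal to these two divisibility facts and the Atlas; your version spells out the verification strategy in detail and adds the non-adjacency of $r_3(q)$ and $r_6(q)$ in $GK(S)$ as an auxiliary constraint, which the paper does not invoke.
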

\begin{proof}
The assertion of the lemma follows from the fact that $(q^2+1)^2(q^4+q^2+1)$ divides $|\overline{G}|$, $|\overline{G}|$ divides $|L|$ and \cite{Atlas}.
\end{proof}
\begin{lem}\label{R1R2}
$(R_1(q)\cup R_2(q))\cap\pi(K)=\varnothing$.
\end{lem}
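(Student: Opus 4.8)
The plan is to argue by contradiction, following the template of Lemma~\ref{R4}. Suppose $s\in(R_1(q)\cup R_2(q))\cap\pi(K)$, put $\pi=R_1(q)\cup R_2(q)$, and pass to $\widetilde G=G/O_{\pi'}(K)$; here $O_{\pi'}(K)$ is an $s'$-group with no $r_3(q)$- or $r_6(q)$-elements, since $K$ is an $(R_3(q)\cup R_6(q))'$-group. Choose a minimal normal subgroup $R$ of $\widetilde G$ contained in the image of $K$ with $s\mid|R|$; then $R=R_1\times\dots\times R_k$ with the $R_i$ isomorphic simple groups transitively permuted by $\widetilde G$. Throughout I will use: that the Sylow $r_3(q)$- and $r_6(q)$-subgroups of $G$ are cyclic (by Lemma~\ref{good}, $\alpha_{r_j(q)}\in\{1,k_j(q)\}=\{1,|L|_{r_j(q)}\}$ for all $\alpha\in N(G)$ and $j\in\{3,6\}$, so Lemma~\ref{hz2} applies as $D_4(q)$ is not of type $A_1$); that $|G|_{R_3(q)\cup R_6(q)}=k_3(q)k_6(q)$ divides $|S|$ (by Lemmas~\ref{kqi},~\ref{good},~\ref{GorBig},~\ref{factorD4},~\ref{AutD4},~\ref{Aut2D4}); and that, as in the proof of Lemma~\ref{D4Alt}, $(k_4(q))^2k_3(q)k_6(q)$ divides $|\overline G|$, so that $|S|\ge(k_4(q))^2k_3(q)k_6(q)/|Out(S)|$ is comparable to $|L|$ — in particular $S$ is a group of Lie type (Lemmas~\ref{D4Alt},~\ref{Sporadic}) with $r_3(q)$ and $r_6(q)$ non-adjacent in $GK(S)$, and $|S|$ divides $|L|$ (Lemmas~\ref{Order},~\ref{factorKh}).

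First I would eliminate the non-solvable case: if the $R_i$ are non-abelian then, exactly as in the proofs of Lemmas~\ref{factorD4} and~\ref{R4}, one finds inside $R$ an element whose $R$-class size is, via Lemma~\ref{factorKh}(ii), a proper divisor of a member of $N(G)=N(L)$ with trivial $\{r_3(q),r_6(q)\}$-part, or one contradicts $|R|=|R||$ (established as in Lemma~\ref{Order}) dividing $|L|$; either way this contradicts $G\in\{r_3(q),r_6(q)\}^*$. Hence $R$ is an elementary abelian $s$-group with $s\mid q^2-1$. By symmetry it suffices to treat $s\in R_2(q)$, i.e.\ $s\mid q+1$; the case $s\in R_1(q)$ is identical after replacing $r_6(q)$ by $r_3(q)$ and Lemma~\ref{Neda}(i) by Lemma~\ref{Neda}(iii).

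Fix a cyclic Sylow $r_6(q)$-subgroup $\langle h\rangle$ of $G$ and use the coprime decomposition $R=C_R(h)\times[R,h]$ (Lemma~\ref{Gore5hzOcomutantePstavtomor}). If $C_R(h)\neq1$, take a nontrivial $s$-element $x$ of $R$ centralised by $h$ (lifted to $G$ by coprimality): since $\langle h\rangle$ is a full abelian Sylow $r_6(q)$-subgroup contained in $C_G(hx)=C_G(h)\cap C_G(x)$, we get $(Ind_G(hx))_{r_6(q)}=1$, so Lemma~\ref{Neda}(i) identifies $Ind_G(hx)$ with one of the tabulated index formulas for $D_4(q)$. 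A direct computation of their $R_2(q)$-parts shows this part equals $1$ in every case except the one coming from $|GU_1(q^3)|(q+1)$, where it equals $(q+1)_s^2$. When it is $1$, the argument of Lemma~\ref{R4} applies verbatim: $R$ lies in a Sylow $s$-subgroup of $C_G(x)$ that lies in $C_G(h)$, so $h$ — and hence, by $\widetilde G$-conjugacy, the socle $S$ — centralises $R$; then a nontrivial $s$-element of $R$ is centralised by a preimage of $S$, whose order is divisible by $k_3(q)k_6(q)=|G|_{R_3(q)\cup R_6(q)}$, producing a class of $G$ of size coprime to $r_3(q)r_6(q)$ and contradicting $G\in\{r_3(q),r_6(q)\}^*$ (and, on unwinding, contradicting Lemmas~\ref{AutD4}--\ref{Aut2D4}, since then $k_3(q)$ or $k_6(q)$ would divide $|\overline G|/|S|$). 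This step uses only $G\in\{r_3(q),r_6(q)\}^*$, not Lemma~\ref{GorBig}, so it survives $s\in\{3,5\}$.

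This leaves two sub-cases — $C_R(h)=1$, and $C_R(h)\neq1$ with the $|GU_1(q^3)|$-form — in both of which $|R|$ divides an index of $G$ whose $R_2(q)$-part is $1$ or $(q+1)_s^2$; since $R$ is an $s$-group this gives $|R|\le(q+1)_s^2<q^2$ (and $|R|=1$, an immediate contradiction, when the part is $1$). Now $R$ is a nontrivial elementary abelian $s$-group of order less than $q^2$; if the socle $S$ does not centralise $R$ then $S$ embeds in $GL_m(s)$ with $s^m=|R|<q^2$, i.e.\ $m<2\log_s q+2$. The plan is to contradict this via the standard lower bounds for the minimal degree of a faithful projective representation of $S$ — the Landazuri--Seitz--Zalesskii bounds when $s$ is coprime to the defining characteristic of $S$, and the natural-module bound in defining characteristic: both force $m$ to exceed the merely logarithmic-in-$q$ bound $2\log_s q+2$ once $|S|\ge(k_4(q))^2k_3(q)k_6(q)/|Out(S)|$ is imposed (in the cross-characteristic case the bound grows exponentially in the Lie rank of $S$; in the defining-characteristic case $|R|<q^2$ already bounds a fixed power of the defining field size of $S$, forcing $|S|$ far below $q^8$). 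A bounded list of small $q$ then remains, to be checked directly with \cite{GAP} and \cite{Atlas}. The main obstacle is precisely this last step: in contrast with the $R_4(q)$ case — where $|L|_{r_4(q)}=(k_4(q))^2$ is small enough that the crude count $|R|-1\ge|H|$ of Lemma~\ref{R4} closes the argument — here $|L|_s$ has order $(q\mp1)^4$, so one is forced to route the proof through the representation theory of the (large) simple group $S$ together with an explicit check of finitely many exceptional configurations, and to verify carefully the $R_2(q)$-part computation for Lemma~\ref{Neda}(i) (and, symmetrically, the $R_1(q)$-part for Lemma~\ref{Neda}(iii)).
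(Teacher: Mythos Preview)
Your overall architecture (pass to a minimal normal $s$-subgroup, split by coprime action, invoke Lemma~\ref{Neda} to control the $s$-part of class sizes of $r_6(q)$-elements) is reasonable, but there is a genuine gap and you are missing the idea that makes the paper's argument short.

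\medskip
\textbf{The gap.} In the sub-case ``$C_R(h)\neq1$ with the $|GU_1(q^3)|$-form'' you assert that ``$|R|$ divides an index of $G$ whose $R_2(q)$-part is $1$ or $(q+1)_s^2$'', and conclude $|R|\le(q+1)_s^2$. This is not justified. When $C_R(h)\neq1$ you know only that $|[R,h]|=|R:C_R(h)|$ divides $(Ind_G(h))_s\le(q+1)_s^2$ and that $|C_R(h)|\le|C_G(h)|_s$; the latter can be as large as $(q+1)_s^2$ (indeed in the $|GU_1(q^3)|$ configuration $|C_G(h)|_s=(q+1)_s^2$), so at best you get $|R|\le(q+1)_s^4$, not $(q+1)_s^2$. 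Your Lemma~\ref{R4}-style step (``a Sylow $s$-subgroup of $C_G(x)$ lies in $C_G(h)$'') also fails here: it needs $(Ind_G(hx))_s=(Ind_G(x))_s$, and in this lemma, unlike in Lemma~\ref{R4}, Neda(i) allows two different $s$-parts for class sizes with trivial $r_6(q)$-part, so that equality is not forced. Consequently the representation-theory endgame is being fed an unsupported bound on $|R|$, and even with the corrected bound $(q+1)_s^4$ the Landazuri--Seitz step plus the residual finite check are not actually carried out.

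\medskip
\textbf{What the paper does instead.} The paper avoids all of this by working with the \emph{full} Hall $R_i(q)$-subgroup $H$ (of order $k_i(q)$), not a single Sylow $r_i(q)$-subgroup. First it rules out $S\simeq A_1(t^m)$ with $t\in R_3(q)\cup R_6(q)$ by a size count; then Lemmas~\ref{hz2}, \ref{hz4} (applied to $S$, which is of Lie type by Lemmas~\ref{D4Alt}, \ref{Sporadic}) together with Lemma~\ref{HollD4R3R6} force $H$ to be \emph{cyclic}. Now take $h$ a generator of $H$ in $G/R$ and use Lemma~\ref{Gore5hzOcomutantePstavtomor}: one checks that no nontrivial element of $H$ can centralise $X$, so $h$ acts fixed-point-freely on $[X,H]$ and hence $|[X,H]|>|H|=k_i(q)$. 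But $|[X,H]|$ divides $(Ind_{G/R}(h))_r$, which Lemma~\ref{Neda} bounds by (roughly) $(q\pm1)^2/4$; since $k_i(q)\ge(q^2-q+1)/3$ this is already a contradiction for $q>3$, with no case split on $C_R(h)$, no appeal to minimal-degree bounds for $S$, and no residual computer check. The point you flagged as ``the main obstacle'' --- that $|L|_s\sim(q\mp1)^4$ is too large for the crude count of Lemma~\ref{R4} --- is exactly what the cyclicity of the whole Hall subgroup circumvents: you are comparing the Neda bound not with the order of a Sylow $r_i(q)$-subgroup but with $k_i(q)$, which is of size $\approx q^2$ and wins outright.
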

\begin{proof}
Assume that there exists $r\in(R_1(q)\cup R_2(q))\cap\pi(K)$. Let $R=O_{r'}(G)$, $X\unlhd K/R $ be a minimal normal subgroup. Assume that $S\simeq A_1(t^m)$, where $t\in R_3(q)\cup R_6(q)$. We have $|\overline{G}|\leq2mt^m(t^m-1)^2$, $|S|_t=k_j(q)$ where $j\in\{3,6\}$. It follows from Lemmas \ref{factorD4} and \ref{R4} that $|\overline{G}|$ is divisible by $(k_4(q))^2k_3(q)k_6 (q)$; a contradiction. Hence $S\not\simeq A_1(t^m)$ where $t\in R_3(q)\cup R_6(q)$.

It follows from Lemmas \ref{hz2} and \ref{hz4} that for any $t\in R_3(q)\cup R_6(q)$ a Sylow $t$-subgroup of $\overline{G}$ is cyclic. It follows from Lemma \ref{HollD4R3R6} that a Hall $R_3(q)$-subgroup and $R_6(q)$-subgroup of $\overline{G}$ are cyclic. Let $i=3$ if $r\in R_1(q)$ and $i=6$ if $r\in R_2(q)$, $H\in Hall_{R_i(q)}(G/R)$. It follows from Lemma \ref{Neda} that if $r\neq 2$ then for any $x\in H $ we have $(Ind_{G/R}(x))_{r}\leq 1/4(q +(-1)^e)^2$, and $(Ind_G(x))_{r}\leq 4(q+(-1)^e)^2$ if $r=2$, where $e\in\{1,2\}$. Let $h$ be the generating element of $H$, and let $x\in H$ be an element of order $t\in R_i(q)$. From Lemma \ref{Gore5hzOcomutantePstavtomor} it follows that $X=C_X(H)\times[X, H]$. Hence $(Ind_X(h))_r\geq|[X, H]|_r$. For any $x\in H$, we have that $X\neq C_X(x)$. Therefor $|[X, H]|>|H|$. If $r\neq 2$ then $|Ind_X(h)|_{r}|\geq|[X,H]|>|Ind_{G/R}(h)|_{r}$; a contradiction. If $r=2$, then $(Ind_S (\overline{h}))_2\geq 4$, hence $(Ind_{G/R}(h))_2\geq 4|[X, H]|$; a contradiction.
\end{proof}

From Lemmas \ref{factorD4}, \ref{R4} and \ref{R1R2} it follows that $\pi(K)\in\{p\}$.

\begin{lem}\label{LieType}
$S$ is a group of Lie type over the field of characteristic $p$.
\end{lem}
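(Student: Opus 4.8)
Since $S$ is non-abelian simple and, by Lemmas \ref{D4Alt} and \ref{Sporadic}, is neither an alternating nor a sporadic group, $S$ is a group of Lie type; so the content of the lemma is that its defining characteristic is $p$, and I would argue by contradiction, assuming $S$ is defined over a field of characteristic $t\neq p$. The two facts I would build on are an upper and a lower bound for $|S|$. For the upper bound: since $\pi(K)\subseteq\{p\}$ we have $S\le\overline G=G/K\le\Aut(S)$, and by Lemmas \ref{Order} and \ref{factorKh} together with $|G||=|L||$ and the general fact that $|L||$ divides $|L|$, the order $|\overline G|$, hence $|S|$, divides $|L|=|D_4(q)|$. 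For the lower bound: by Lemmas \ref{AutD4} and \ref{Aut2D4} neither $|\Out(S)|$ nor $|\overline G|/|S|$ is divisible by $k_3(q)$ or $k_6(q)$, so $k_3(q)k_6(q)$ divides $|S|$, and combining this with the divisibility already extracted in the proofs of Lemmas \ref{D4Alt} and \ref{Sporadic} one gets that $(q^2+1)^2(q^4+q^2+1)=\Phi_3(q)\Phi_4(q)^2\Phi_6(q)$ divides $|S|$.

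The plan is then to play the $t$-part of $|S|$ off against the rest. Writing $|D_4(q)|=\tfrac1d\,q^{12}\,\Phi_1(q)^4\Phi_2(q)^4\Phi_3(q)\Phi_4(q)^2\Phi_6(q)$ with $d=(q^4-1,4)$, one checks directly that for $t\neq p$ the number $|L|_t$ divides a fixed polynomial in $q$ of degree at most $4$: if $e(t,q)\in\{3,6\}$ then, as $r_3(q),r_6(q)>5$, the prime $t$ divides only one of the listed cyclotomic values and $|L|_t\le q^2+q+1$; in the remaining cases $t$ lies among the "low" classes and, using $(q^2-1)_t<2q$ and $(q^2+1)_t\le q^2+1$, one gets $|L|_t<16q^4$. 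Since $|S|_t$ divides $|L|_t$, Lemma \ref{Lieq3} gives $|S|<|S|_t^3\le|L|_t^3$, and when $e(t,q)\in\{3,6\}$ this already contradicts $|S|\ge\Phi_3(q)\Phi_4(q)^2\Phi_6(q)>q^8$. In the remaining cases I would pass from size to structure: each prime of $R_3(q)\cup R_4(q)\cup R_6(q)$ divides $|S|$, these primes fall into pairwise distinct cyclotomic classes relative to the field of $S$, and since the square $\Phi_4(q)^2$ divides $|S|$ as well, the group $S$ is forced to have Lie rank large enough that its unipotent part $|S|_t=(t^m)^{|\Phi^+(S)|}$ already exceeds the bound just obtained for $|L|_t$ — again a contradiction. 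Hence $t=p$.

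The main obstacle is exactly this last step. For $t\in\{2,3\}$, and more generally whenever $e(t,q)\in\{1,2,4\}$, the $t$-part of $|D_4(q)|$ really can be as large as a constant times $q^4$, so the crude estimate coming from Lemma \ref{Lieq3} is not by itself decisive; one must turn the divisibility of $|S|$ by the several primitive prime divisors $k_3(q),k_4(q)^2,k_6(q)$ into a genuine lower bound on $\prk(S)$ and then into a lower bound on $|S|_t$ that beats $|L|_t$. Making this precise requires the explicit information on torus orders and element orders of groups of Lie type in the spirit of Lemmas \ref{vas} and \ref{Neda}, together with the constraint (used already in Lemmas \ref{HollD4R3R6}, \ref{R1R2}) that a Hall subgroup built from $R_3(q)\cup R_6(q)$ is abelian, which limits which groups over $\mathbb F_{t^m}$ can occur. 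The finitely many small $q$ that might resist the uniform estimates (recall $q>3$) can, if necessary, be eliminated by the same bookkeeping applied to the explicit order of $D_4(q)$.
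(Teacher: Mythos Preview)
Your handling of the case $t\in R_3(q)\cup R_6(q)$ is close to the paper's (the paper uses the stronger lower bound $(k_1(q)k_2(q))^4(k_4(q))^2k_3(q)k_6(q)\mid|\overline G|$ coming from Lemma~\ref{R1R2}, together with Lemma~\ref{Lieq3Aut} for $\Aut(S)$ rather than Lemma~\ref{Lieq3} for $S$; note also that you only know $(k_4(q))^2\mid|\overline G|$, not $|S|$). The real divergence, and the genuine gap, is in the remaining cases $t\in R_1(q)\cup R_2(q)\cup R_4(q)$, where you yourself flag the obstacle and offer only a sketch: forcing $\prk(S)$ up from the several primitive divisors and then beating $|L|_t$ with $|S|_t$. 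That programme is not carried out, and it is not clear it can be made uniform---for instance you have not established that $r_4(q)\mid|S|$ rather than $|\overline G/S|$, nor that the primes $r_3(q),r_4(q),r_6(q)$ land in distinct $e(\cdot,t^m)$-classes, and even granting both, small-rank exceptional groups or $A_1$ over large fields are not automatically excluded by a rank count.

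The paper avoids this entirely and does \emph{not} argue via $\prk(S)$. For $t\in R_1(q)\cup R_2(q)$ it picks $r\in R_3(q)\cap\pi(S)$ (respectively $r\in R_6(q)\cap\pi(S)$) and invokes Lemma~\ref{pchast} to produce $h\in S$ with $(Ind_S(h))_t=|S|_t$ but $(Ind_S(h))_r<|S|_r$; the condition $G\in\{r_3(q),r_6(q)\}^*$ then forces $(Ind_S(h))_r=1$, and feeding this into the explicit list of class sizes in Lemma~\ref{Neda} bounds $(Ind_S(h))_t$ by $4(k_i(q))_t^2$, contradicting Lemma~\ref{Lieq3Aut}. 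For $t\in R_4(q)$ it takes a regular unipotent $h$ (Lemma~\ref{Regular}), so $(Ind_{\overline G}(h))_{t'}=|\overline G|_{t'}$; but Lemma~\ref{Neda} shows every $\alpha\in N(L)$ with $\alpha_{r_4(q)}<|L|_{r_4(q)}$ satisfies $\alpha_2<|L|_2$, which is incompatible with $(Ind_{\overline G}(h))_2=|\overline G|_2$. In short, the paper exploits the fine structure of $N(L)$ encoded in Lemma~\ref{Neda} at the level of individual class sizes, whereas your plan tries to get by with order divisibility alone---and that is precisely where your argument stalls.
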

\begin{proof}
From Lemmas \ref{D4Alt} and \ref{Sporadic} it follows that $S$ is a group of Lie type. Let $t$ be the characteristic of the field over which $S$ is given. From Lemma \ref {Lieq3}, it follows that $|S|<|S|^3_t$. It follows from Lemmas \ref{AutD4}, \ref{R4} and \ref{R1R2} that $|\overline{G}|$ is divisible by $(k_1(q)k_2(q))^ 4(k_4(q))^2k_3(q)k_6(q)$. Lemma \ref{Lieq3Aut} implies that $t\not\in R_3(q)\cup R_6(q)$. Assume that $t\in R_1(q) \cup R_2(q)$. If $t\in R_1(q)$ put $r\in R_3(q)\cup\pi(S)$. If $t\in R_2(q)$ put $r\in R_6(q)\cup\pi(S)$. From Lemma \ref{pchast}  it follows that $S$ contains an element $h$ such that $(Ind_S(h))_t=|S|_t$ and $(Ind_S(h))_r<|S|_r$. Since $G\in \{r_3(q), r_6(q)\}^*$, we obtain $(Ind_S(h))_r=1$. It follows from Lemma \ref{Neda} that $(Ind_S(h))_t\leq 4(k_i(q))^2_t$, where $i$ is such that $t\in R_i(q)$; a contradiction with Lemma \ref{Lieq3Aut}.

 Assume that $t\in R_4(q)$. From Lemma \ref{Regular} it follows that $S$ contains an element $h$ such that $(Ind_{\overline{G}}(h))_{t'}=|\overline{G}|_ {t'}$. Using Lemma \ref {Neda} we obtain $\alpha_2<|\overline{G}|_2$ for any $\alpha\in N(\overline{G})$ such that $\alpha_{r_4(q)}<|\overline{G}|_{r_4(q)}$; a contradiction.
Thus, $t=p$.

\end{proof}

\begin{lem}\label{Liner}
$S\simeq D_4(q)$.
\end{lem}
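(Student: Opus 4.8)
The plan is to identify the isomorphism type of $S$ from the divisibility data already accumulated. By Lemma~\ref{LieType}, $S=S(l)$ is a simple group of Lie type over a field of order $l=p^{m}$. By Lemmas~\ref{Order} and \ref{factorKh}, together with $\pi(K)\subseteq\{p\}$ and $|G||=|L||$, the order $|S|$ divides $|L||$, hence divides $|L|=|D_4(q)|$; in particular $|S|_{p}\leq q^{12}$ and every prime in $\pi(S)\setminus\{p\}$ lies in $R_1(q)\cup R_2(q)\cup R_3(q)\cup R_4(q)\cup R_6(q)$, i.e.\ its multiplicative order with respect to $q$ belongs to $\{1,2,3,4,6\}$. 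Moreover $|\overline{G}|=|S|\cdot|\overline{G}/S|$ with $|\overline{G}/S|\leq|\Out(S)|$, and, as established in the proofs of Lemmas~\ref{D4Alt} and \ref{LieType}, $|\overline{G}|$ is divisible by $B:=(k_1(q)k_2(q))^{4}k_4(q)^{2}k_3(q)k_6(q)$; by Lemmas~\ref{AutD4} and \ref{Aut2D4} no prime of $R_3(q)\cup R_6(q)$ divides $|\overline{G}/S|$, so $k_3(q)k_6(q)$ in fact divides $|S|$, and the prime divisors of $k_3(q)$ (resp.\ $k_6(q)$) all lie in $R_3(q)$ (resp.\ $R_6(q)$). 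One also notes $B\leq|L|_{p'}=\Phi_1(q)^{4}\Phi_2(q)^{4}\Phi_3(q)\Phi_4(q)^{2}\Phi_6(q)/d$ with $d=(4,q^{4}-1)$.

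First I would record a size estimate. Since $B=|L|_{p'}\cdot d/((2,q+1)^{2}(3,q-1)(3,q+1))$, we have $B\geq|L|_{p'}/c_{0}$ for an absolute constant $c_{0}$, and $|L|_{p'}$ is a polynomial in $q$ of degree $16$. Combining $B\mid|S|\cdot|\Out(S)|$ with Lemma~\ref{Lieq3} applied to $S$ (so $|S|<|S|_{p}^{3}$) and the crude bound $|\Out(S)|\leq c_{1}l^{2}\leq c_{1}|S|_{p}^{2}$ (coming from the structure of $\Out$ of a simple group of Lie type, or from Lemma~\ref{Lieq3Aut}), one obtains that $|S|_{p}$ exceeds a fixed positive power of $q$ (say $|S|_{p}>q^{3}$ for $q$ large) while $|S|_{p}\leq q^{12}$. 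Hence only finitely many pairs (Lie type of $S$, exponent $m$) survive: the Lie rank of $S$ and $m$ cannot both be large, nor can $m$ be too small compared with $n$.

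Next I would run through the surviving types using three obstructions. (i) \emph{Forbidden cyclotomic factors.} If $|S(l)|$ is divisible by a prime $r$ whose multiplicative order with respect to $q$ is outside $\{1,2,3,4,6\}$ — for instance a primitive prime divisor of a factor $\Phi_5$, $\Phi_7$, $\Phi_8$ or $\Phi_{12}$ of $|S(l)|$ — then $r\notin\pi(L)$, a contradiction; translating the cyclotomic degrees over $l$ into degrees over $q$ (via $e(r,q)=e(r,p)/\gcd(e(r,p),n)$ and $e(r,l)=e(r,p)/\gcd(e(r,p),m)$), this eliminates $A^{\varepsilon}_{n}(l)$ and $D^{\varepsilon}_{n}(l)$ of large rank, $B_{n}(l)$ and $C_{n}(l)$ for $n\geq3$, all exceptional types $E_{6},E_{7},E_{8},F_4,{}^{2}F_4$ and ${}^{2}G_2$, and in particular ${}^{3}D_4(l)$ (whose order carries $\Phi_{12}$ and $\Phi_3^{2}$) and ${}^{2}D_4(l)$ (whose order carries $\Phi_8$). (ii) \emph{Presence of both a $\Phi_3$- and a $\Phi_6$-prime.} Since $k_3(q)$ and $k_6(q)$ divide $|S|$ with all their prime divisors in $R_3(q)$ and $R_6(q)$, the group $S$ must contain primes of both kinds; this rules out $A_3(q)=L_4(q)$ (no $\Phi_6$ in its order) and ${}^{2}A_3(q)=U_4(q)$ (no $\Phi_3$), and forces the $\Phi_3(q)$- and $\Phi_6(q)$-parts of $|S|$ to equal exactly $k_3(q)$ and $k_6(q)$. (iii) \emph{Multiplicities and size.} Comparing with $|L|_{p'}=\Phi_1(q)^{4}\Phi_2(q)^{4}\Phi_3(q)\Phi_4(q)^{2}\Phi_6(q)/d$ — the simultaneous occurrence of $\Phi_3(q)$ and $\Phi_6(q)$ with multiplicity one and of $\Phi_4(q)$ with multiplicity two, together with the lower bound on $|S|_{p}$ — and using, where helpful, that a Hall $\{r_3(q),r_6(q)\}$-subgroup of $S$ is abelian (Lemma~\ref{HollD4R3R6}), one rules out $A_1(l)$, $A^{\varepsilon}_2(l)$, $B_2(l)$ (and, when $p=2$, the Suzuki group ${}^{2}B_2(l)$), $G_2(l)$, and $D_4(l)$ with $l$ a power of $p$ smaller than $q$. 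The only remaining possibility is $l=q$ and $S$ of type $D_4$, that is, $S\simeq D_4(q)$.

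The main obstacle is obstruction (iii), especially the subfield case $S=D_4(p^{m})$ with $m<n$ and the low-rank near-misses ($A_1$, $A^{\varepsilon}_2$, $B_2$, $G_2$ and Suzuki groups over arbitrary fields). For the subfield case the argument is twofold: on one hand, divisibility of $|S|\cdot|\Out(S)|$ by $B$ forces (up to absolute constants) $p^{16(n-m)}\leq|\Out(D_4(p^{m}))|\leq 24m$, so $n-m$ is bounded; on the other hand, a primitive prime divisor $r$ of $\Phi_{4m}(p)$ divides $\Phi_4(p^{m})$, hence $|D_4(p^{m})|$, and membership $r\in\pi(L)$ forces $e(r,q)=4m/\gcd(4m,n)\leq 6$, which bounds $m/(n-m)$; together these bound $n$, leaving a short explicit list of triples $(p,m,n)$ to be checked directly (for which the required cyclotomic multiplicities or a missing prime give the contradiction). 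The low-rank cases are closed similarly, by combining $|S|\mid|L|$, the divisibility of $|S|\cdot|\Out(S)|$ by $B$ (which forces $l$, hence $|S|_{p}$, to be large), the forbidden-degree constraint (i), and the abelian Hall $R_3(q)\cup R_6(q)$-subgroup condition to dispose of each candidate. With every alternative eliminated, $S\simeq D_4(q)$.
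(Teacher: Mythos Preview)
Your overall strategy is the paper's: reduce to Lie type in characteristic $p$, then eliminate each family using $\pi(S)\subseteq\pi(L)$, $|S|_p\le q^{12}$, $k_3(q)k_6(q)\mid|S|$, and the comparison $|L|_{p'}\le|\Aut(S)|_{p'}$. The paper, however, packages the first and third constraints into a single \emph{exact} invariant. It sets $a=\max\{i:R_i(p)\cap\pi(S)\neq\varnothing\}$; then $\pi(S)\subseteq\pi(L)$ gives $a\le 6n$, while $k_6(q)\mid|S|$ gives $a\ge 6n$, so $a=6n$ on the nose. For each Lie type, $a$ is a known expression in the field exponent $l$ and Lie rank $m$ (e.g.\ $a=2l(m-1)$ for $D_m(p^l)$), so one obtains an \emph{equation} in $l,m,n$ rather than an inequality; combined with $|S|_p\le p^{2a}$ this pins $m$ down and fixes $l$, after which a single $p'$-order comparison $|S|_{p'}\le|L|_{p'}\le|\Aut(S)|_{p'}$ finishes the case. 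In particular, for $D_m(p^l)$ one gets $l(m-1)=3n$ and $m\le4$, hence $m=4$ and $l=n$ immediately, with no subfield analysis or residual finite check.

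Your obstruction (i), as stated, over-claims. The presence of $\Phi_{12}(l)$ in $|{}^3D_4(l)|$ or of $\Phi_8(l)$ in $|{}^2D_4(l)|$ does not by itself force a prime outside $\pi(L)$: for $l=p^{n/2}$ (with $n$ even), a primitive divisor $r$ of $\Phi_{12}(l)$ has $e(r,p)=6n$, hence $e(r,q)=6$, and a primitive divisor of $\Phi_8(l)$ gives $e(r,q)=4$, both allowed. The same issue affects your claim that (i) eliminates $B_m,C_m$ for $m\ge3$ (take $B_3(q)$ itself). These cases are in fact closed only by the size comparison, which is your obstruction (iii); but then your handling of the subfield case $D_4(p^m)$ with $m<n$ is needlessly elaborate. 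Since you already know $k_6(q)\mid|S|$, there is a prime $r$ with $e(r,p)=6n$ dividing $|D_4(p^m)|$, and the cyclotomic factors of $|D_4(p^m)|$ force $6n\mid 6m$, i.e.\ $n\mid m$; together with $|S|_p\le q^{12}$ (so $m\le n$) this gives $m=n$ directly, with no appeal to $\Phi_{4m}(p)$ and no residual finite list to check.
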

\begin{proof}
It follows from Lemmas \ref{factorD4}, \ref{R4}, and \ref{R1R2} that $|\overline{G}|$ is a multiple of $|L|_{p'}$, in particular $|L|_p\geq |S|_p$. Lemma \ref{LieType} implies that $S$ is a group of Lie type over the field of characteristic $p$. From Lemmas \ref{hz2} and \ref{hz4}, it follows that any Sylow $t$-subgroup of $\overline{G}$ is cyclic, where $t\in R_3(q)\cup R_6(q)$. It follows from Lemma \ref{HollD4R3R6} that the Hall $R_3(q)$-subgroup and $R_6(q)$-subgroup of $\overline{G}$ are cyclic. It follows from Lemma \ref{AutD4} that $k_3(q)k_6(q)$ divides $|S|$. Hence $k_i(q)$ divides the order of some maximal torus, where $i\in\{3,6\}$. Let $a$ be a maximal number such that the set $\pi(S)\cap R_{a}(p)$ is not empty. From Lemma \ref{factorD4} and the fact that $\pi(S)\leq \pi(L)$ it follows that $a=6n$. We have $|S|_p\leq|L|_p=p^{2a}$.

Assume that $S\simeq A_m(p^l)$. The maximal number $b$ such that $R_b(p)\subseteq\pi(S)$ is equal to $l(m+1)$. We have $b=a=l(m+1)$, $|S|_p=p^{lm(m+1)/2}=p^{am/2}\leq p^{2a}$. Thus, $m\leq4$ and $l=6n/m$. From the fact that $|S|_{p'}\leq |L|_{p'}\leq|Aut(S)|_{p'}$, it is easy to get a contradiction.

Assume that $S\simeq\ ^2A_m(p^l)$. We have $a=2lm$ if $m$ is even and $a=2l(m+1)$ if $m$ is odd, $|S|_p=p^{lm(m+1)/2}$. Thus, $m\leq7$ and $l=3n/m$ if $m$ is even and $l=3n/(m+1)$ if $m$ is odd. From the fact that $|S|_{p'}\leq|L|_{p'}\leq|Aut(S)|_{p'}$, it is easy to get a contradiction.

Assume that $S\simeq B_m(p^l)$ or $S\simeq C_m(p^l)$. We have $a=2lm$, $|S|_p=p^{lm^2}=p^{am/2}\leq p^{2a}$. Thus, $m\leq4$ and $l=3n/m$. From the fact that $ |S|_{p'}\leq|L|_{p'}\leq|Aut(S)|_{p'}$, it is easy to get a contradiction.

Assume that $S\simeq\ ^2B_2(p^l)$. We have $a=4l$, $|S|_p=p^{2l}$. Hence $l=3n/2$. Thus, $|Aut(S)|_{p'}<q(q-1)(q^3+1)$; a contradiction with the fact that $q^{12}<|L|_{p'}\leq|Aut(S)|_{p'}$.

Similarly show that $S$ can not be isomorphic to$\ ^3D_4(p^l), G_2(p^l),\ ^2G_2(p^l), F_4(p^l),\ ^2F_4(p^l)$.

Assume that $S\simeq D_m(p^l)$, where $m\geq4$. We have $a=2l(m-1)$, $|S|_p=p^{lm(m-1)}=p^{am/2}\leq p^{2a}$. Thus, $m\leq4$. Hence, $l=n$ and $S\simeq L$.

Assume that $S\simeq\ ^2D_m(p^l)$. We have $a=2lm$, $|S|_p=p^{lm(m-1)}=p^{a(m-1)/2} \leq p^{2a}$. Thus, $m\leq5$. From the fact that $|S|_{p'}\leq |L|_{p'}\leq|Aut(S)|_{p'}$, it is easy to get a contradiction.

Assume that $S\simeq\ ^{\varepsilon}E_{\alpha}(p^l)$, where $\alpha \in \{6,7,8\}$, if $\alpha=6$ then $\varepsilon \in \{+,-\}$ otherwise $\alpha$ is the empty symbol. In this case it is easy to show that $|S|_p>|L|_p$; a contradiction.

\end{proof}

\begin{lem}
$G\simeq L$
\end{lem}
\begin{proof}
From Lemma \ref{Liner} it follows that $S\simeq L$. Assume that $K$ is not trivial. From Lemmas \ref {factorD4}, \ref {R4}, and \ref{R1R2} it follows that $K$ is a $p$-group. Take $h\in S$ is an element of order $k_6(q)$. Since $r_6(q)$ is not adjacent to $p$ in the graph $GK(S)$, we obtain $(Ind_S(h))_p=|S|_p=|L|_p$. Let $h'\in G$ be the preimage of the element $h$. If $h'$ acts non trivially on $K$ then $(Ind_G(h'))_p>|L|_p$; a contradiction. Thus $h'$ acts trivially on $K$. Hence any element conjugate to $h'$ acts trivially on $K$. Since $S$ is a simple group, the minimal preimage $H$ of $ S $ is contained in $C_G(K)$. Hence for any $x\in K$ we have $(Ind_G(x))_{R_3(q)\cup R_6(q)}=1$; a contradiction with Lemma \ref {GorBig}. Thus $K$ is trivial.

From Lemma \ref{Order} we obtain $S=G$.
\end{proof}

The proposition $1$ is proved.

\section{Proof the Theorem for groups $D_8(q)$}

\begin{prop}
Let $L=D_8(q)$, where $q=p^n$. If $G$ is a group with trivial centre and $N(G)=N(L)$ then $G\simeq L$.
\end{prop}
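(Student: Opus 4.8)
The plan is to mirror, almost verbatim, the strategy used for $D_4(q)$ in Section~2, since $D_8(q)$ has the same qualitative structure: a disconnected-enough prime-graph behaviour at the primitive prime divisors of large degree, and a torus structure that lets us pin down $S$ by order comparisons. As before we may assume $q>3$ (the small cases being covered by \cite{Ah132} or direct reference). The degrees that will play the role that $3,6$ played for $D_4$ are, for $D_8(q)$, the ``large'' ones governed by Lemma~\ref{Neda}: namely $n{=}8$ gives $2(n-1)=14$, and $n-1=7$, and $2(n-2)=12$. Concretely I would take $\pi=R_{14}(q)$ (primitive prime divisors of $q^{14}-1$) together with $R_7(q)$ and $R_{12}(q)$ as the distinguished prime sets; these are coprime to $2,3,5$ for $q>3$, so Lemmas~\ref{GorBig}, \ref{hz5}, \ref{navarro}, \ref{pat} all apply. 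Using Lemma~\ref{Neda}(i)--(iv) one shows first that $G\in\{r,r'\}^*$ for the appropriate pair of these primitive primes (the analogue of Lemma~\ref{good}): an element with small index at $r_{14}(q)$ is semisimple since $r_{14}(q)$ is non-adjacent to $p$ in $GK(L)$, hence lies in a maximal torus, and the torus orders of $D_8(q)$ (from \cite{Car1}) force the index to be either $1$ or the full $k_{14}(q)$-part. The same gives abelian Hall subgroups at these primes (analogue of Lemma~\ref{HollD4R3R6}) via Lemmas~\ref{pat} and \ref{navarro}.

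Next I would run the reduction to an almost simple quotient exactly as in Lemma~\ref{factorD4}: pass to $\widetilde G = G/O_{\pi'}(G)$, rule out solvable minimal normal subgroups by a Frobenius-group argument (a Hall subgroup at one primitive prime acts fixed-point-freely, so $|X|-1$ is divisible by $k_j(q) \ge$ another $k$, contradicting the size estimates in Lemma~\ref{kqi}), rule out $k>1$ factors in the socle by the non-adjacency of the two primitive primes in $\omega(G)$ together with $|R|=|R||$, and conclude $S \le \overline G \le \operatorname{Aut}(S)$ with $|\overline G|$ divisible by the product of the relevant $k_i(q)$. Then come the ``no outer contribution'' lemmas (analogues of \ref{AutD4}, \ref{Aut2D4}): $|\operatorname{Out}(S)|$ cannot absorb a primitive prime $r_i(q)$ with $i \in \{7,12,14\}$ — for field automorphisms this forces the base field to be $t^{r_i(q)}$, making $|S|_t$ astronomically larger than $|L|_p = q^{56}$ (here one uses $\operatorname{prk}$-type bounds and Lemma~\ref{Lieq3}), for diagonal automorphisms it forces $S$ to be linear/unitary of large rank, again oversized; and the diagonal-automorphism case needs a little care because $d=(q^8-1,4)$ is genuinely present in the index formulas of Lemma~\ref{Neda}, but $d$ is a power of $2$ and hence irrelevant to these odd primitive primes. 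Lemma~\ref{hz5} then upgrades this to: the whole Hall $R_i(q)$-subgroup of $\overline G$ lies in $S$.

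The ``clearing the kernel'' lemmas (analogues of \ref{R4}, \ref{R1R2}) come next: one shows successively that $R_j(q) \cap \pi(K) = \varnothing$ for $j$ ranging over the intermediate degrees ($j \in \{1,2,4,6,7,8,\dots,14\}$, i.e.\ all the degrees dividing into $|D_8(q)|$), by the same mechanism — a minimal normal $r$-subgroup $R$ of a suitable quotient is acted on by a Hall subgroup at one of the distinguished primes; if the action has a nontrivial fixed point one propagates triviality of the action up to the minimal preimage $H$ of $S$ (using Lemma~\ref{Neda} to control which indices can be small), forcing $k_i(q) \mid |\overline G / S|$ against the Out-lemma; if the action is fixed-point-free then $|R| \le (k_j(q))^2$ (the index is bounded by $|L|_{r}$, which by the torus structure is at most the square of the relevant cyclotomic factor) while faithfulness gives $|R|-1 \ge |H| \gg (k_j(q))^2$. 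One also needs the $S \simeq A_1(t^m)$ exceptional case of Lemma~\ref{hz2}/\ref{hz4} excluded at the outset, which is immediate from the size $2mt^m(t^m-1)^2$ being far too small to be divisible by the required product of $k_i(q)$'s. After this $\pi(K) \subseteq \{p\}$, and the Alt/sporadic cases (analogues of \ref{D4Alt}, \ref{Sporadic}) die by Hall-subgroup-abelianness (\cite{Hall},\cite{Tom66}) and order comparison against $|L|$ and \cite{Atlas}. Then the characteristic-$p$ lemma (analogue of \ref{LieType}) follows from Lemmas~\ref{pchast}, \ref{Regular}, \ref{Lieq3Aut} as before.

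The final identification (analogue of Lemma~\ref{Liner}) is the one place where $D_8$ differs nontrivially from $D_4$, and I expect it to be the \emph{main obstacle}. We know $S = S(p^l)$ is of Lie type in characteristic $p$, $|S|_p \le |L|_p = p^{56}$, $|S|_{p'} \le |L|_{p'} \le |\operatorname{Aut}(S)|_{p'}$, and the largest degree $a$ with $R_a(p) \cap \pi(S) \ne \varnothing$ equals $14n$ (coming from $R_{14}(q)$). Running case-by-case over the Lie types: for $S \simeq D_m(p^l)$ one gets $a = 2l(m-1) = 14n$ and $|S|_p = p^{lm(m-1)} = p^{am/2} \le p^{28}\cdot\!$something, forcing $m \le 8$; then $l = 14n/(2(m-1)) = 7n/(m-1)$ must be an integer and the $p'$-order constraint $|S|_{p'} \le |\operatorname{Aut}(S)|_{p'}$ kills $m < 8$, leaving $m=8$, $l=n$, $S \simeq L$. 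The danger cases are $^2D_9$ (where $a = 18l$ could equal $14n$ and $|S|_p = p^{72l}$ — but $72l$ vs.\ $p^{56}$ when $18l = 14n$ forces $l$ small and an $|\operatorname{Aut}|_{p'}$ contradiction), linear/unitary groups of rank up to $\sim 16$ (where $|S|_p = p^{am/2}$ again bounds the rank and then the $p'$-part loses), and $B_m$, $C_m$ of rank up to $8$ (same $p^{am/2}$ bound, $m \le 8$, then $|\operatorname{Aut}(S)|_{p'}$ vs.\ $|L|_{p'}$ fails because $D_8$ and $B_8/C_8$ have the same $p$-part but different $p'$-parts at, e.g., $R_2$). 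Exceptional groups $E_6, E_7, E_8, F_4, G_2$ and twisted versions are eliminated by $|S|_p > |L|_p$ or by a prime in $\pi(S)\setminus\pi(L)$. So the structure of the argument is identical to Lemma~\ref{Liner}; only the arithmetic is heavier because rank $8$ admits more competing types and the exponent bookkeeping ($56$, $72$, $am/2$, etc.) is finicker. Once $S \simeq L$ and $\pi(K) \subseteq \{p\}$, the last lemma is verbatim the $D_4$ one: an element of order $k_{14}(q)$ in $S$ has $p$-part of index equal to $|L|_p$, so its preimage centralizes $K$, hence the minimal preimage $H \simeq S$ centralizes $K$, giving $(Ind_G(x))_\pi = 1$ for $x \in K$ against Lemma~\ref{GorBig} unless $K = 1$; then $G = S = L$ by Lemma~\ref{Order}.
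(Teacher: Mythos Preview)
Your overall architecture mirrors the paper's and is essentially sound: the pair $\{r_7(q),r_{14}(q)\}$ plays the role that $\{r_3(q),r_6(q)\}$ played for $D_4$, and the reduction to an almost-simple quotient, the exclusion of outer contributions, and the final identification all go through along the lines you sketch. (The paper does not use $R_{12}(q)$ at all; only $R_7$ and $R_{14}$ appear in the $\{p,q\}^*$ condition.)

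There is, however, a genuine gap in your kernel-clearing step. You assert that for every intermediate degree $j$ one gets $|R|\le (k_j(q))^2$ and hence $\pi(K)\subseteq\{p\}$. This is copied verbatim from the $D_4$ arithmetic and is false for $D_8(q)$: here $k_4(q)^4$ divides $|L|$ (four factors $q^4{-}1,\,q^8{-}1,\,q^8{-}1,\,q^{12}{-}1$), and for small $j$ the multiplicities are even higher. More importantly, the ``fixed-point branch'' of your argument (modelled on Lemma~\ref{R4}) needs the implication ``$\alpha_{R_i(q)}=1\Rightarrow \alpha_{R_j(q)}=|L|_{R_j(q)}$'' from Lemma~\ref{Neda}. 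For $j=8$ this holds (the centralizer types in Lemma~\ref{Neda}(i) have trivial $R_8$-part), but for $j=4$ it fails: e.g.\ $|GU_7(q)|_{R_4(q)}=k_4(q)$, so the corresponding index has $R_4$-part $(k_4(q))^3\neq|L|_{R_4(q)}$. The same obstruction arises for $r=2$. Consequently you cannot push through to $\pi(K)\subseteq\{p\}$ by this method.

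The paper handles this differently: after clearing $R_8(q)$ via the fixed-point argument (Lemma~\ref{R8}), it switches to a second mechanism (Lemma~\ref{MnogoR}) using the \emph{cyclic} Hall $R_i(q)$-subgroup acting regularly on $[X,H]$ and bounding $(Ind(h))_{r_j}$ directly from Lemma~\ref{Neda}; this disposes of $j\in\{1,2,3,5,6,10,12\}$ but \emph{not} $j=4$ or $r_j=2$. The paper thus only obtains $\pi(K)\subseteq\{p,2\}\cup R_4(q)$, identifies $S\simeq D_8(q)$ with this weaker information, and then kills the remaining primes of $K$ in the final lemma after $S\simeq L$ is known. Your ``verbatim the $D_4$ one'' final step therefore also understates the work: one must eliminate $2$ and $R_4(q)$ from $\pi(K)$ there, not just $p$.
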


\begin{lem}\label{good2}
$G\in\{r_7(q), r_{14}(q)\}^*$
\end{lem}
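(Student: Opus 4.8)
The plan is to mimic exactly the argument used for $D_4(q)$ in Lemma \ref{good}, replacing the pair of primes $\{r_3(q),r_6(q)\}$ by $\{r_7(q),r_{14}(q)\}$. The key structural fact we need is that for $L=D_8(q)$ the primitive prime divisors $r_7(q)$ and $r_{14}(q)$ are non-adjacent to the characteristic $p$ in the prime graph $GK(L)$; this comes from \cite{VV}. Indeed, $\varphi(r_7(q),L)=\eta(e(r_7(q),q))=7$ and $\varphi(r_{14}(q),L)=\eta(14)=7$, and since $\prk(L)=8$ we have $7\leq 8$, so the relevant tori are torsion tori disjoint from the unipotent radical.

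First I would take an arbitrary $h\in L$ with $(Ind_L(h))_{r_i(q)}<|L|_{r_i(q)}$ for $i\in\{7,14\}$. Since $r_i(q)$ is non-adjacent to $p$ in $GK(L)$, the element $h$ must be semisimple, hence lies in some maximal torus $T$ of $L=D_8(q)$. Next I would invoke the classification of the orders of maximal tori of $D_8(q)$ from \cite{Car1}: the only cyclotomic factors of the torus orders that can be divisible by $r_7(q)$ are the factors $k_7(q)=(q^7-1)/((q-1)(7,q-1))$, and similarly for $r_{14}(q)$ only $k_{14}(q)=(q^7+1)/((q+1)(7,q+1))$ occurs; moreover no single torus of $D_8(q)$ has order divisible by both $r_7(q)$ and $r_{14}(q)$ simultaneously (a torus containing a $\Phi_7$-part and a $\Phi_{14}$-part would need rank $\geq 14>8$). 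Therefore if $(Ind_L(h))_{r_7(q)}<|L|_{r_7(q)}$ then in fact $r_7(q)\nmid |C_L(h)|$ forces... more precisely the torus containing $h$ has its full $R_7(q)$-part removed while retaining the full $R_{14}(q)$-part, giving $(Ind_L(h))_{R_7(q)}=1$ but $(Ind_L(h))_{R_{14}(q)}=|L|_{R_{14}(q)}$, and symmetrically with $7$ and $14$ interchanged. This yields $\alpha_{\{r_7(q),r_{14}(q)\}}\in\{|L||_{r_7(q)},|L||_{r_{14}(q)},|L||_{\{r_7(q),r_{14}(q)\}}\}$ for every $\alpha\in N(L)$, i.e. $L\in\{r_7(q),r_{14}(q)\}^*$. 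Since $N(G)=N(L)$ we conclude $G\in\{r_7(q),r_{14}(q)\}^*$.

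The step I expect to be the main obstacle is verifying cleanly, from the torus-order data in \cite{Car1}, that the $R_7(q)$- and $R_{14}(q)$-parts of maximal torus orders in $D_8(q)$ are genuinely ``all or nothing'' and never appear together; for $D_n$ the maximal tori correspond to signed cycle types of the Weyl group and one must check that a $\Phi_7$ or $\Phi_{14}$ contribution comes from a single cycle of length $7$ (with appropriate sign), so that the whole $k_7(q)$ or $k_{14}(q)$ divides the torus order or none of it does — and that accommodating both would require total length at least $14$, exceeding $8$. Once this combinatorial bookkeeping on the Weyl group $W(D_8)$ is in place, the rest is a verbatim repetition of the $D_4$ argument.
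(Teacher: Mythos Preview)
Your approach is correct in spirit and is precisely the transplant of the paper's own $D_4$ argument (Lemma~\ref{good}) to $D_8$. However, the paper does \emph{not} redo that argument here: its entire proof of Lemma~\ref{good2} is the single sentence ``The assertion of the lemma follows from Lemma~\ref{Neda}.'' That lemma (Ahanjideh's Corollary~13) already tabulates, for $L=D_n(q)$ and $r\in R_{2(n-1)}(q)$, $r_2\in R_{n-1}(q)$, the exact possible values of $Ind_L(x)$ whenever the relevant primitive-prime part is not full; for $n=8$ one reads off directly that every such index has full $R_7(q)$-part or full $R_{14}(q)$-part, which is exactly the $\{r_7(q),r_{14}(q)\}^*$ condition. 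So the paper's route is shorter because the centralizer bookkeeping you plan to do by hand has already been packaged.

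One point in your write-up deserves tightening. The torus argument cleanly gives the ``all or nothing'' statement $(Ind_L(h))_{R_i(q)}\in\{1,|L|_{R_i(q)}\}$: if $r_i(q)\mid |C_L(h)|$ then $h$ sits with an $r_i(q)$-element inside a single maximal torus, whose $R_i(q)$-part is forced to be the whole $k_i(q)$. But to rule out $(Ind_L(h))_{R_7(q)}=(Ind_L(h))_{R_{14}(q)}=1$ you need more than the fact that no single torus carries both $\Phi_7$ and $\Phi_{14}$: you need that no \emph{centralizer} $C_L(h)$ of a nontrivial semisimple element has order divisible by both $r_7(q)$ and $r_{14}(q)$ (since $h$ can lie in several non-conjugate tori when $C_L(h)$ is large). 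This is still obtainable from Carter \cite{Car1} --- the centralizer types in $D_8$ are subsystem subgroups of rank $8$, and none of their orders contain both a $(q^7-1)$- and a $(q^7+1)$-factor --- but it is a statement about centralizers, not merely about maximal tori. Your phrase ``$r_7(q)\nmid |C_L(h)|$ forces\ldots'' also has the inequality reversed; you mean $r_7(q)\mid |C_L(h)|$. Once these are straightened out, your argument goes through and is equivalent in content to invoking Lemma~\ref{Neda}.
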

\begin{proof}
The assertion of the lemma follows from Lemma \ref{Neda}.
\end{proof}
\begin{lem}\label{HollD8R3R6}
A Hall $R_7(q)$-subgroup and $R_{14}(q)$-subgroup of $G$ exists and is abelian.
\end{lem}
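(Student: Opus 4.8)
The plan is to transcribe the proof of Lemma~\ref{HollD4R3R6} from $D_4(q)$ to $L=D_8(q)$, with the pair of indices $\{7,14\}$ playing the role of $\{3,6\}$. The two cases are perfectly parallel: for $n=8$ one has $R_{14}(q)=R_{2(n-1)}(q)$ and, since $n$ is even, $R_7(q)=R_{n-1}(q)$, so $R_7(q)$ and $R_{14}(q)$ occupy exactly the slots filled by $R_3(q)$ and $R_6(q)$ in the rank-$4$ case. First I would record that $2,3,5\notin R_7(q)\cup R_{14}(q)$: every $r\in R_7(q)$ satisfies $7\mid r-1$ and every $r\in R_{14}(q)$ satisfies $14\mid r-1$, so $r\geq29$ in both cases. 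This places all primes involved in the range $r>5$ required by Lemmas~\ref{GorBig} and~\ref{navarro}.

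Next, fix any $r\in R_7(q)\cup R_{14}(q)$ together with a prime $s$ of the complementary set; both exceed $5$ and, by Lemma~\ref{good2}, $G\in\{r,s\}^*$. Then the $\{r,s\}$-part of every class size lies in $\{|G||_r,|G||_s,|G||_{\{r,s\}}\}$, which forces $(Ind_G(h))_r\in\{1,|G||_r\}$ for every $r$-element $h$, while Lemma~\ref{GorBig} gives $|G|_{\{r,s\}}=|G||_{\{r,s\}}$, hence $|G|_r=|G||_r$. For a nontrivial such $h$ we have $h\in C_G(h)$, so $|C_G(h)|_r>1$ and therefore $(Ind_G(h))_r<|G|_r=|G||_r$; the value $|G||_r$ is excluded, and $(Ind_G(h))_r=1$. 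As $r>5$, Lemma~\ref{navarro} then shows that a Sylow $r$-subgroup of $G$ is abelian.

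It remains to pass from a single prime $r$ to the whole set $R_i(q)$, where $i\in\{7,14\}$ is chosen so that $r\in R_i(q)$, and to assemble the Hall subgroup. Since $N(G)=N(L)$ and $h\neq1$, the number $Ind_G(h)$ is a genuine class size of $L=D_8(q)$, so I would invoke Lemma~\ref{Neda}: part~(i) governs $r\in R_{14}(q)=R_{2(n-1)}(q)$ and part~(iii) governs $r\in R_7(q)=R_{n-1}(q)$, with $n=8$ even. Because $(Ind_G(h))_r=1\neq|L|_r$, the matching index must be one of the exceptional values tabulated there; reading off the $R_i(q)$-contribution of the factors $GU$, $GL$ and $GO^{\pm}_{2(n-1)}$ in their denominators shows that every such value is coprime to \emph{all} of $R_i(q)$, so $(Ind_G(h))_{R_i(q)}=1$. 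Consequently, for any two distinct primes of $R_i(q)$ the class sizes of their elements are not divisible by the other prime, so Lemma~\ref{pat} makes the corresponding Sylow subgroups permute; their product is a Hall $R_i(q)$-subgroup, abelian because the commuting factors are each abelian. Carrying this out separately for $i=7$ and $i=14$ gives the lemma.

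The step I expect to carry the real weight is this last upgrade from an abelian Sylow $r$-subgroup to an abelian Hall $R_i(q)$-subgroup, i.e.\ the verification that in every exceptional case of Lemma~\ref{Neda}(i),(iii) the index is coprime to all of $R_i(q)$ rather than merely to the single prime $r$ already handled. Concretely this is the check that the cyclotomic factor $k_{14}(q)$, respectively $k_7(q)$, is absorbed entirely by the $GU_{n-1}$, $GL_{n-1}$ or $GO^{\pm}_{2(n-1)}$ denominators occurring for $n=8$. A minor preliminary point is that $R_7(q)$ and $R_{14}(q)$ are both nonempty for $q>3$, so that the pairing used with Lemma~\ref{GorBig} is always available; this is guaranteed by the Zsigmondy lemma \cite{zs}.
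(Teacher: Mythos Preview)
Your proposal is correct and follows essentially the same route as the paper's own proof: observe $2,3,5\notin R_7(q)\cup R_{14}(q)$, use Lemma~\ref{good2} together with Lemma~\ref{GorBig} (and implicitly Lemma~\ref{navarro}) to get abelian Sylow $r$-subgroups for every $r\in R_7(q)\cup R_{14}(q)$, then Lemma~\ref{Neda} to upgrade $(Ind_G(h))_r=1$ to $(Ind_G(h))_{R_i(q)}=1$, and finally Lemma~\ref{pat} to assemble the abelian Hall $R_i(q)$-subgroup. The paper's argument is terser but identical in structure; your expanded justification of each step (in particular the explicit use of Lemma~\ref{navarro} and the verification that every prime in $R_7(q)\cup R_{14}(q)$ exceeds~$5$) simply fills in details the paper leaves implicit.
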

\begin{proof}
Note that $2,3,5\not\in R_7(q)\cup R_{14}(q)$. It follows from Lemmas \ref{good2} and \ref{GorBig} that for any $r\in R_7(q)\cup R_{14}(q)$ a Sylow $r$-subgroup of $G$ is abelian. Hence for any $r$-element $h$ we have $(Ind_G(h))_r=1$. It follows from Lemma \ref{Neda} that $(Ind_G (h))_{R_i(q)}=1$, where $i$ such that $r\in R_i(q)$. It follows from Lemma \ref{pat} that the Hall $R_i(q)$-subgroup exists and is abelian.
\end{proof}

\begin{lem}\label{factorD8}
The group $G$ includes a normal subgroup $K$ such that $S\leq \overline{G}=G/K\leq Aut(S)$, where $S$ is non abelian simple group, and $|\overline{G}|$ is a multiple of $k_7(q)k_{14}(q)$.
\end{lem}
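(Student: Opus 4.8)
The plan is to mimic almost verbatim the argument for Lemma~\ref{factorD4}, replacing the pair $(k_3(q),k_6(q))$ by $(k_7(q),k_{14}(q))$ and replacing Lemma~\ref{kqi}(1),(3) by Lemma~\ref{kqi}(4),(5). First I would form $\widetilde G = G/O_{(R_7(q)\cup R_{14}(q))'}(G)$ and argue that $\widetilde G$ has no nontrivial solvable minimal normal subgroup $X$: since $\omega(\widetilde G)\subseteq\omega(G)=\omega(L)$ and $r_7(q),r_{14}(q)$ are nonadjacent in $GK(L)$ (by \cite{VV}), one has $\pi(X)\subseteq R_i(q)$ for a single $i\in\{7,14\}$; then taking a Hall $R_j(q)$-subgroup $H$ of $\widetilde G$ with $\{i,j\}=\{7,14\}$, the group $XH$ is Frobenius with kernel $X$, so $|X|\mid k_i(q)$ and $|H|\mid |X|-1$, whence $k_j(q)\le|H|\le|X|-1<k_i(q)$, contradicting that $k_7(q)$ and $k_{14}(q)$ are coprime and incomparable (Lemma~\ref{kqi}).

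Next I would let $R=S_1\times\cdots\times S_k$ be the socle of $\widetilde G$ and rule out $k>1$. By Lemma~\ref{Neda} (equivalently Lemma~\ref{good2}), $\widetilde G\in\{r_7(q),r_{14}(q)\}^*$, so $R$ contains no element of order $tr$ with $t\in R_7(q)$, $r\in R_{14}(q)$; hence $\pi(R)\cap(R_7(q)\cup R_{14}(q))\subseteq R_i(q)$ for a single $i$. Each $S_h$ has order divisible by some prime in $R_7(q)\cup R_{14}(q)$, so picking $g\in S_1$ with $(Ind_{S_1}(g))_{R_7(q)\cup R_{14}(q)}=|S_1|_{R_7(q)\cup R_{14}(q)}$ and using $|R|=|R||$ (proved exactly as in Lemma~\ref{Order}) together with Lemma~\ref{factorKh} gives $1<(Ind_R(g))_{R_7(q)\cup R_{14}(q)}<|R|_{R_7(q)\cup R_{14}(q)}=|R||_{R_7(q)\cup R_{14}(q)}$, a contradiction. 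Therefore $k=1$, so $R=S\le\widetilde G\le\Aut(S)$ with $S$ simple nonabelian, and since $|G|_{R_7(q)\cup R_{14}(q)}=|\widetilde G|_{R_7(q)\cup R_{14}(q)}\ge|L|_{R_7(q)\cup R_{14}(q)}=k_7(q)k_{14}(q)$, the order $|\overline G|$ is divisible by $k_7(q)k_{14}(q)$; set $K=O_{(R_7(q)\cup R_{14}(q))'}(G)$.

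The main obstacle is the same as in the $D_4$ case: verifying $|R|=|R||$ for the possibly-non-simple socle $R$ in the $k>1$ step, which requires the argument of Lemma~\ref{Order} applied componentwise (using Lemmas~\ref{Regular} and \ref{vse}/\cite{VV}) to show each $|R|_s = |R||_s$. Everything else is a direct transcription of Lemma~\ref{factorD4}; I do not expect any genuinely new difficulty, only the bookkeeping of replacing the degree-$3$/degree-$6$ cyclotomic data with the degree-$7$/degree-$14$ data and checking that $k_7(q)$ and $k_{14}(q)$ remain coprime, incomparable, and free of the small primes $2,3,5$ for $q>3$.
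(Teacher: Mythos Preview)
Your proposal is correct and follows essentially the same route as the paper's proof, which is indeed a direct transcription of the argument for Lemma~\ref{factorD4} with $(R_3,R_6,k_3,k_6)$ replaced by $(R_7,R_{14},k_7,k_{14})$. One small slip: you invoke $\omega(G)=\omega(L)$, but this equality is not a consequence of $N(G)=N(L)$; the paper (implicitly) gets the nonexistence of elements of order $tr$ with $t\in R_7(q)$, $r\in R_{14}(q)$ from Lemma~\ref{GorBig} applied via Lemma~\ref{good2}, together with $\omega(\widetilde G)\subseteq\omega(G)$, and you should do the same.
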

\begin{proof}
Let $\widetilde{G}=G/O_{(R_7(q)\cup R_{14}(q))'} $.
Suppose that $\widetilde{G}$ include non-trivial solvable minimal normal subgroup $X$. We have $\pi(X)\subseteq R_i(q)$, where $i\in \{7, 14\}$. Take a Hall $R_j(q)$-subgroup $H$ of $\widetilde{G}$, where $j\in \{7, 14\}\setminus\{i\}$. Therefor $XH$ is a Frobenius group with kernel $X$. Hence $N(\widetilde{G})$ contains a number is multiple of $|X|$. It follows that $|X|$ divides $k_i(q)$. Since $X$ is a Frobenius kernel of $XH$, we obtain $|X|-1$ is multiple of $|H|$. It follows from $|H|=|G|_{R_j(q)}\geq |L|_{R_j(q)}=k_j(q)$ that $k_j(q)$ divides $k_i(q)$; a contradiction with Lemma \ref{kqi}.

Let $R$ be the socle of $\widetilde {G}$. We get that $R=S_1\times...\times S_k$, where $S_1,...,S_k$ are non abelian simple groups.
Assume that $k> 1$. It follows from the definition that the order of $S_h$ is multiple of some number of $R_7(q)\cup R_{14}(q)$, for every $1\leq h\leq k$. Note that $\omega(\widetilde{G})\subseteq \omega(G)$. Hence $R$ contains no element of order $tr$, where $t\in R_7(q)$ and $r\in R_{14}(q)$. Thus, $\pi(R)\cap(R_7(q)\cup R_{14}(q))\subseteq R_i(q)$, where $i\in\{7, 14\}$. The $S_1$ contains an element $g$ such that $(Ind_{R}(g))_{R_7 (q)\cup R_{14}(q)} =|S_1|_{R_{7}(q)\cup R_{14}(q )}$. We have $1<(Ind_{S_1}(g))_{R_7 (q)\cup R_{14}(q)}<|R|_{R_7(q)\cup R_{14}(q)}$. Similarly, as in Lemma \ref{Order} we can shows that $|R|=|R||$. In particular $|R|_{R_7(q)\cup R_{14}(q)}>(Ind_R(g))_{R_7(q)\cup R_{14}(q)}$; using Lemma \ref{factorKh} we get a contradiction.

Thus $k=1$ and $R\leq \widetilde{G}\leq Aut (R)$, where $R$ is a simple group. Since $|G|_{R_7(q)\cup R_{14}(q)}=|\widetilde{G}|_{R_7(q)\cup R_{14}(q )}$ we get $k_7(q)k_{14}(q)$ divides $|\widetilde{G}$.
\end{proof}

Let $S$, $\overline{G}$ and $K$ be similar as in Lemma \ref{factorD8}.

\begin{lem}\label{AutD8}
$|Out(S)|$ is not a multiple of $k_7(q)$ and $k_{14}(q)$.
\end{lem}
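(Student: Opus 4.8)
The plan is to reproduce the proof of Lemma~\ref{AutD4} almost verbatim, replacing $k_3(q),k_6(q)$ and the index set $\{3,6\}$ throughout by $k_7(q),k_{14}(q)$ and $\{7,14\}$, and the ambient bound $|L|_p=q^{12}$ by $|L|_p=q^{56}$ (since $|D_8(q)|_p=q^{8\cdot7}=q^{56}$). Suppose, for a contradiction, that $k_i(q)$ divides $|\Out(S)|$ for some $i\in\{7,14\}$. Every prime of $R_i(q)$ is congruent to $1$ modulo $i$, hence is at least $29$; consequently $k_i(q)$ is coprime to $6$ and exceeds $4$, and therefore cannot divide $|\Out(A)|$ for $A$ alternating (where $|\Out(A)|\le4$) or sporadic (where $|\Out(A)|\le2$). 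By the classification, $S$ is thus a simple group of Lie type over a field $\mathbb{F}_l$ with $l=t^r$ of characteristic $t$. Exactly as in Lemma~\ref{AutD4}, Lemmas~\ref{Order} and~\ref{factorKh} yield that $|S|$ divides $|G||$; and since $N(G)=N(L)$ we have $|G||=|L||$, while $|L||=|L|$ by Lemma~\ref{Order} applied to the simple group $L$. Hence $|S|$ divides $|L|$, so $|S|_t\le|L|_p=q^{56}$ and $|S|_k\le|L|_k$ for every prime $k$.

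Write $|\Out(S)|=d\,f\,g$, the diagonal, field and graph contributions, where the field part satisfies $f\le\log_t l\le\log_2 l$ and $g\mid6$. Since $\gcd(k_i(q),6)=1$ we get $k_i(q)\mid d\,f$. Suppose first that $S$ is not linear or unitary, i.e.\ not of type $A_m^{\pm}$. Then $d\mid12$, so $\gcd(k_i(q),d)=1$ and $k_i(q)\mid f\le\log_2 l$, whence $l\ge2^{k_i(q)}$. As $|S|_t\ge l$ for every family, this gives $|S|_t\ge2^{k_i(q)}$, which combined with $|S|_t\le q^{56}$ forces $k_i(q)\le56\log_2 q$; but $k_7(q)\ge(q^7-1)/(7(q-1))$ and $k_{14}(q)\ge(q^7+1)/(7(q+1))$ both exceed $56\log_2 q$ for $q\ge3$ (a direct check for $q\in\{3,4\}$ and an easy estimate for $q\ge5$), a contradiction. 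If $q=2$ one either refines the estimate exactly as in Lemma~\ref{AutD4}, splitting on the precise value of $|S|_t$ and exhibiting a prime of $\pi(S)\setminus\pi(L)$, or appeals directly to \cite{Darafsheh}.

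It remains to treat $S\simeq A_m^{\varepsilon}(l)$ with $\varepsilon\in\{+,-\}$, where $d=(m+1,l-\varepsilon1)\le m+1$. Here $|S|_t=l^{m(m+1)/2}\le q^{56}$, so $\tfrac12 m(m+1)\log_2 l\le56\log_2 q$ and thus $\log_2 l\le112\log_2 q/(m(m+1))$; feeding this into $k_i(q)\le d\,f\le(m+1)\log_2 l$ gives $k_i(q)\le112\log_2 q/m\le112\log_2 q$, again incompatible with the lower bounds on $k_i(q)$ above for $q\ge3$, with $q=2$ handled as before. This exhausts all cases, so neither $k_7(q)$ nor $k_{14}(q)$ divides $|\Out(S)|$.

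The main obstacle, just as for $D_4(q)$, is not the arithmetic but the bookkeeping on $|\Out(S)|$: one must use the exact description of the outer automorphism groups of the simple groups of Lie type to be sure that a large primitive prime divisor forced into $|\Out(S)|$ must land in the field-automorphism part whenever $S$ is not of type $A_m^{\pm}$, and that for $S$ of type $A_m^{\pm}$ the diagonal part $(m+1,l\mp1)$ is too small, relative to $|S|_t\le q^{56}$, to absorb it. The numerical slack here is in fact greater than in the $D_4$ case, because $k_7(q)$ and $k_{14}(q)$ are of size roughly $q^6$ whereas $|L|_p=q^{56}$ need only be compared against field-automorphism orders of logarithmic size; the one genuinely small case is $q=2$, i.e.\ $D_8(2)$, which is already covered by \cite{Darafsheh} since $8=7+1$ with $7$ prime.
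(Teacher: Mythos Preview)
Your argument is correct and follows the same strategy as the paper's own proof: rule out alternating and sporadic $S$ by the size of primes in $R_7(q)\cup R_{14}(q)$, observe that $|S|$ divides $|L|$ via Lemmas~\ref{Order} and~\ref{factorKh}, and then force $k_i(q)$ into the field-automorphism part of $\Out(S)$, yielding $l\ge t^{k_i(q)}$ and hence $|S|_t>q^{56}=|L|_p$. The paper's version is considerably terser than yours: since every prime in $R_7(q)\cup R_{14}(q)$ exceeds $13$, the paper simply asserts that $k_i(q)$ divides the order of the field-automorphism group (the diagonal and graph contributions being bounded by small numbers), and then for $q>3$ the single inequality $t^{k_i(q)}>q^{56}$ finishes; no separate treatment of $A_m^{\varepsilon}(l)$ or of the logarithmic estimates is written out, and the cases $q\in\{2,3\}$ are tacitly covered by \cite{Darafsheh} (since $8=7+1$ with $7$ prime). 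Your explicit handling of the diagonal part for $A_m^{\varepsilon}(l)$ and of $q=2$ is a harmless elaboration of what the paper leaves implicit.
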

\begin{proof}
Assume that there exists $i\in\{7, 14\}$ such that $k_i(q)$ divides $|Out(S)|$.
Since $r_i(q)>13$, we get $S$ can not be isomorphic to a sporadic group or an alternating group. Hence $S$ is a group of Lie type over a field of order $l=t^r$ of characteristic $t$. From Lemmas \ref{Order} and \ref{factorKh} it follows that $|S|$ divides $|G||$. Since $|G||=|L||$, we obtain $|S|$ divides $|L|$. In particular, for any $k\in \pi (S) $, we have $|S|_k\leq|L|_k$.

From $2,3\not\in R_i (q) $, it follows that $k_i(q)$ divides order of the field automorphisms group of $S$. Hence $l\geq t^{k_i (q)}$. It follows from Lemma \ref{kqi} that $k_i(q)\in\{(q^7-1)/((7, q-1)(q-1)), (q^7+1)/((7, q+1)(q+1)\}$. Note that for any $k\in\pi(L)$, we have $|L|_k<|L|_p$ and $|L|_p=q^{56}$.
Since $q>3$, we have $|S|_t\geq t^{k_i (q)}>q^{56}=|L|_p$; a contradiction.
\end{proof}

\begin{lem}\label{Aut2D8}
The number $|\overline{G}|/|S|$ is not a multiple of $r_i(q)$, where $i\in\{7,14\}$.
\end{lem}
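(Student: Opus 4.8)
The statement to prove is Lemma~\ref{Aut2D8}: \emph{The number $|\overline{G}|/|S|$ is not a multiple of $r_i(q)$, where $i\in\{7,14\}$.}

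This is the exact analogue of Lemma~\ref{Aut2D4} from the $D_4(q)$ case, so the plan is to copy that argument with the obvious substitutions $(r_3,r_6)\rightsquigarrow(r_7,r_{14})$, $(k_3,k_6)\rightsquigarrow(k_7,k_{14})$, and to invoke Lemma~\ref{AutD8} in place of Lemma~\ref{AutD4}. Concretely, I would argue as follows. Assume for contradiction that $r_i(q)\in\pi(|\overline{G}|/|S|)$ for some $i\in\{7,14\}$, and take a Hall $R_i(q)$-subgroup $H$ of $\overline{G}$. Since $N(G)=N(L)$ and $G\in\{r_7(q),r_{14}(q)\}^*$ by Lemma~\ref{good2}, the quotient $\overline{G}$ also satisfies this condition (class sizes in $\overline G$ divide class sizes in $G$ by Lemma~\ref{factorKh}, and the values $|L||_{r_i(q)}=k_i(q)$ are preserved), so for every $h\in\overline{G}$ we have $(Ind_{\overline{G}}(h))_{R_i(q)}\in\{1,k_i(q)\}$, i.e.\ $|\alpha|_{\pi}\in\{1,|H|\}$ for $\pi=R_i(q)$ once we know $|H|=k_i(q)$; this last equality follows because $|G|_{R_i(q)}=|L|_{R_i(q)}=k_i(q)$ (Lemma~\ref{HollD8R3R6} already gives the Hall subgroup, and $2,3,5\notin R_i(q)$).

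Now apply Lemma~\ref{hz5} with $S\leq A=\overline{G}\leq Aut(S)$ and $\pi=R_i(q)$ (note $\{2,3,5\}\cap R_i(q)=\varnothing$ since $r_i(q)>13$): it yields that $|H|$ divides $|S|$ or $|H|$ divides $|\overline{G}|/|S|$. By our assumption $r_i(q)$ divides $|\overline{G}|/|S|$, and since the Hall $R_i(q)$-subgroup is a single $\pi$-group with $|H|=k_i(q)$, the first alternative is impossible (it would force $r_i(q)\mid|S|$, contradicting $r_i(q)\mid|\overline G|/|S|$ together with $|H|=k_i(q)=|G|_{R_i(q)}=|\overline G|_{R_i(q)}$ meaning all of $k_i(q)$ sits in one factor). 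Hence $|H|=k_i(q)$ divides $|\overline{G}|/|S|$, i.e.\ $k_i(q)$ divides $|Out(S)|$ (recall $|\overline{G}|/|S|$ divides $|Out(S)|$), which contradicts Lemma~\ref{AutD8}.

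The only point requiring a little care — and thus the main (minor) obstacle — is verifying that the hypothesis $|\alpha|_\pi\in\{1,|H|\}$ of Lemma~\ref{hz5} genuinely holds for $\overline{G}$, i.e.\ that passing to the quotient $\overline G=G/K$ does not introduce class sizes whose $R_i(q)$-part is a proper nontrivial divisor of $k_i(q)$. This follows from Lemma~\ref{HollD8R3R6} (abelian Hall $R_i(q)$-subgroup, so by Lemma~\ref{navarro} and Lemma~\ref{pat}-type reasoning the $R_i(q)$-part of any class size is controlled) combined with Lemma~\ref{Neda}, exactly as in the $D_4(q)$ case: Lemma~\ref{Neda}(ii)--(iii) applied to $L=D_8(q)$ (with $n=8$ even, so parts (iii) and (iv) are the relevant ones, governing $r_{14}(q)=r_{2(n-1)}$ and $r_7(q)$) shows that for every $\alpha\in N(L)$ the $R_i(q)$-part is either $|L|_{R_i(q)}=k_i(q)$ or $1$, and this transfers to $N(G)=N(L)$ and then to $N(\overline G)$. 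With that in hand the rest is the verbatim transcription of the proof of Lemma~\ref{Aut2D4}.
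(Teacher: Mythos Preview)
Your proposal is correct and follows exactly the same approach as the paper's proof of Lemma~\ref{Aut2D8}: assume some $r_i(q)$ divides $|\overline{G}|/|S|$, verify the hypothesis $(Ind_{\overline{G}}(h))_{R_i(q)}\in\{1,k_i(q)\}$, apply Lemma~\ref{hz5} to force $|H|=k_i(q)$ into either $|S|$ or $|\overline{G}|/|S|$, rule out the first alternative using $|\overline{G}|_{R_i(q)}=k_i(q)$, and then contradict Lemma~\ref{AutD8}. Your added justification for why the $\{1,k_i(q)\}$ dichotomy passes to the quotient $\overline{G}$ is reasonable extra care that the paper leaves implicit.
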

\begin{proof}
Assume that there exists $r_i(q)\in\pi(|\overline{G}|/|S|)$, where $i\in\{7,14\}$. Take a Hall $R_i(q)$-subgroup $H$ of $\overline{G}$. We have for every $h\in\overline{G}$, $(Ind_{\overline{G}}(h))_{R_i(q)}=1$ or $(Ind_{\overline{G}}( h))_{R_i(q)}=k_i(q)$. Using Lemma \ref{hz5}, we obtain $|H|$ divides $|S|$ or $|\overline{G}|/|S|$. Since $r_i(q)$ divides $|\overline{G}|/|S|$, we see that $k_i(q)$ divides $|\overline{G}|/|S|$; a contradiction with Lemma \ref{AutD8}.
\end{proof}

\begin{lem}\label{R8}
$R_8(q)\cap\pi(K)=\varnothing$.
\end{lem}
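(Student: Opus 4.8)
The plan is to run the argument of Lemma~\ref{R4} with the pair of ``large'' cyclotomic sets $R_3(q),R_6(q)$ replaced by $R_7(q),R_{14}(q)$ (the pair supplied by Lemma~\ref{good2}) and with $R_8(q)$ playing the role of $R_4(q)$. Here $R_8(q)$ consists of the primes $r$ with $e(r,q)=n=8$, so that $\varphi(r,L)=\eta(8)=4=n/2$ falls just outside the range $n/2<\varphi(r,L)\le n$ of Lemma~\ref{vas}; this is precisely the feature that makes $R_8(q)$ behave like $R_4(q)$ did for $D_4(q)$, and in particular one has $|L|_{R_8(q)}=k_8(q)^2$ (the factor $q^8-1$ carrying $\Phi_8(q)$ occurs twice in $|D_8(q)|$), a quantity of size roughly $q^8$, whereas $|L|_{R_7(q)\cup R_{14}(q)}=k_7(q)k_{14}(q)$ has size roughly $q^{12}$. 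Every prime in $R_7(q)\cup R_{14}(q)\cup R_8(q)$ exceeds $13$, so Lemmas~\ref{GorBig} and \ref{hz5} apply.

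First I would assume $R_8(q)\cap\pi(K)\neq\varnothing$, fix $r\in R_8(q)\cap\pi(K)$, and pass to $\widetilde G=G/O_{R_8(q)'}(K)$; since the image $\widetilde K$ of $K$ satisfies $O_{R_8(q)'}(\widetilde K)=1$, one can, as in Lemma~\ref{R4}, choose a minimal normal subgroup $R$ of $\widetilde G$ with $\pi(R)\cap R_8(q)\neq\varnothing$. Fix $h\in G$ with $\pi(|h|)\subseteq R_{14}(q)$; by Lemma~\ref{Aut2D8} its image $\overline{h}$ lies in $S$. If $R$ is non-solvable then, exactly as in Lemma~\ref{R4}, $R$ contains an $R_8(q)$-element $x$ whose index in $\widetilde G$ is a proper divisor of $|G|$ both at $R_7(q)$ and at $R_{14}(q)$; since $\pi(|x|)$ is disjoint from $\pi(O_{R_8(q)'}(K))$, Lemma~\ref{factorKh} yields $y\in G$ with $(Ind_G(y))_{R_7(q)\cup R_{14}(q)}=1$, contradicting $G\in\{r_7(q),r_{14}(q)\}^*$ together with Lemma~\ref{GorBig}. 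Hence $R$ is an elementary abelian $r$-group with $r\in R_8(q)$.

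It then remains to eliminate both possibilities for $C_R(h)$. If $C_R(h)>1$, take $x\in C_R(h)$ and argue, via the index lists of Lemma~\ref{Neda} (parts (i), (iii), (iv) specialised to $n=8$) together with the orders of the maximal tori of $D_8(q)$, that $(Ind_{\widetilde G}(hx))_{R_8(q)}=(Ind_{\widetilde G}(x))_{R_8(q)}$; this forces $h$, and hence every $G$-conjugate of $h$, to act trivially on $R$, so the minimal preimage $H$ of $S$ centralises $R$. Then either $|S|_{R_7(q)}>1$, which already produces an element of $G$ whose index is coprime to $R_7(q)\cup R_{14}(q)$ — impossible by Lemma~\ref{GorBig} — or $k_7(q)$ divides $|\overline{G}|/|S|$, contradicting Lemma~\ref{AutD8}. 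If $C_R(h)=1$, then $|R|$ divides $Ind_{\widetilde G}(h)$, and since the $r$-part of this index is at most $|L|_r$ we get $|R|\le|L|_{R_8(q)}=k_8(q)^2$. On the other hand $H$ acts faithfully on $R$, so $|R|-1\ge|H|\ge k_7(q)k_{14}(q)=|L|_{R_7(q)\cup R_{14}(q)}$; for $q>3$ this is incompatible with $|R|\le k_8(q)^2$, the desired contradiction.

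The step I expect to be the main obstacle is the $C_R(h)>1$ case: one must read off from Lemma~\ref{Neda} the exact behaviour of $Ind_L(x)$ at $r_8(q)$ under the hypothesis that its $R_{14}(q)$-part is $1$. This is delicate because $R_8(q)$, unlike $R_{14}(q)$, $R_7(q)$ and $R_{12}(q)$, is not an index for which Lemma~\ref{Neda} records an explicit list, so control at $r_8(q)$ has to be obtained indirectly from the torus structure of $D_8(q)$ — in particular from the non-adjacency of $r_8(q)$ and $r_{14}(q)$ in $GK(L)$, since a maximal torus carrying primitive prime divisors of both $q^8-1$ and $q^{14}-1$ would need rank at least $4+6>8$. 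Everything else is a routine transcription of the $D_4(q)$ argument together with degree-in-$q$ estimates.
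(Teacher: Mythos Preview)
Your proposal is correct and follows the paper's proof essentially line for line: same quotient $\widetilde G=G/O_{R_8(q)'}(K)$, same choice of an $R_{14}(q)$-element $h$, same elimination of a non-solvable minimal normal $R$ via Lemma~\ref{factorKh} and Lemma~\ref{GorBig}, same dichotomy on $C_R(h)$, and in the fixed-point-free case the same pair of bounds $|R|\le k_8(q)^2$ versus $|R|-1\ge|H|$ (the paper records $|H|\ge 4k_7(q)k_{14}(q)$, you write $k_7(q)k_{14}(q)$, which is immaterial). One small recalibration: the step you flag as the ``main obstacle'' is handled in the paper more directly than you anticipate --- rather than appealing to torus combinatorics, the paper simply invokes part~(i) of Lemma~\ref{Neda} (with $n=8$, so $r\in R_{14}(q)$) and reads the $R_8(q)$-part off each of the explicitly listed index formulas to obtain the implication ``$\alpha_{R_{14}(q)}=1\Rightarrow\alpha_{R_8(q)}=|L|_{R_8(q)}$''; no separate torus-rank argument is written out.
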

\begin{proof}
Assume that $R_8(q)\cap\pi(K)\neq\varnothing$. Let $\widetilde{\ }:G\rightarrow G/O_{R_8(q)'}(K)$ be a natural homomorphism, $R<\widetilde{G}$ be a minimal normal subgroup. Take $h\in G$ such that $\pi(|h|)\subseteq R_{14}(q)$. It follows from Lemma \ref{AutD8} that $\overline{h}\in S$, where $\overline{h}\in\overline{G}$ is the image of the element $h$. By the definition we have $|\pi(R)\cap R_8 (q)|\neq\varnothing$. If $R$ is not solvable, then it is easy to show that $R$ contains an $R_8(q)$-element $x$ such that $(Ind_{\widetilde{G}}(x))_{R_7(q)}<|G|_{R_7(q)}$ and $(Ind_{\widetilde{G}}(x))_{R_{14}(q)}<|G|_{R_{14}(q)}$. We have $\pi(|x|)\cap\pi(O_{R_8 (q)'}(K))=\varnothing$. It follows from Lemma \ref{factorKh} that $G$ contains $y$ such that $(Ind_G(y))_{R_7 (q)\cup R_{14}(q)}=1$; a contradiction with the fact that $G\in\{r_7(q), r_{14}(q)\}^*$ and Lemma \ref{GorBig}. Hence, $R$ is an elementary Abelian $r$-group, where $r\in R_8(q)$. Assume that $C_R(h)>1$. Let $x\in C_R(h)$. Then $(Ind_{\widetilde{G}}(x))_{R_{14}(q)}=1$. It follows from Lemma \ref{Neda} that for any $\alpha\in N(G)$ such that $\alpha_{R_{14}(q)}=1$, we have $\alpha_{R_8 (q)}=|L|_{R_8(q)}$. Therefor $(Ind_{\widetilde{G}}(hx))_{R_8(q)}=(Ind_{\widetilde{G}}(x))_{R_8(q)}$. Thus $C_{\widetilde{G}}(h)$ contains a some Sylow $r$-subgroup of $C_{\widetilde{G}}(x)$. In particular, $h$ acts trivially on $R$. Since $R$ is normal subgroup of $\widetilde{G} $, we obtain any element conjugate with $h$ acts trivially on $R$. In particular, the minimal preimage $H<\widetilde{G}$ of $S$ acts trivially on $R$. If $|S|_{R_7(q)}>1$, then $G$ contains an element $y$ such that $Ind_G(y)_{R_7(q)\cup R_{14}(q)}=1$; a contradiction. Thus, $|\overline{G}/S|$ is a multiple of $k_7(q) $; a contradiction with Lemma \ref{AutD8}.

Therefor $C_{R}(h)=1$. Thus, $|R|$ divides $Ind_{\widetilde{G}}(h)$. Since $(Ind_{\widetilde {G}}(h))_r\leq|L|_{r}$, we obtain $|R|\leq(k_8(q))^2$. The group $H$ acts faithfully on $R$. Hence $|R|-1\geq|H|$. From Lemma \ref{Aut2D8} we get that $|H|\geq 4k_7(q)k_{14}(q)>(k_8(q))^2$; a contradiction.
\end{proof}

\begin{lem}\label{D8Alt}
The group $S$ is not isomorphic to an alternating group.
\end{lem}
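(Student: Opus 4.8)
The plan is to mirror the structure of Lemma~\ref{D4Alt} as closely as possible, exploiting the facts already accumulated for the $D_8(q)$ case. Suppose, for contradiction, that $S\simeq Alt_m$ for some $m>5$. From Lemmas~\ref{factorD8} and~\ref{R8} we know that $|\overline{G}|$ is a multiple of $(k_8(q))^2k_7(q)k_{14}(q)$, and since $|Out(Alt_m)|\leq 4$ and $|Alt_m|$ is divisible by $12$ (for $m\geq 5$), it follows that $|S|$ is divisible by $12(k_8(q))^2k_7(q)k_{14}(q)$. Using Lemma~\ref{kqi} to rewrite the $k_i(q)$, this product equals (up to the small coprime-to-everything factors $(7,q\mp1)$ and $(2,q+1)$) a number of size comparable to $q^{16}\cdot q^6\cdot q^6=q^{28}$; the exact bookkeeping of the gcd-denominators is a routine exercise that I would defer, since those denominators only divide $12$ and can be absorbed.

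Next I would invoke the abelianity of the Hall $R_7(q)$- and $R_{14}(q)$-subgroups of $S$, which transfers from Lemma~\ref{HollD8R3R6} via Lemma~\ref{factorKh}. The crucial structural input is the Hall--Tomkinson-type classification of abelian Hall subgroups of alternating groups (Lemmas \cite{Hall} and \cite{Tom66}, as cited in the $D_4$ proof): an abelian Hall $\pi$-subgroup of $Alt_m$ forces $|\pi\cap\pi(Alt_m)|=1$. Applied to $\pi=R_7(q)$ and $\pi=R_{14}(q)$, this yields $|R_7(q)|=|R_{14}(q)|=1$, i.e.\ $k_7(q)$ and $k_{14}(q)$ are each a single prime. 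Since $k_7(q)$ and $k_{14}(q)$ are primitive prime divisors of $q^7-1$ and $q^{14}-1$ respectively, both exceed $m$'s would-be lower bound only if $m$ is large; more precisely, the presence of an element of order $k_7(q)$ (respectively $k_{14}(q)$) in $S=Alt_m$ forces $m\geq k_7(q)\geq (q^7-1)/((7,q-1)(q-1))$, so $m$ is bounded below by roughly $q^6$.

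With $m\gtrsim q^6$ I would then compare orders: $|Alt_m|=m!/2$ grows far faster than $|L|=|D_8(q)|$, whose $p$-part is $q^{56}$ and whose total order is bounded by $q^{120}$ or so. For $q\geq 4$ (the case $q\in\{2,3\}$ being handled separately, or indeed $q>3$ by hypothesis since the statement is about $D_8(q)$ with $q>3$ — if $q\leq 3$ is not excluded in the proposition I would add the remark that those cases follow from the literature as in the $D_4$ case), one checks $m\geq (q^7-1)/((7,q-1)(q-1))>q^5$ forces $|Alt_m|>|L|$. But $|\overline{G}|$ divides $|G|=|L|$ (using $|G|=|G||=|L||=|L|$, which holds here because $GK(L)$ is disconnected for $L=D_8(q)$, cf.\ the Introduction and \cite{Ch96}), so $|S|\leq|\overline{G}|\leq|L|$ — a contradiction. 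Hence $S$ is not an alternating group.

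The main obstacle I anticipate is not conceptual but the verification that the order inequality $|Alt_m|>|L|$ genuinely holds once $m$ is pinned down by the single-prime condition: one must make sure the lower bound on $m$ coming from ``$Alt_m$ contains an element of order $k_7(q)$'' is large enough relative to $\log|D_8(q)|\approx 56\log q + \text{lower order}$. Since even $m\approx q^5$ gives $\log(m!)\approx q^5\log q$, which dwarfs $56\log q$ for any $q\geq 2$, this is comfortable, but it does need to be stated cleanly. A secondary subtlety is the case $q=4$ (paralleling the $D_4$ argument), where $R_7(4)=\{127\}$ and $R_{14}(4)=\{43\}$ are honest single primes, so the single-prime conclusion is automatically consistent and one just needs $m\geq 127$, again forcing $|S|>|L|$; I would spell this out as an explicit small-case check exactly as done in Lemma~\ref{D4Alt}.
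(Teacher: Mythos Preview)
Your overall strategy matches the paper's: use the abelian Hall $R_7(q)$- and $R_{14}(q)$-subgroup (Lemma~\ref{HollD8R3R6}) together with the Hall--Thompson result to force $|R_i(q)\cap\pi(Alt_m)|=1$, deduce a large lower bound on $m$, and conclude $|S|>|L|$ for a contradiction. The paper's proof is considerably terser than yours --- it does not bother with the $(k_8(q))^2$ divisibility or a separate $q=4$ check --- but the skeleton is the same.

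There is, however, one genuine error in your justification of the final contradiction. You claim that $|G|=|L|$ ``because $GK(L)$ is disconnected for $L=D_8(q)$'', citing \cite{Ch96}. This is false in general: for $q>3$ the prime graph of $D_8(q)$ is connected (indeed, if it were disconnected, Chen's results in \cite{Ch96,Ch99} together with the work cited in the Introduction would already settle Thompson's conjecture for $D_8(q)$, and the present paper would have nothing to prove). What you actually need is only $|S|\leq|L|$, and this follows without any connectivity hypothesis: by Lemma~\ref{Order} applied to $S$ one has $|S|=|S||$, and by Lemma~\ref{factorKh}(ii) every class size in $S$ divides a class size in $G$, so $|S||$ divides $|G||=|L||=|L|$ (the last equality again by Lemma~\ref{Order}, applied to $L$). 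This is exactly the argument the paper already uses inside the proof of Lemma~\ref{AutD8}. Replace your disconnectedness appeal with this, and your proof goes through.
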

\begin{proof}
Assume that $S\simeq Alt_m$ for some $m>5$. Since $S$ is divided by $r_{14}(q)>13$, we get $m>13$. Therefor $|Out(S)|=2$.
From Lemma \ref {HollD8R3R6}, it follows that the Hall $R_7(q)$- and $R_{14}(q)$-subgroups are abelian. It follows from Lemmas \cite{Hall} and \cite{Tom66} that the Hall $\pi$-subgroup of $Alt_m$ is abelian only if $|\pi\cap\pi(Alt_m)|=1$. Thus, $m\geq k_i(q)$, where $i\in\{7, 14\} $. Hence $|S|>|L|$; a contradiction.
\end{proof}

\begin{lem}\label{Sporadic2}
The group $S$ is not isomorphic to any of the sporadic groups.
\end{lem}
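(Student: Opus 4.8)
The plan is to repeat for $L=D_8(q)$ the argument of Lemma~\ref{Sporadic}, with the cyclotomic data attached to the exponents $3,4,6$ replaced by those attached to $7,8,14$. So suppose, for contradiction, that $S$ is one of the $26$ sporadic simple groups. First I collect the divisibility input. By Lemma~\ref{factorD8} the order of $\overline{G}$ is a multiple of $k_7(q)k_{14}(q)$, and by Lemma~\ref{R8} no prime of $R_8(q)$ divides $|K|$; hence for every $r\in R_8(q)$ we have $|\overline{G}|_{r}=|G|_{r}\ge|G||_{r}=|L||_{r}=|D_8(q)|_{r}$, and since $r$ divides $\Phi_8(q)=q^{4}+1$ while $\Phi_8(q)$ divides both the standalone factor $q^{8}-1$ and the $i=4$ term of $\prod_{i=1}^{7}(q^{2i}-1)$ in the order formula of $D_8(q)$ (the denominator $(4,q^{8}-1)$ being a $2$-power coprime to $r$), we get $r^{2}\mid|D_8(q)|_{r}$, and therefore $r^{2}\mid|\overline{G}|$ for each $r\in R_8(q)$. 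On the other hand $S\le\overline{G}\le\Aut(S)$ forces $|\overline{G}|$ to divide $|\Aut(S)|=|S|\,|\Out(S)|$, and a sporadic group has $|\Out(S)|\le 2$.

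Next I would take a primitive prime divisor $r=r_8(q)$ of $q^{8}-1$, which exists by Zsigmondy's lemma (the pair $(q,8)$ is never exceptional); it satisfies $r\equiv1\pmod 8$, so $r\ge 17$ and in particular $r$ is odd. Then $r^{2}\mid|\overline{G}|$ together with $|\overline{G}|\mid 2|S|$ and the oddness of $r$ give $r^{2}\mid|S|$. This is impossible: inspecting the orders of the sporadic groups in \cite{Atlas} one sees that the square of a prime greater than $13$ never divides the order of a sporadic simple group, every prime $\ge 17$ occurring to the first power in each of the $26$ orders. This contradiction proves the lemma. If one prefers to stay closer to the wording of Lemma~\ref{Sporadic}, the divisibility $k_7(q)k_{14}(q)\mid|\overline{G}|$ already shows that $|S|$ is divisible by the two distinct primes $r_7(q)$ and $r_{14}(q)$, both $\ge 29$; the only sporadic groups with two distinct prime divisors $\ge 29$ are $J_4$, $Ly$, $B$, $M$, and for each of these the $r_8(q)^{2}$-argument above (or a direct comparison of $|S|$ with $|L|$) produces the contradiction.

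The one delicate point — and where I expect to have to be careful — is the assertion that $r^{2}\mid|\overline{G}|$ for $r=r_8(q)$. It rests on the observation that every odd prime divisor of $\Phi_8(q)=q^{4}+1$ is in fact a primitive prime divisor of $q^{8}-1$ (so it genuinely lies in $R_8(q)$ and Lemma~\ref{R8} applies to it), on the identity $|G||=|L||=|L|$ supplied by Lemma~\ref{Order}, and on the elementary fact recorded in the preliminaries that $|G||_{r}$ divides $|G|_{r}$. Granting this, everything else is a finite inspection of the list of sporadic orders, so no genuine obstacle remains.
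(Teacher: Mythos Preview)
Your argument is correct and follows essentially the same route as the paper: the paper's proof is the one-line observation that $k_8(q)^{2}k_7(q)k_{14}(q)$ divides $|\overline{G}|$, that $|\overline{G}|$ divides $|L|$, and that no sporadic group order in \cite{Atlas} can accommodate this. Your version simply unpacks the $k_8(q)^{2}$ part of this divisibility (via the single prime $r_8(q)\ge 17$ appearing to the second power) and makes explicit the Atlas check that no prime $\ge 17$ occurs squared in a sporadic order, which is exactly the mechanism behind the paper's terse claim.
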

\begin{proof}
The assertion of the lemma follows from the fact that $k^2_8(q)k_7(q)k_{14}(q)$ divides $|\overline{G}|$, $|\overline{G}|$ divides $|L|$ and \cite{Atlas}.
\end{proof}

\begin{lem}\label{MnogoR}
$\pi(K)\subseteq\{p,2\}\cup R_4(q)$.
\end{lem}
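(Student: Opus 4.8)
The plan is to prove $\pi(K)\cap R_m(q)=\varnothing$ for every $m\in\{1,2,3,5,6,10,12\}$. Granting this, the lemma follows: by the construction of $K$ in Lemma~\ref{factorD8} (where $K=O_{(R_7(q)\cup R_{14}(q))'}(G)$, so $\pi(K)\cap(R_7(q)\cup R_{14}(q))=\varnothing$), by Lemma~\ref{R8}, by the fact that every odd prime of $L$ other than $p$ lies in $R_j(q)$ for some $j\in\{1,2,3,4,5,6,7,8,10,12,14\}$, and since $\pi(K)\subseteq\pi(G)=\pi(N(G))=\pi(N(L))=\pi(L)$ because $G$ has trivial center. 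So fix such an $m$, suppose $r\in\pi(K)\cap R_m(q)$ (note $r$ is odd, and $r>5$ when $m\notin\{1,2\}$), put $\widehat{G}=G/O_{r'}(K)$ and $\widehat{K}=K/O_{r'}(K)\neq1$ (the subgroup $O_{r'}(K)$ being normal in $G$), and choose a minimal normal subgroup $V$ of $\widehat{G}$ contained in $\widehat{K}$. Since $O_{r'}(\widehat{K})=1$, $V$ is either an elementary abelian $r$-group, or a direct product of isomorphic non-abelian simple groups each of order divisible by $r$.

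The non-solvable alternative is ruled out as in the corresponding steps of Lemmas~\ref{factorD8} and \ref{R8}: using $k_7(q)k_{14}(q)\mid|\overline{G}|$ and that $\pi(V)\subseteq\pi(K)$ avoids $R_7(q)\cup R_{14}(q)\cup R_8(q)$, one exhibits inside $V$ — together with a suitable $R_7(q)$- or $R_{14}(q)$-element of $\widehat{G}$ and Lemma~\ref{factorKh} — an element whose class size forces a member of $N(G)=N(L)$ whose $\{r_7(q),r_{14}(q)\}$-part lies strictly between $1$ and $|G|_{\{r_7(q),r_{14}(q)\}}$, against $G\in\{r_7(q),r_{14}(q)\}^*$ and Lemma~\ref{GorBig}. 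Hence $V$ is an elementary abelian $r$-group.

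It then remains to rerun the Gruenberg--Kegel argument of Lemma~\ref{R1R2}. By Lemmas~\ref{hz2} and \ref{hz4} (after disposing, by an order count, of the case $S\simeq A_1(t^a)$ with $t\in R_7(q)\cup R_{14}(q)$), a Sylow $t$-subgroup of $\overline{G}$ is cyclic for each $t\in R_7(q)\cup R_{14}(q)$, so by Lemma~\ref{HollD8R3R6} the Hall $R_7(q)$- and $R_{14}(q)$-subgroups of $\overline{G}$, hence of $\widehat{G}$, are cyclic. For the given $m$, Lemma~\ref{Neda}(i),(iii),(iv) together with Lemma~\ref{kqi} determine a companion index $i\in\{7,14\}$ and a bound $B=B(q,m)$ such that $(Ind_L(y))_r\leq B$ whenever $y\in L$ and $(Ind_L(y))_{R_i(q)}<|L|_{R_i(q)}$. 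Let $H$ be a cyclic Hall $R_i(q)$-subgroup of $\widehat{G}$ and $h$ a generator; then $(Ind_{\widehat{G}}(h))_{R_i(q)}=1$, so for a preimage $h'$ of $h$ in $G$ of order $|h|$ the index $\alpha:=Ind_G(h')$ lies in $N(L)$, satisfies $\alpha_{R_i(q)}=1$, and hence $\alpha_r\leq B$. On the other hand, Lemma~\ref{Gore5hzOcomutantePstavtomor} yields $V=C_V(H)\times[V,H]$ with $H$ acting faithfully on $[V,H]$ (as in Lemma~\ref{R1R2}, no non-trivial element of $H$ centralizes $V$), so $|[V,H]|>|H|=|\widehat{G}|_{R_i(q)}=|L|_{R_i(q)}=k_i(q)$; and $|[V,H]|=[V:C_V(H)]$ divides $Ind_{\widehat{G}}(h)$, hence $\alpha$, and being an $r$-power it divides $\alpha_r\leq B$. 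Thus $k_i(q)<|[V,H]|\leq B$, contradicting the inequality $B\leq k_i(q)$ extracted from Lemma~\ref{Neda}.

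The crux, and the only laborious part, is that final comparison: for each of the seven residues $m$ one must verify that the $r$-part of every index appearing in Lemma~\ref{Neda}(i),(iii),(iv) for $L=D_8(q)$ with reduced $R_i(q)$-part does not exceed $k_i(q)$, and in particular that the companion $i\in\{7,14\}$ has been chosen correctly (roughly, $i=7$ for residues attached to $q^7-1$ and $i=14$ for those attached to $q^7+1$). Since these witnesses say nothing about the $2$-part of class sizes, and since $2$ and $p$ lie in no $R_m(q)$ while $m=4$ has been left out on purpose, the bound $\{p,2\}\cup R_4(q)$ is the best this method gives. The reduction to $V$ elementary abelian is routine.
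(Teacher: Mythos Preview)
Your proof is correct and follows essentially the same strategy as the paper's: pass to $\widehat G=G/O_{r'}(K)$, reduce a minimal normal $r$-subgroup to the elementary abelian case, take a cyclic Hall $R_i(q)$-subgroup $H=\langle h\rangle$, and contrast the lower bound $|[V,H]|>|H|=k_i(q)$ (from faithful action of the cyclic group $H$) with an upper bound on the $r$-part of $Ind_G(h')$. The only difference is in bookkeeping: the paper fixes the companion index as $i=14$ when $j=2$ and $i=7$ otherwise, and for $j\in\{3,5,6,10,12\}$ it bypasses Lemma~\ref{Neda} altogether via the crude estimate $(Ind_{G/R}(h))_{r_j(q)}\le|L|_{r_j(q)}<k_7(q)$, invoking the centralizer descriptions (yielding the bound $(k_j(q))^6$) only for $j\in\{1,2\}$, where $|L|_{r_j(q)}$ can exceed $k_i(q)$.
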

\begin{proof}
Assume that $r_j(q)\in\pi(K)$, where $j\neq 4$ and $r_j(q)\neq2$. From Lemmas \ref{factorD8} and \ref{R8} it follows that $j\not\in\{7,8,14\}$. Let $R=O_{r_j'}(G)$, $X\unlhd K/R$ be the minimal normal subgroup. Similarly, as in the Lemma \ref{AutD4}, we can show that $X$ is an elementary abelian group. Let $i=14$ if $j=2$ in the another case $i=7$. It follows from Lemmas \ref{hz2} and \ref{hz4} that every $s$-subgroup of $\overline{G}$ is cyclic, where $t\in R_7(q) \cup R_{14}(q)$. It follows from Lemma \ref{HollD8R3R6} that the Hall $R_7(q)$- and $R_{14}(q)$-subgroups of $\overline{G}$ are cyclic. Let $H\in Hall_{R_i(q)}(G/R)$, and $h\in H$ be the generating element. Then $h$ acts regularly on $[X,H]$. Hence $(Ind_{G/R}(h))_{r_j(q)}>|[X,H]|\geq k_i(q)+ 1$. if $j\in\{3,6,5,10,12\}$ then $k_i(q)>|L|_{r_l(q)}$; a contradiction.
If $j\in\{1, 2\}$ and $r_j(q)\neq\{2 \}$, then $(Ind_{G/R}(h))_{r_j(q)}\leq(k_j(q))^6<k_i(q) $; a contradiction.
\end{proof}

\begin{lem}\label{LieType8}
The $S$ is a group of Lie type over the field of characteristic $p$.
\end{lem}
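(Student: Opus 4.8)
The plan is to adapt the proof of Lemma~\ref{LieType} to $L=D_8(q)$, with the pair of residue classes $R_7(q),R_{14}(q)$ playing the role of $R_3(q),R_6(q)$, with $|L|_p=q^{56}$ in place of $q^{12}$, and with Lemma~\ref{MnogoR} (which only yields $\pi(K)\subseteq\{p,2\}\cup R_4(q)$) in place of the inclusion $\pi(K)\subseteq\{p\}$ available in the $D_4$ case. By Lemmas~\ref{D8Alt} and \ref{Sporadic2} the group $S$ is of Lie type; let $t$ be the characteristic of its field of definition, and suppose $t\neq p$. The first step is a lower bound for $|\overline{G}|$. Since $\pi(K)\subseteq\{p,2\}\cup R_4(q)$, every prime $r\notin\{p,2\}\cup R_4(q)$ satisfies $|\overline{G}|_r=|G|_r$; and by Lemma~\ref{Order} applied to $L$ together with $N(G)=N(L)$ one has $|L|_r=|L||_r=|G||_r$ with $|G||_r$ a divisor of $|G|_r$, so $|L|_r$ divides $|\overline{G}|_r$. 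Hence $|\overline{G}|$ is divisible by $k_3(q)^2k_5(q)k_6(q)^2k_7(q)k_8(q)^2k_{10}(q)k_{12}(q)k_{14}(q)$, a number exceeding $q^{40}/c$ for an absolute constant $c$.

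Next I would run through the possibilities for the residue class of $t$. If $t\in R_i(q)$ for some $i\in\{3,4,5,6,7,8,10,12,14\}$, then $\Phi_i$ divides $|D_8(q)|$ with multiplicity at most four, so $|S|_t$ divides $|L|_t<2q^8$; by Lemma~\ref{Lieq3Aut} this gives $|\overline{G}|\le|Aut(S)|<(|S|_t)^{3.5}<q^{29}$, which contradicts the lower bound above (with finitely many small $q$, checked directly, possibly excepted). The remaining cases are $t=2$ and $t\in R_1(q)\cup R_2(q)$, where the crude bound $|S|_t\le|L|_t$ is useless because $\Phi_1$ and $\Phi_2$ divide $|D_8(q)|$ with multiplicity eight; here I would imitate the corresponding step of Lemma~\ref{LieType}. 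Choose $r\in R_7(q)\cap\pi(S)$ if $t\in R_1(q)$ or if $t=2$ and $q\equiv1\pmod4$, and $r\in R_{14}(q)\cap\pi(S)$ otherwise; such an $r$ exists since $k_7(q)k_{14}(q)$ divides $|S|$ by Lemmas~\ref{AutD8} and \ref{Aut2D8}. By Lemma~\ref{pchast} applied to $S$ there is $h\in S$ with $(Ind_S(h))_t=|S|_t$ and $(Ind_S(h))_r<|S|_r$; the number $Ind_S(h)$ divides some $\alpha\in N(\overline{G})\subseteq N(L)$, and since $G\in\{r_7(q),r_{14}(q)\}^*$, Lemma~\ref{GorBig} forces $(Ind_S(h))_r=1$. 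Writing $\alpha=Ind_L(x)$, the $r$-part of $\alpha$ is thus not maximal, so by Lemma~\ref{Neda}(iii) (when $r\in R_7(q)$) or Lemma~\ref{Neda}(i) (when $r\in R_{14}(q)$) the index $\alpha$ equals one of the quotients listed there. Computing the $t$-part of each of them — the largest contribution coming from the branch in which the quotient is by $(q^{7}\mp1)(q\mp1)$ — one gets $|S|_t=(Ind_S(h))_t\le c'(q\mp1)_t^{6}<c'q^6$, so that $|Aut(S)|<(c'q^6)^{3.5}<q^{25}$ for all but finitely many $q$; again a contradiction, with the small exceptions checked by hand. Therefore $t=p$.

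The step I expect to cost the most work is the case $t\in\{2\}\cup R_1(q)\cup R_2(q)$: one must verify, uniformly in $q$, that the $(q-1)$-part (respectively the $(q+1)$-part, or, when $t=2$, the $2$-part) of every index value in Lemma~\ref{Neda}(i),(iii) is bounded by an absolute constant times a fixed power of $(q\mp1)_t$ — all branches other than the one with $m=n-1=7$ contribute only a bounded factor, and the exceptional branch contributes $(q\mp1)_t^{6}$, rather than the $(q\mp1)_t^{2}$ that occurs for $D_4$. Since $6\cdot3.5=21<40$ this is still more than enough to contradict Lemma~\ref{Lieq3Aut}, but it is exactly the place where the larger rank forces one to use the explicit centralizer indices of Lemma~\ref{Neda} rather than the naive estimate $|S|_t\le|L|_t$, and where a short list of small values of $q$ must be dealt with separately.
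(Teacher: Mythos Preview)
Your proof is correct and rests on the same idea as the paper's --- rule out alternating and sporadic via Lemmas~\ref{D8Alt}, \ref{Sporadic2}, then compare an upper bound on $|\overline{G}|$ coming from Lemma~\ref{Lieq3Aut} with a lower bound coming from $\pi(K)\subseteq\{p,2\}\cup R_4(q)$ --- but it is considerably more elaborate. The paper's own proof is a single sentence: it records that $S$ is of Lie type over a field of order $t^m$, asserts the bound $|\overline{G}|<|S|_t^{3}\,t^{m}$, and says the conclusion follows from this together with Lemmas~\ref{factorD8}, \ref{R8}, \ref{MnogoR}. In other words, the paper does \emph{not} repeat the case analysis of the $D_4$ proof (Lemma~\ref{LieType}); it treats the crude comparison as sufficient in one stroke.

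Your refinement for $t\in\{2\}\cup R_1(q)\cup R_2(q)$, invoking Lemmas~\ref{pchast} and~\ref{Neda} exactly as in the $D_4$ case, is valid but heavier than needed. Your claim that ``the crude bound $|S|_t\le |L|_t$ is useless'' there is over-cautious: for a \emph{single} prime $t$ one has $(q\pm1)_t\le q+1$, so even at multiplicity eight $|L|_t\le C(q+1)^8<C'q^{8}$, whence $|S|_t^{3.5}<C''q^{28}$, already far below your lower bound $\sim q^{40}$ (indeed the true lower bound, including the odd parts of $k_1,k_2$, is $\sim q^{48}$). Thus, modulo bookkeeping of absolute constants and a finite check for small $q$, the crude bound does work uniformly --- which is presumably what the paper's one-line argument intends. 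Your Neda-based step sharpens the exponent from roughly $28$ to roughly $21$, a pleasant but unnecessary improvement.
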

\begin{proof}
From Lemmas \ref{D8Alt} and \ref{Sporadic2} it follows that $S$ is a group of Lie type over a field of order $t^m$ for some prime $t$. The assertion of the lemma follows from the fact that $|\overline{G}|<|S|^3_tt^m$ and Lemmas \ref{factorD8}, \ref{R8}, \ref{MnogoR}.
\end{proof}
\begin{lem}\label{Liner8}
$S\simeq D_8(q)$.
\end{lem}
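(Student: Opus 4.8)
The argument should parallel Lemma \ref{Liner} for $D_4(q)$ closely. From Lemmas \ref{factorD8}, \ref{R8}, and \ref{MnogoR} we know $\pi(K)\subseteq\{p,2\}\cup R_4(q)$, so $|\overline{G}|$ is divisible by $|L|$ divided only by a $\{p,2\}\cup R_4(q)$-number; in particular every $R_i(p)$ with $i\notin\{$relevant small exceptions$\}$ that divides $|L|$ also divides $|\overline{G}|$, and $|S|_p\leq|L|_p=q^{56}=p^{56n}$. By Lemma \ref{LieType8}, $S=S(t^l)$ is of Lie type in characteristic $t=p$. By Lemmas \ref{hz2} and \ref{hz4}, the Sylow $r$-subgroups of $\overline{G}$ are cyclic for $r\in R_7(q)\cup R_{14}(q)$, and by Lemma \ref{HollD8R3R6} the Hall $R_7(q)$- and $R_{14}(q)$-subgroups of $\overline{G}$ are cyclic; with Lemma \ref{AutD8} this forces $k_7(q)k_{14}(q)$ to divide $|S|$, so $S$ contains maximal tori of orders divisible by $k_7(q)$ and by $k_{14}(q)$.

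Next I would pin down the "top" cyclotomic value. Let $a$ be the largest integer with $\pi(S)\cap R_a(p)\neq\varnothing$. Since $\pi(S)\subseteq\pi(L)$ and $r_{14}(q)\in\pi(S)$, and $r_{14}(q)$ is a primitive prime divisor of $q^{14}-1=p^{14n}-1$, we get $14n\mid$ (the relevant order), so $a=14n$; this is the crucial rigidity point, exactly as $a=6n$ in the $D_4$ case. Then $|S|_p\leq|L|_p=p^{28n}=p^{2a}$. Now I run through the list of Lie types for $S$ exactly as in Lemma \ref{Liner}: for each family I express $a$ (the largest $b$ with $R_b(p)\subseteq\pi(S)$) and $|S|_p$ in terms of the rank $m$ and field exponent $l$, use $a=14n$ and $|S|_p\leq p^{2a}$ to bound the rank, then use $|S|_{p'}\leq|L|_{p'}\leq|Aut(S)|_{p'}$ together with Lemma \ref{Lieq3Aut} to kill all cases except $S\simeq D_m(p^l)$ with $m\le 8$; the constraint $a=2l(m-1)=14n$ together with $|S|_p=p^{lm(m-1)}\le p^{28n}$ forces $lm(m-1)\le 28n$ and $l(m-1)=7n$, giving $m=8$, $l=n$, hence $S\simeq D_8(q)=L$. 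The small-characteristic/small-field corner cases (e.g. $q=4$, or $^2B_2$, $^2G_2$, $^3D_4$, $^2F_4$, the various $A_m^\varepsilon$, $B_m$, $C_m$, $^2D_m$, and the exceptional $E_\ell$ families) are disposed of exactly as in Lemma \ref{Liner}: either $|S|_p>|L|_p$ outright, or a primitive prime divisor appears in $\pi(S)\setminus\pi(L)$, or an order comparison $|Aut(S)|_{p'}<q^{\mathrm{const}}\le|L|_{p'}$ fails.

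The main obstacle is the bookkeeping in the case analysis: because $D_8$ has rank $8$ (versus $4$ for $D_4$), more families survive the crude rank bound $m\le 8$ and one must check the $p'$-order comparison carefully for each, keeping track of the outer automorphism contribution via Lemma \ref{Lieq3Aut}. In particular one must be careful that for $S\simeq D_8(p^l)$ with $l>n$ impossible and $l<n$ excluded by $a=14n$, and that $^2D_8(p^l)$, $B_8(p^l)$, $C_8(p^l)$, $A_7^\varepsilon(p^l)$ etc.\ all genuinely fail the $p'$-order bound; this is routine but must be done family by family. I would also double-check that $r_{14}(q)>13$ indeed rules out sporadic and alternating $S$ (already done in Lemmas \ref{D8Alt}, \ref{Sporadic2}) so that only the Lie-type list needs to be traversed here.
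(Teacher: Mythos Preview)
Your overall strategy matches the paper's, but there is a numerical slip that cascades into a real gap. For $L=D_8(q)$ one has $|L|_p=q^{8\cdot 7}=q^{56}=p^{56n}$, so with $a=14n$ the correct bound is $|S|_p\le p^{4a}$, not $p^{2a}$ (you transplanted the $D_4$ exponent $2a$ too literally). With the correct bound, the inequality $|S|_p=p^{am/2}\le p^{4a}$ for $S\simeq D_m(p^l)$ only yields $m\le 8$, and together with $a=2l(m-1)=14n$ this does \emph{not} by itself force $m=8$, $l=n$; the values $m\in\{4,5,6,7\}$ survive and must be eliminated separately. The paper does this in two different ways: for $m\in\{5,7\}$ it checks that such an $S$ fails the $\{r_7(q),r_{14}(q)\}^*$ condition (via Lemma \ref{Neda}), and for $m\in\{4,6\}$ it uses an order comparison. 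Your plan of relying solely on a $p'$-order comparison is not enough here.

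Two further points. First, since Lemma \ref{MnogoR} only gives $\pi(K)\subseteq\{p,2\}\cup R_4(q)$, the divisibility you can use is $|L|_{(\{p,2\}\cup R_4(q))'}\mid |\overline{G}|$, not $|L|_{p'}\mid |\overline{G}|$; the order comparisons must be taken at the $(\{p,2\}\cup R_4(q))'$-part, as the paper does. Second, for $S\simeq A_m(p^l)$ the paper does not use an order comparison at all: it observes that the relevant torus of order $(q^{14}-1)/(m+1,p^l-1)$ produces an element of order $r_7(q)\,r_{14}(q)$ in $S$, contradicting $G\in\{r_7(q),r_{14}(q)\}^*$. You should expect to need this kind of spectrum argument, not just size bounds, in several of the families once the correct (weaker) bound $m\le 8$ is in place.
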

\begin{proof}
It follows from Lemmas \ref{factorD8}, \ref{R8}, and \ref{MnogoR} that $|\overline{G}|$ is a multiple of $|L|_{(\{p,2\}\cup R_4(q))'}$. Lemma \ref{LieType8} implies that $S$ is a group of Lie type over the field of characteristic $p$. From Lemmas \ref{hz2} and \ref{hz4}, it follows that any Sylow $t$-subgroup of $\overline{G}$ is cyclic, where $t\in R_7(q)\cup R_{14}(q)$. It follows from Lemma \ref{HollD8R3R6} that the Hall $R_7(q)$-subgroup and $R_{14}(q)$-subgroup of $\overline{G}$ are cyclic. It follows from Lemma \ref{Aut2D8} that $k_7(q)k_{14}(q)$ divides $|S|$. Hence $k_i(q)$ divides the order of some maximal torus, where $i\in\{7, 14\}$. Let $a$ be a maximal number such that the set $\pi(S)\cap R_{a}(p)$ is not empty. From Lemma \ref{factorD4} and the fact that $\pi(S)\leq \pi(L)$ it follows that $a=14n$. We have $|S|_p\leq|L|_p=p^{4a}$.

Assume that $S\simeq A_m(p^l)$. We have $a=l(m+1)$, $|S|_p=p^{lm(m+1)/2}=p^{am/2}\leq p^{4a}$. Thus, $m\leq8$ and $l=(14n-1)/m$. Thus $S$ contains a torus of order $(q^{14}-1)/(m+1, p^l-1)$. In particular $S$ contains an element of order $r_{n7}(p)r_{n14}(p)$; a contradiction with Lemma \ref{good2}.

Assume that $S\simeq\ ^2A_m(p^l)$. We have $a=2l(m+1)$ if $m$ is even and $a=2lm$ if $m$ is odd, $|S|_p=p^{lm(m+1)/2}$. Thus, $m\leq8$ and $l=3n/m$ if $m$ is even and $l=3n/(m + 1) $ if $ m $ is odd. It is easy to obtain a contradiction from the fact that $|Aut(S)|_{p'}>|L|_{(\{p,2\}\cup R_4(q))'}$.

Assume that $S\simeq B_m(p^l)$ or $S\simeq B_m(p^l)$. We have $a=2lm$, $|S|_p=p^{lm^2}=p^{am/2}\leq p^{2a}$. Thus, $m\leq4$ and $l=3n/m$. From the fact that $|S|_{p'}\leq |L|_{(\{2, p\}\cup R_4(q))'}\leq|Aut(S)|_{p'}$, it is easy to obtain a contradiction.

Assume that $S\simeq\ ^2B_2(p^l)$. We have $l\leq a<4l$, $|S|_p=p^{2l}$. Hence $14n\leq l<7n/2$. Thus, $|Aut(S)|_{p'}<q(q-1)(q^2+1)$; a contradiction with the fact that $q^{12}<|L|_{(\{2,p\}\cup R_4(q))'}\leq|Aut(S)|_{p'}$.

Similarly show that $S$ can not be isomorphic to$\ ^3D_4(p^l), G_2(p^l),\ ^2G_2(p^l), F_4(p^l),\ ^2F_4(p^l)$.

Assume that $S\simeq D_m(p^l)$. We have $a=2l(m-1)$, $|S|_p=p^{lm(m-1)}=p^{am/2}\leq p^{4a}$. Thus, $m\leq8$. If $m=8$ then $l=n$ and consequently $S\simeq L$. Assume that $m<8$. If $m=7$ or $m=5$ then from Lemma \ref{Neda} we yet that $S\not\in\{r_7(q),r_{14}(q)\}^*$. Therefor $G\not\in \{r_7(q),r_{14}(q)\}^*$, a contradiction with Lemma \ref{good2}. If $m=6$ or $m=4$ then $|Aut(S)|_{(\{2,p\}\cup R_4(q))'}<|L|_{(\{2,p\}\cup R_4(q))'}$, a contradiction.

Assume that $S\simeq\ ^2D_m(p^l)$. We have $a=2lm$, $|S|_p=p^{lm(m-1)}=p^{a(m-1)/2} \leq p^{4a}$. Thus, $m\leq9$. From the facts that $|S|\leq |L|$ and $|L|_{(\{2,p\}\cup R_4(q))'}\leq|Aut(S)|_{(\{2,p\}\cup R_4(q))'}$, it is easy to get a contradiction.

Assume that $S\simeq\ ^{\varepsilon}E_{\alpha}(p^l)$, where $\alpha \in \{6,7,8\}$, if $\alpha=6$ then $\varepsilon \in \{+,-\}$ otherwise $\alpha$ is the empty symbol. In this case it is easy to show that $|S|_p>|L|_p$; a contradiction.
\end{proof}

\begin{lem}
$G\simeq L$
\end{lem}
\begin{proof}
From Lemma \ref{Liner8} it follows that $S\simeq L$. Assume that $K$ is not trivial. From Lemmas \ref{factorD8}, \ref{R8}, and \ref{MnogoR} it follows that $K$ is a $\{2,p\}\cup R_4(q)$-group. Take $h\in S$ is an element of order $k_{14}(q)$. Since $r_{14}(q)$ is not adjacent to $p$ in the graph $GK(S)$, we obtain $(Ind_S(h))_p=|S|_p=|L|_p$. Assume that $p\in\pi(K)$. Let $\widetilde{G}=G/O_{p'}(G)$, $h'\in \widetilde{G}$ be the preimage of the element $h$. If $h'$ acts non trivially on $O_p(\widetilde{G})$ then $(Ind_{\widetilde{G}}(h'))_p>|L|_p$; a contradiction. Thus $h'$ acts trivially on $O_p(\widetilde{G})$. Hence any element conjugate to $h'$ acts trivially on $O_p(\widetilde{G})$. Since $S$ is a simple group, the minimal preimage $H<\widetilde{G}$ of $S$ is contained in $C_G(O_p(\widetilde{G}))$. Hence for any $x\in O_p(\widetilde{G})$ we have $(Ind_{\widetilde{G}}(x))_{R_3(q)\cup R_6(q)}=1$. Since $p$ is not divisible $O_{p'}(G)$, we obtain that $G$ contain $x'$ such that $(Ind_{G}(x'))_{R_3(q)\cup R_6(q)}=1$; a contradiction with Lemma \ref{GorBig}. Thus $p\not\in\pi(K)$. Similarly, we can show that $K$ is trivial.

From Lemma \ref{Order} we obtain $S=G$.
\end{proof}

\end{document}